\documentclass[a4paper, reqno]{amsart}

\title[]{The Kazhdan-Lusztig category of $\W$-algebras of simply-laced Lie algebras at irrational levels}

\author{Thomas Creutzig}
\address{Friedrich-Alexander-Universität Erlangen-Nürnberg}
\email{thomas.creutzig@fau.de}
\author{Gurbir Dhillon}
\address{University of California, Los Angeles}
\email{gsd@math.ucla.edu}

\author{Shigenori Nakatsuka}
\address{Friedrich-Alexander-Universität Erlangen-Nürnberg}
\email{shigenori.nakatsuka.2022@gmail.com}

\usepackage{latexsym}
\usepackage{amsmath}
\usepackage{amsfonts}
\usepackage{amssymb}
\usepackage{amsxtra}
\usepackage{mathrsfs}
\usepackage{eucal}
\usepackage[all]{xy}
\usepackage{color}
\usepackage{braket}
\usepackage{graphics, setspace}
\usepackage{etoolbox, textcomp}
\usepackage{comment}
\usepackage{changepage}
\usepackage{xcolor}
\definecolor{rouge}{rgb}{0.85,0.1,.4}
\definecolor{bleu}{rgb}{0.1,0.2,0.9}
\definecolor{violet}{rgb}{0.7,0,0.8}
\usepackage[colorlinks=true,linkcolor=bleu,urlcolor=violet,citecolor=rouge]{hyperref}
\usepackage{cleveref}
\usepackage{lscape}
\usepackage{caption}
\usepackage{subcaption}
\usepackage{enumitem}
\usepackage[colorinlistoftodos]{todonotes}

\usepackage{tikz}			%dynkin-diagrams
\usetikzlibrary{matrix}
\usetikzlibrary{automata, arrows.meta, positioning,cd}
\usepackage{dynkin-diagrams}
\usetikzlibrary{cd}			%commutative-diagrams

\tikzset{%
  symbol/.style={
    draw=none,
    every to/.append style={
      edge node={node [sloped, allow upside down, auto=false]{$#1$}}
    },
  },
}

\newtheorem{definition}{Definition}[section]
\newtheorem{proposition}[definition]{Proposition}
\newtheorem{theorem}[definition]{Theorem}
\newtheorem{corollary}[definition]{Corollary}

\newtheorem{remark}[definition]{Remark}

\newtheorem{ex}[definition]{Example}

\numberwithin{equation}{section}

\newcommand{\tmath}[1]{\texorpdfstring{#1}{ABC}}
\newcommand{\Z}{\mathbb{Z}}
\newcommand{\Q}{\mathbb{Q}}
\newcommand{\C}{\mathbb{C}}

\newcommand{\W}{\mathcal{W}}

\newcommand{\g}{\mathfrak{g}}
\newcommand{\h}{\mathfrak{h}}

\newcommand{\nil}{\mathfrak{n}}

\newcommand{\sll}{\mathfrak{sl}}

\newcommand{\Hom}{\operatorname{Hom}}

\newcommand{\Ind}{\operatorname{Ind}}

\newcommand{\cC}{\mathcal C}
\newcommand{\cA}{\mathcal A}
\newcommand{\cR}{\mathcal R}
\newcommand{\cF}{\mathcal F}
\newcommand{\cG}{\mathcal G}

\newcommand{\cM}{\mathcal M}

\newcommand{\btimes}{\boxtimes}
\newcommand{\one}{\mathbf 1}
\newcommand{\Id}{\mathrm{Id}}

\renewcommand{\mod}{\operatorname{-mod}}

\newcommand{\fg}{\mathfrak{g}}
\newcommand{\gk}{\widehat{\fg}_\kappa}

\newcommand{\sC}{\mathcal{C}}
\newcommand{\Com}{\mathrm{Com}}
\newcommand{\kk}{\kappa}
\newcommand{\bM}{\mathbb{M}}
\newcommand{\io}{\mathcal{Y}(\cdot,z)}
\newcommand{\pr}{\mathrm{pr}}
\newcommand{\vect}{\mathrm{Vect}}

\newcommand{\KL}{\mathrm{KL}}
\newcommand{\weyl}{\mathbb{V}}

\newcommand{\Mod}{\text{-}\mathrm{mod}}
\newcommand{\cdo}[2]{\mathcal{D}^{\mathrm{ch,#2}}_{#1}}

\newcommand\doi[2]{\href{http://dx.doi.org/#1}{#2}}

\begin{document}
\maketitle
\begin{abstract}
    Let $\g$ be a simple, simply-laced Lie algebra and $f \in \g$ nilpotent. 
    The Kazhdan-Lusztig category of the $\W$-algebra $\W^\kappa(\g,f)$ associated with $(\g,f)$ at level $\kk \in \C$ is obtained from the Kazhdan-Lusztig category of the affine vertex algebra $V^\kk(\g)$ via quantum Hamiltonian reduction associated with $f$. We show that this is a braided tensor equivalence for any $f$  and any irrational level $\kk\in \C \backslash \Q$. 
\end{abstract}

\section{Introduction}

Let $\g$ be a simple Lie algebra and $\kappa$ a complex number. Then to this data one associates the affine vertex algebra of $\g$ at level $\kappa$, $V^\kappa(\g)$. Modules for $V^\kappa(\g)$ are smooth modules of the affine Lie algebra of $\g$, $\hat\g$, at level $\kappa$. In particular any $\g$-module parabolically induces to a $V^\kappa(\g)$-module. The Kazhdan-Lusztig category at level $\kappa$, $\KL^\kappa(\g)$, is the category of smooth $\g$-modules at level $\kappa$ whose simple composition factors are the simple quotients of the parabolic inductions of finite-dimensional $\g$-modules. The famous result of Kazhdan-Lusztig is that this category for $ \kappa + h^\vee \not \in \mathbb Q_{\geq 0}$  is braided tensor equivalent to the category of finite-dimensional weight modules for
the quantum group $U_q(\g)$ of $\g$ at parameter $q = \text{exp}\left(\frac{\pi i }{r^\vee (\kappa + h^\vee)}\right)$ \cite{KL}. Here $h^\vee$ is the dual Coxeter number of $\g$ and $r^\vee$ its lacing number. 

Let $f$ be a nilpotent element in $\g$. Then via quantum Hamiltonian reduction $H_f$ one obtains the $\W$-algebra $\W^\kappa(\g,f)$ from $V^\kappa(\g)$ \cite{FF1, KW}. Moreover the reduction of a $V^\kappa(\g)$-module is automatically a  $\W^\kappa(\g,f)$-module. Let $\KL^\kappa_f(\g)$ be the full subcategory generated by the image of $\KL^\kappa(\g)$ under reduction. We show
\begin{theorem}[{Theorem \ref{thm:main}}]\label{intro thm A}
 Let $\g$ be of type $ADE$ and $\kappa \in \C \backslash \Q$. Then, the BRST reduction $H_f\colon \KL^\kk(\g) \xrightarrow{\simeq} \KL_f^\kappa(\g)$ is an equivalence of braided tensor categories. 
\end{theorem}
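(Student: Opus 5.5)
Since $\kappa$ is irrational, $\KL^\kappa(\g)$ is semisimple: its simple objects are the Weyl modules $\weyl^\kappa_\lambda$, $\lambda\in P^+$, which are irreducible at irrational level. I will prove the statement in four steps. First, $H_f$ induces a fully faithful, essentially surjective functor. Second, it is monoidal. Third, it respects braidings. Finally, I collect these into the equivalence.

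\textbf{Step 1 (exactness, nonvanishing, full faithfulness).} I use the Arakawa-type vanishing theorem for the BRST reduction $H^\bullet_f$ on $\KL^\kappa(\g)$. It gives $H^i_f(\weyl^\kappa_\lambda)=0$ for $i\neq0$, so $H_f=H^0_f$ is exact on $\KL^\kappa(\g)$. I then compute the characters of $H_f(\weyl^\kappa_\lambda)$ via the Euler--Poincaré principle. Next I show these modules are nonzero, simple, and pairwise non-isomorphic for distinct $\lambda$. For simplicity at generic level, I would try a continuity argument: irreducibility is an open condition in $\kappa$, and at irrational $\kappa$ the relevant Kac--Kazhdan-type determinant of the $\W$-algebra has no zeros. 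A cleaner alternative is a free-field or Wakimoto realization adapted to $f$. Non-isomorphism follows from distinct conformal weights together with the action of the Zhu algebra, a finite $\W$-algebra. Granting this, $\KL^\kappa_f(\g)$ is semisimple with simple objects $H_f(\weyl^\kappa_\lambda)$. Hence $H_f$ is essentially surjective by definition and fully faithful by Schur's lemma on both sides.

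\textbf{Step 2 (monoidal structure).} $\KL^\kappa_f(\g)$ must first be shown to be a vertex tensor category in the sense of Huang--Lepowsky--Zhang. It is semisimple with finitely many simples appearing in each fusion, and the modules are $C_1$-cofinite of finite length. Given that, I construct natural maps
\[ H_f(M)\boxtimes H_f(N)\to H_f(M\boxtimes N). \]
These come from the fact that BRST reduction sends intertwining operators of $V^\kappa(\g)$-modules to intertwining operators of $\W^\kappa(\g,f)$-modules. The reason is that the BRST complex $C(M)=M\otimes F$ is a module over $C(V^\kappa(\g))$, and intertwining operators tensored with the ghost vertex algebra commute with the differential. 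The natural maps then follow from the universal property of the fusion product. These maps are compatible with associativity constraints, because those are defined through iterates and products of intertwining operators, and reduction commutes with both.

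\textbf{Step 3 (isomorphism; the main obstacle).} The main obstacle is showing the maps from Step 2 are isomorphisms. My approach is a comparison of fusion rules. Kazhdan--Lusztig together with semisimplicity give $\weyl_\lambda\boxtimes\weyl_\mu\cong\bigoplus_\nu \weyl_\nu^{\oplus c^\nu_{\lambda\mu}}$, with $c^\nu_{\lambda\mu}$ the classical tensor product multiplicities. So it suffices to show the space of $\W$-intertwiners of type $\binom{H_f\weyl_\nu}{H_f\weyl_\lambda\ H_f\weyl_\mu}$ has dimension at most $c^\nu_{\lambda\mu}$. The image of the reduced intertwiners then already spans the correct dimension, since the reduction map on intertwiners is injective by nonvanishing of leading terms. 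For the upper bound, I would use a Zhu/Frenkel--Zhu bimodule computation for the $\W$-algebra. Alternatively, I would reduce to the principal case and use the Feigin--Frenkel / Arakawa--Creutzig--Feigin identification of principal $\W$-algebras with cosets. In the simply-laced case, this realizes the principal $\W$-algebra at irrational level as the coset $\Com(V^{\kappa+1}(\g), V^{\kappa}(\g)\otimes L_1(\g))$. The fusion rules are then inherited from Kazhdan--Lusztig categories via the coset/mirror extension theory of Creutzig--Kanade--McRae. This is where the ADE hypothesis enters. For general $f$, I would use partial (reduction by stages) or hook-type coset realizations, or a degeneration/deformation argument in $\kappa$ from the principal case, which I expect to be the delicate point.

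\textbf{Step 4 (braiding).} Braidings on both sides come from the skew-symmetry of intertwining operators, $\mathcal Y(w,z)v=e^{zL_{-1}}\mathcal Y(v,e^{\pi i}z)w$. Reduction commutes with $L_{-1}$ modulo BRST-exact terms, because the $\W$-algebra conformal vector is the reduction of the twisted Sugawara vector. Hence the monoidal isomorphisms intertwine the braidings, and $H_f$ is a braided tensor equivalence.
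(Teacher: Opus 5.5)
\textbf{There is a genuine gap.} Your Steps 2 and 4 match what the paper does: reducing intertwining operators through the BRST complex, injectivity via leading terms, and compatibility with associativity and skew-symmetry. But Steps 1 and 3 carry all the content, and for a general nilpotent $f$ you give no workable argument for either.

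In Step 1 the determinant argument fails already for $\g=\sll_2$. There the Virasoro highest weights of $T^\kk_{\lambda,0}=\weyl^\kk_{\lambda,f_{\pr}}$ are degenerate Kac-table weights, where the Kac determinant vanishes for every central charge. So simplicity is a statement about a proper quotient of a reducible Verma module, proved in the principal case by Arakawa--Frenkel \cite{AF}. For non-principal $f$ there is no determinant formula to appeal to, and irreducibility is not an open condition in $\kk$ in any case.

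In Step 3, reducing to the bound $\dim I\binom{H_f\weyl_\nu}{H_f\weyl_\lambda\ H_f\weyl_\mu}\leq c^\nu_{\lambda\mu}$ presupposes that $\KL^\kk_f(\g)$ is closed under fusion, which you simply assumed in Step 2. The fusion product is formed in Huang's category $\W^\kk(\g,f)\mod_{C_1}$, which is much larger; for principal $f$ it contains every $T^\kk_{\lambda,\mu}$. Even granting Step 1, injectivity of reduction on intertwiners only gives a surjection $H_f\weyl_\lambda\boxtimes H_f\weyl_\mu\twoheadrightarrow H_f(\weyl_\lambda\boxtimes\weyl_\mu)$. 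A bound on intertwiners into the $H_f\weyl_\nu$ says nothing about its kernel, which could contain other $C_1$-cofinite constituents or non-split extensions.

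Finally, the options you list for general $f$ are not viable as stated. Reduction by stages and hook-type cosets are available only for special nilpotents, and a deformation in $\kk$ does not move between nilpotent orbits.

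Your principal-case idea (GKO coset plus extension theory) is the right starting point. What carries it to arbitrary $f$ is the Urod version of the GKO coset from \cite{ACF}. Reduction commutes with tensoring by $L_1(\g)\simeq V_Q$, so applying $H_f$ to the diagonal $V^\kk(\g)$ in $V^\kk(\g)\otimes\W^{\kk^o}(\g)\hookrightarrow V^{\kk-1}(\g)\otimes V_Q$ gives, for every $f$, a conformal embedding $\W^\kk(\g,f)\otimes\W^{\kk^o}(\g)\hookrightarrow\W^{\kk-1}(\g,f)\otimes V_Q$. It comes with the explicit branching rules of Corollary~\ref{cor:urod}.

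The paper first proves the principal fusion rules $T^\kk_{0,\lambda}\boxtimes T^\kk_{0,\mu}\simeq\bigoplus_\nu c^\nu_{\lambda\mu}T^\kk_{0,\nu}$ and $T^\kk_{\lambda,0}\boxtimes T^\kk_{0,\mu}\simeq T^\kk_{\lambda,\mu}$ by inducing along the GKO extension. This needs two inputs:
\begin{itemize}
\item the $C_1$-cofiniteness of the $T$-modules;
\item the monodromy computation $\cM_{T^\kk_{\lambda,0},T^\kk_{0,\mu}}=e^{-2\pi i(\lambda,\mu)}\Id$, which makes the induced objects local, i.e.\ genuine modules of the extension, so that induction computes their fusion.
\end{itemize}
It then shows that induction $\cF_f$ along the Urod extension sends $\W^\kk(\g,f)\otimes T^{\kk^o}_{0,\lambda}$ to the simple module $\weyl^{\kk-1}_{\lambda,f}\otimes V_{Q+\lambda^*}$, with simplicity coming from a duality argument.

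Since $\cF_f$ is monoidal on the full $C_1$-cofinite category, this yields three things at once: simplicity of the $\weyl^\kk_{\lambda,f}$, closure of $\KL^\kk_f(\g)$ under fusion, and $\weyl^\kk_{\lambda,f}\boxtimes\weyl^\kk_{\mu,f}\simeq\bigoplus_\nu c^\nu_{\lambda\mu}\weyl^\kk_{\nu,f}$. Only then does your dimension count, combined with injectivity of reduction on intertwiners, finish the proof.
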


In the theory of finite $\W$-algebras $U(\g,f)$, which is defined from the enveloping algebras $U(\g)$ of $\g$ in a similar way as the $\W$-algebras $\W^\kk(\g,f)$, it was proven by Losev \cite{Lo} that the reduction functor $H_f$ between the categories of Harish-Chandra bimodules over $U(\g)$ and $U(\g,f)$ is a tensor functor.
Theorem \ref{intro thm A} can be regarded as an upgrade in the vertex-algebraic context since Frenkel-Zhu's functor $A(-)$ sends the Kazhdan-Lusztig categories $\KL^\kk(\g), \KL_f^\kappa(\g)$ to the categories of Harish-Chandra bimodules and the the functor $A(-)$ commutes with $H_f$, thanks to a result by Arakawa \cite{A5}.

On the other hand, Theorem \ref{intro thm A} is an analogue at irrational levels of the result for exceptional $\W$-algebras at admissible levels \cite{ACF, AE}.
It, in particular, implies that the category $\KL^\kk_f(\g)$ does not depend on the choice of the representatives $f$ in a nilpotent orbit and especially the good gradings; the category $\KL^\kk_f(\g)$ is braided tensor equivalent to the category of weight modules of $U_q(\g)$.

Parameterizing the quantum group by $q$ is not misleading as the bialgebra structure only depends on $q$. Its category of finite dimensional weight modules becomes a braided tensor category with braiding described by the $R$-matrix. Here one makes the choice and the choice is in fact a root of unity of order $N$, where $N$ is the level of the weight lattice $P$ of $\g$, that is the smallest positive integer $N$, such that the lattice $NP$ is integral. 
Thus shifting $\frac{1}{k+h^\vee}$ by an even integer (for $\g$ simply laced) doesnot affect the $q$-parameter, but it affects the $R$-matrix, that is the braiding. This should amount to a cocycle twist by an element in $H^3(P/Q, \mathbb C^\times)$. 
See the introduction of \cite{Mo} for a discussion of this. 
We see this realized by "twisting" the categories by $\text{Vect}_{P/Q}$ in the following sense.
\begin{corollary}[{Corollary \ref{cor:main}}]
For $\g$ of type $ADE$, $\kk \in \C \backslash \Q$, define $\kk_m \in  \C \backslash \Q$ via  $$\frac{1}{\kk + h^\vee} = \frac{1}{\kk_m + h^\vee} +m,\quad (m\in \Z).$$
Then, there exists a braided tensor equivalence 
$$\KL^{\kk}_f(\g) \simeq \left(\KL_{f'}^{\kk_m}(\g)\boxtimes \vect^m_{P/Q}\right)^{P/Q}$$
for arbitrary nilpotent elements $f$ and $f'$.
\end{corollary}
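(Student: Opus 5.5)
By Theorem \ref{intro thm A}, $H_f\colon \KL^\kk(\g)\simeq \KL^\kk_f(\g)$ and $H_{f'}\colon \KL^{\kk_m}(\g)\simeq \KL^{\kk_m}_{f'}(\g)$ are braided tensor equivalences. So we may take $f=f'=0$, and it suffices to construct a braided tensor equivalence
$$\KL^{\kk}(\g)\simeq \left(\KL^{\kk_m}(\g)\boxtimes \vect^m_{P/Q}\right)^{P/Q}.$$
Here $\vect^m_{P/Q}$ denotes $P/Q$-graded vector spaces with the braiding given by the quadratic form $\lambda\mapsto e^{\pi i m(\lambda,\lambda)}$. This is well defined modulo $Q$ because $\g$ is simply laced, so $(Q,Q)\subset 2\Z$ and $(P,Q)\subset\Z$ up to the appropriate parity.

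The superscript $P/Q$ means the following. Both $\KL^{\kk_m}(\g)$ and $\vect^m_{P/Q}$ are graded by $P/Q$: the first by the central character, i.e.\ the class of the highest weight modulo the root lattice. We take the diagonal $P/Q$-graded component. Equivalently, we take local modules over the commutative algebra $\bigoplus_{\lambda\in P/Q}$ of objects of degree $(\lambda,-\lambda)$, when this algebra is available. I would phrase it simply as the full subcategory of $\KL^{\kk_m}(\g)\boxtimes\vect^m_{P/Q}$ whose objects have total degree $0$ in the $P/Q\times P/Q$ anti-diagonal sense. This is a braided tensor subcategory.

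The natural proof goes through a coset/Kazhdan-Lusztig comparison. Write $\ell=\kk+h^\vee$ and $\ell_m=\kk_m+h^\vee$, so that $1/\ell=1/\ell_m+m$.

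\textbf{Step 1.} Both $\KL^\kk(\g)$ and $\KL^{\kk_m}(\g)$ are semisimple for irrational level. Their simple objects are the Weyl modules $\weyl^\kk_\lambda$ and $\weyl^{\kk_m}_\lambda$ for dominant $\lambda\in P^+$, and their fusion rules are the classical tensor product multiplicities. Hence $\weyl^\kk_\lambda\mapsto \weyl^{\kk_m}_\lambda\boxtimes \C_{[\lambda]}$ is a bijection on simples compatible with fusion. Moreover the associators of both sides are controlled by the Drinfeld–Kohno theorem, i.e.\ by Kazhdan–Lusztig's equivalence with $U_q(\g)$-modules. Since $q=e^{\pi i/\ell}$ and $e^{\pi i/\ell_m}$ differ by $e^{\pi i m}=\pm1$, the underlying quantum groups agree up to this sign, and for $m$ even they coincide.

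\textbf{Step 2.} Compare the braidings. On $\weyl_\lambda\otimes\weyl_\mu$, the braiding is determined by the twist $\theta_\lambda=e^{2\pi i(\lambda,\lambda+2\rho)/2\ell}$. So
$$\theta^{\kk}_\lambda/\theta^{\kk_m}_\lambda=e^{\pi i m(\lambda,\lambda+2\rho)},$$
which depends only on $[\lambda]\in P/Q$ up to the factor $e^{2\pi i m(\lambda,\rho)}$. I would check that this factor is absorbed into the choice of quadratic form defining $\vect^m_{P/Q}$. It is a character of $P/Q$, trivial in the simply-laced case after combining with $(\lambda,\lambda)$ parity.

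\textbf{Step 3.} Implement the equivalence rigorously. The cleanest route is the Kazhdan–Lusztig equivalence itself. Both sides become $U_q(\g)$-weight modules, with $R$-matrices $q^{\sum H_i\otimes H^i}\cdot\Theta$. These differ only in the Cartan part by $e^{\pi i m(\lambda,\mu)}$ on weight spaces of weights $\lambda,\mu$. That Cartan part is precisely the braiding of $\vect^m$ restricted to $P/Q$ classes, after noting that $(\lambda,\mu)\bmod 2$ depends only on classes mod $Q$ for simply-laced $\g$. Equivalently, we realize the relation by a vertex algebra embedding $V^\kk(\g)\to V^{\kk_m}(\g)\otimes V_{\sqrt{m}\,Q}$-type lattice construction and apply the commutant/local module theory of Creutzig–Kanade–McRae.

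\textbf{Main obstacle.} The main obstacle is showing that the associators, not just the braidings, match. For this I would rely on Kazhdan–Lusztig plus the observation that twisting the $R$-matrix by a bicharacter of the weight lattice gives a braided category of the asserted product form. For odd $m$, the sign $q\mapsto -q$ requires identifying $U_{-q}$ with $U_q$ modulo such a twist, in the style of \cite{Mo}.
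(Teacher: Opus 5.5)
\textbf{There is a genuine gap: the odd-$m$ case, in particular $m=1$, is not proved.}

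Your reduction to $f=f'=0$ via Theorem \ref{thm:main} is legitimate; the paper reduces to $f=f'=f_{\pr}$ instead. For even $m$ your Kazhdan--Lusztig argument essentially works. There $q$ is unchanged, and the Cartan parts of the two $R$-matrices differ by $e^{\pi i m(\lambda,\mu)}$, which for even $m$ is a bicharacter of $P/Q$ because $(Q,P)\subset\Z$.

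The missing case $m=1$ is the real content of the corollary. Every $m$ is reached from $m=1$ by iteration, whereas composing even-$m$ equivalences never produces an odd $m$. For odd $m$ your Step 3 is incorrect as written, for three reasons.
\begin{itemize}
\item The two categories correspond to $U_q(\g)$ and $U_{-q}(\g)$. These are different Hopf algebras with different quasi-$R$-matrices $\Theta$, so it is not true that ``both sides become $U_q(\g)$-weight modules'' differing only in the Cartan factor.
\item The parity claim you use is false: $(\lambda,\mu)\bmod 2$ is not a function of the classes mod $Q$. For $\sll_2$, $(\omega,\omega)=\tfrac12$ while $(3\omega,\omega)=\tfrac32$. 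So $e^{\pi i m(\lambda,\mu)}$ is not even constant on $L_q(\lambda)\otimes L_q(\mu)$.
\item Most importantly, for odd $m$ the associators really do change. Already for $\sll_2$ and $m=1$, $\vect^1_{P/Q}\simeq V_Q\mod\simeq L_1(\sll_2)\mod$ is the semion category, whose associator is the nontrivial $3$-cocycle on $\Z/2$.
\end{itemize}
So what you need is a braided tensor equivalence between weight modules of $U_{-q}(\g)$ and weight modules of $U_q(\g)$ with associator twisted by a nontrivial abelian $3$-cocycle on $P/Q$, compatible with the Kazhdan--Lusztig functors. You flag this as the main obstacle and defer it to \cite{Mo}. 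That deferred step is precisely the missing idea, not a detail.

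Relatedly, the factor $e^{2\pi i m(\lambda,\rho)}$ in your Step 2 is not trivial: it equals $-1$ for $\sll_2$, $\lambda=\omega$, $m$ odd. This is harmless for a braided (as opposed to ribbon) equivalence, but it is a symptom of the same parity issue.

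The paper avoids quantum groups altogether. The case $m=1$ is Proposition \ref{prop: first comparison}. It composes Feigin--Frenkel duality with the induction functor $\cF_f$ along the Urod/GKO extension $\W^{\kk}(\g,f)\otimes\W^{\kk^o}(\g)\subset\W^{\kk-1}(\g,f)\otimes V_Q$. This gives a functor $\Psi$ with $\Psi(T^\kk_{0,\lambda})\simeq\weyl^{\kk_1}_{\lambda,f}\otimes V_{\lambda^*}$. Induction restricted to local modules is braided monoidal (Theorem \ref{sum1}), so associators and braidings are matched automatically, and the $V_Q$-factor supplies $\vect^1_{P/Q}$ together with its cocycle. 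General $m>0$ follows by iterating with the embedding $\vect^{m+n}_{P/Q}\hookrightarrow\vect^m_{P/Q}\boxtimes\vect^n_{P/Q}$, and $m<0$ by cancelling $\vect^{m}_{P/Q}$ against $\vect^{-m}_{P/Q}$.

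To complete your route you would have to prove the $U_{-q}$ versus cocycle-twisted $U_q$ comparison at the braided level. That is exactly what the vertex-algebraic construction provides for free.
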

Here $\text{Vect}^m_{P/Q}$ is the category of $P/Q$-graded vector spaces with braiding determined by the Killing form rescaled by $m$. 
in the case of $f=0$ and $f'$  principal, this corollary was conjectured by Aganagic-Frenkel-Okounkov in \cite[Conjecture 6.4]{AFO} without the abelian cocycle modifications corresponding to the factor $\vect^m_{P/Q}$.

So far braided tensor categories of $\W$-algebras have only been known for the Virasoro algebra \cite{CJORY}, the $N=1$ super Virasoro algebra \cite{CMOY} as well as exceptional simple $\W$-algebras at admissible levels \cite{A5, AE, C2}. The proof of our Theorem is similar to ideas of \cite{C2}. 
The Kazhdan-Lusztig category of universal $\W$-algebras at rational levels is usually not semi-simple and in particular in the case that its level shifted by the dual Coxeter number is a positive rational number its abelian structure is unknown, except for the recent studies of the case $\mathfrak{sl}_2$ \cite{MY}.

The first issue in proving such theorems has been the mere existence of a braided tensor category structure. Thanks to a recent Theorem of Yi-Zhi Huang \cite{H1} this is now guaranteed for the category of $C_1$-cofinite modules of any vertex operator algebra.  
Equivalences of categories of vertex algebras can now be established if one has a suitable relation between the involved vertex algebras. For us these are the GKO-coset realizations of principal $\W$-algebras of types $ADE$ \cite{ACL}. These allow us to determine important fusion rules of modules of principal $\W$-algebras. The modules $T^\kappa_{\lambda, \mu}$ are twisted quantum Hamiltonian reductions of Weyl modules \cite{AF} and we crucially show the fusion rule $T^\kappa_{\lambda, \mu} \cong T^\kappa_{\lambda, 0} \boxtimes T^\kappa_{0, \mu}$, see Theorem \ref{thm:fusion}.
This result together with the Urod-version of the GKO-coset \cite{ACF} then allows us to prove our main Theorem. 

One would like to obtain a similar result for the other simple Lie algebras and there are many more GKO-type cosets (see the introduction of \cite{CKL2}). In particular there are three GKO-type cosets that involve the affine vertex algebras of types $B, C$ and also $\mathfrak{osp}_{1|2n}$ as well as their principal $\W$-(super)algebras \cite{CKL, CGL, CL2}.
We have two works in progress. One on general structural results about $\W$-superalgebras and one on algebras of chiral differential operators on supergroups. These two will give us more detailed information about the GKO-cosets of orthosymplectic types  of \cite{CKL, CGL, CL2} and hence we expect to be able to report on a similar Theorem for types $B, C$ and also $\mathfrak{osp}_{1|2n}$ in the future. 

\subsection*{Acknowledgements} We are very grateful to Robert McRae and Cris Negron for lifting the result of Yi-Zhi Huang on the vertex tensor category structure on the category of $C_1$-cofinite modules \cite{H1} to its ind-completion \cite{MN}. Without their theorem, we could not have obtained our results. 

\section{Affine vertex algebras and $\W$-algebras}

\subsection{Affine vertex algebras}
Let $\g$ be a finite-dimensional simple Lie algebra, $\g=\nil_+\oplus \h \oplus \nil_-$ the standard triangular decomposition.
Let $Q$ denote the root lattice, $P$ (resp.\ $\check{P}$) the (co-)weight lattice, $P_+\subset P$ (resp.\ $\check{P}_+\subset \check{P}$) the set of dominant integral (co-)weights.

Let $\widehat{\g}_\kappa=\g(\!(t)\!)\oplus \C \mathbf{1}$ be the affine Kac-Moody Lie algebra associated with $\g$ at level $\kappa(\in \C)$, whose commutator is defined by 
\begin{align*}
[uf(t),vg(t)]=[u,v]f(t)g(t)+\kappa(u,v)\underset{t=0}{\mathrm{Res}}g(t)df(t) \mathbf{1},\quad [\widehat{\g}_\kappa,\mathbf{1}]=0
\end{align*}
where $(\cdot, \cdot)$ is a normalized invariant bilinear form on $\g$.
For a $\g$-module $M$, let 
\begin{align*}
\weyl^\kappa(\overline{M})=U(\gk)\underset{U(\g[\![t]\!]\oplus \C \mathbf{1})}{\otimes}\overline{M}
\end{align*}
be the parabolically induced $\gk$-module where $\overline{M}$ is regarded as a $(\g[\![t]\!]\oplus \C\mathbf{1})$-module where $\g[\![t]\!]$ acts by the natural projection $\g[\![t]\!]\twoheadrightarrow \g$ and $\mathbf{1}$ acts by the identity.
The Weyl modules $\weyl_\lambda^\kk$ with $\lambda \in \h^*$ are, by definition, those induced from the simple $\g$-module $L(\lambda)$ of  highest weight $\lambda$, i.e., $\weyl_\lambda^\kk=\weyl^\kk(L(\lambda))$.  
In particular, $V^\kappa(\g)=\weyl^\kk_0$ has a structure of vertex algebra, called the universal affine vertex algebra of $\g$ at level $\kk$.
Since $V^\kk(\g)$-modules are equivalent to smooth $\gk$-modules, $\weyl^\kk(\overline{M})$ are naturally $V^\kk(\g)$-modules. 

When the level $\kk$ is not critical, i.e., $\kk\neq- h^\vee$, $V^\g$ is equipped with a conformal vector $\omega_\g$ by the Segal-Sugawara construction. Note that the Fourier modes $L_n$ of the corresponding field $L(z)=\sum_{n \in \Z}L_n z^{-n-2}$, define the action of the Virasoro algebra on $V^\kk(\g)$ with semisimple $L_0$-action and $L_0$-eigenvalues are called conformal weights.

The Kazhdan-Lusztig category $\KL^\kappa(\g)$ of $V^\kappa(\g)$-modules is, by definition, the full subcategory consisting of those modules such that the $\g[\![t]\!]$-action  integrates to the group action of $G[\![t]\!]$ and the conformal weights are bounded from below.
When the level $\kappa$ is irrational, i.e., $\kappa\in \C\backslash \Q$, $\KL^\kappa(\g)$ is semisimple, and the simple objects are given by the Weyl modules $\weyl^\kappa_\lambda$ with $\lambda\in P_+$.

\subsection{\tmath{$\W$}-algebras}\label{sec: W-algebras}
Let $f$ be a nilpotent element in $\g$ and a good $\frac{1}{2}\Z$-grading $\Gamma\colon \g=\oplus_{d}\g_d$. 
Let us introduce the Lie subalgebras $\g_{\geq d}=\oplus_{d'\geq d}\,\g_{d'} \supset \g_{>d}=\oplus_{d'> d}\,\g_{d'}$ for $d\geqslant 0$.
Then, one associates a neutral fermion vertex algebra on the induced representation 
$$F_\chi=U(\g_{>0}(\!(t)\!))\otimes_{U(\g_{>0}[\![t]\!]+\g_{\geqslant 1}(\!(t)\!))}\C_{\chi}$$
where $\C_{\chi}$ is the one-dimensional representation of $\g_{>0}[\![t]\!]+\g_{\geqslant 1}(\!(t)\!)$ such that $\g_{>0}[\![t]\!]$ acts trivially and $\g_{\geqslant 1}(\!(t)\!)$ acts by the character $\chi(xt^n)=\delta_{n,-1}(f,x)$.

The $\W$-algebra $\W^\kk(\g,f)$ is, by definition, the vertex algebra realized on the $0$-th cohomology of Feigin's semi-infinite complex
\begin{align}\label{eq: BRST complex}
    C^{\frac{\infty}{2}+\bullet}(\g_{>0}(\!(t)\!); V^\kk(\g)\otimes F_\chi)
\end{align}
with the diagonal $\g_{>0}(\!(t)\!)$-action on $V^\kk(\g)\otimes F_\chi$.
This complex is also called the BRST complex \cite{FF1, KW}, which we denote by $C_f^\bullet(M)$ with $M=V^\kk(\g)$. Hence, the $\W$-algebra is defined as
$$\W^\kk(\g,f)=H^0(C_f^\bullet(V^\kk(\g))).$$
When the level $\kk$ is not critical, $\W^\kk(\g,f)$ is equipped with a conformal vector $\omega_{\g,f}$ (e.g. \cite[Section 2]{KW}) by using the Segal-Sugawara conformal vector $\omega_\g$ in $V^\kk(\g)$.

By replacing $V^\kk(\g)$ with its representations $M$ in \eqref{eq: BRST complex}, one obtains $\W^\kk(\g,f)$-modules $H_f^0(M)$, which defines a functor of representation categories
\begin{align}\label{eq: BRST reduction}
    H_{f}^0 \colon V^\kk(\g)\mod \rightarrow \W^\kk(\g,f)\mod.
\end{align}
In particular, let us introduce the $\W^\kk(\g,f)$-modules
$$\weyl_{\lambda,f}^\kk:=H_{f}^0(\weyl^\kk_\lambda)$$
for $\lambda \in P_+$.
In this paper, we study the category $\KL^\kappa_f(\g)$ of $\W^\kk(\g,f)$-modules whose objects are the direct sums of $\weyl_{\lambda,f}^\kk$ with $\lambda \in P_+$.
We note that $H_{f}^{\neq0}(\weyl^\kk(\overline{M}))=0$ holds for $\g$-modules $M$ by \cite{KW}.

\subsection{Twisted reductions}
The d.g. vertex superalgebra $C_f(V^\kk(\g))$ admits a diagonal embedding of the Heisenberg vertex algebra associated with the Cartan subalgebra $\h$, see \cite[Section 7]{ACF} for the details.
Therefore, we have a twisted action of $C_f(V^\kk(\g))$ on $C_f(M)$ for $V^\kk(\g)$-modules $M$, parametrized by the integrality condition $\check{\mu}\in \check{P}$. 
This new action equips $C_f(M)$ with a new differential, and thus a cohomology which we denote by $H_{f,\check{\mu}}^\bullet(M)$. 
Thanks to the twisted $C_f(V^\kk(\g))$-action on $C_f(M)$, $H_{f,\check{\mu}}^\bullet(M)$ is a module over the $\W$-algebra $\W^\kk(\g,f)$ \cite{ACF, AF}.

Later, we will use the case when $f$ is a principal nilpotent element. 
In this case, we will denote the corresponding $\W$-algebras by $\W^\kk(\g)$ and set
$$T_{\lambda,\check{\mu}}^\kk:=H_{f,\check{\mu}}^0(\weyl_{\lambda}^\kk),\quad (\lambda \in P_+,\ \check{\mu}\in \check{P}_+).$$

The Feigin-Frenkel duality \cite{FF1} asserts an isomorphism of vertex algebras
$$\W^\kk(\g)\simeq \W^{\check{\kk}}(\check{\g})$$
where $\check{\g}$ is the Langlands dual of $\g$ and the levels satisfie the relation $r^\vee(\kk+h^\vee)(\check{\kk}+\check{h}^\vee)=1$ with $r^\vee$ (resp.\ $h^\vee$) the lacing number (resp.\ the dual Coxeter number) of $\g$.
This duality is generalized to their representations $T_{\lambda,\check{\mu}}^\kk$ as follows.
\begin{theorem}[{\cite[Theorem 2.3]{AF}}]\label{thm: simplicity of T-modules}
Let $f$ be principal and $\kappa \in \C \backslash \Q$. 
For $\lambda \in P_+$ and $\check{\mu} \in \check{P}_+$, the $\W^\kk(\g)$-modules $T_{\lambda, \check{\mu}}^{\kappa}$ are simple and, moreover, there exist isomorphisms 
$$T^\kk_{\lambda,\check{\mu}}\simeq T^{\check{\kk}}_{\check{\mu},\lambda}$$
under the Feigin-Frenkel duality.
\end{theorem}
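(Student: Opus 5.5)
The plan is to identify both sides of the claimed isomorphism with one and the same \emph{simple highest weight} $\W^\kk(\g)$-module. Since the Feigin--Frenkel isomorphism $\W^\kk(\g)\simeq \W^{\check{\kk}}(\check{\g})$ matches Zhu algebras, and hence highest weights, it then suffices to prove two things: $T^\kk_{\lambda,\check{\mu}}$ is simple, and its highest weight is carried by the duality to that of $T^{\check{\kk}}_{\check{\mu},\lambda}$. I would parametrize highest weights of $\W^\kk(\g)$ through the Miura map, that is, via central characters of the Zhu algebra (the center of $U(\g)$). With this parametrization, a Verma module $\mathbf M(\gamma)$ for $\W^\kk(\g)$ is labelled by $\gamma\in\h^*$ modulo the dot action of $W$. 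Under Feigin--Frenkel, the label $\gamma$ is sent to $-(\check{\kk}+\check h^\vee)\gamma$ after the usual identification $\h^*\simeq\check{\h}$; the exact sign and $\rho$-shift conventions still need to be fixed.

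\textbf{Step 1 (vanishing, Euler character and top component).} The first step is to show that $H^{\neq 0}_{f,\check{\mu}}(\weyl^\kk_\lambda)=0$. I would do this by the same spectral sequence argument as in \cite{KW}, filtering the BRST complex by the twisted conformal weight. The $\check{\mu}$-twist only shifts the $\h$-Heisenberg grading, so the spectral sequence still degenerates. As a result, $\mathrm{ch}\,T^\kk_{\lambda,\check{\mu}}$ equals the Euler characteristic of the complex. That Euler characteristic is computable from $\mathrm{ch}\,\weyl^\kk_\lambda$, the Weyl character of $L(\lambda)$ times the bosonic $\g[t^{-1}]t^{-1}$ factor, specialized along the twisted grading. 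I expect the result to take the form
\[
\mathrm{ch}\,T^\kk_{\lambda,\check{\mu}}=\frac{\sum_{w\in W}\varepsilon(w)\,q^{\Delta(w,\lambda,\check{\mu})}}{\eta(q)^{\mathrm{rk}\,\g}},
\]
where the exponent $\Delta(w,\lambda,\check{\mu})$ is a quadratic expression in $\lambda+\rho-(\kk+h^\vee)(\check{\mu}+\check\rho)$ evaluated along $w$. I would also compute the top component directly: it is one-dimensional, and the Zhu algebra acts on it by the central character of $\gamma_{\lambda,\check{\mu}}=\lambda+\rho-(\kk+h^\vee)(\check{\mu}+\check\rho)$. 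Consequently $T^\kk_{\lambda,\check{\mu}}$ is a quotient of $\mathbf M(\gamma_{\lambda,\check{\mu}})$, because it is generated by its top. Generation by the top should follow from the fact that $\weyl^\kk_\lambda$ is generated by $L(\lambda)$, together with exactness of the reduction.

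\textbf{Step 2 (simplicity).} Next I would determine the character of the simple quotient $\mathbf L(\gamma_{\lambda,\check{\mu}})$. I would use the known structure of Verma modules for $\W^\kk(\g)$: via Arakawa's "$-$" reduction, which sends Verma modules to Verma modules and simples to simples, this structure is governed by the integral affine Weyl group of the corresponding $\hat\g$ weight. For $\kk\notin\Q$, an affine root $\alpha+n\delta$ is integral for $\gamma_{\lambda,\check{\mu}}$ only in very restricted cases. Because $\lambda$ and $\check{\mu}$ are both integral, the integral root system turns out to be isomorphic to that of $\g$, with a dominant regular weight, and the coset labelling of this embedded copy of $W$ is fixed by $\check{\mu}$. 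Kazhdan--Lusztig theory for this finite integral Weyl group at an antidominant/dominant regular weight then gives $\mathrm{ch}\,\mathbf L(\gamma)$ as an alternating sum over $W$ of Verma characters. This sum should coincide with the Euler character of Step 1. Since $T$ is a quotient of $\mathbf M(\gamma)$ whose character equals that of $\mathbf L(\gamma)$, it follows that $T=\mathbf L(\gamma)$ is simple.

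\textbf{Step 3 (duality).} The last step is to check that $\gamma_{\lambda,\check{\mu}}$, rescaled by $-(\check{\kk}+\check h^\vee)=-(\kk+h^\vee)^{-1}$ (here $r^\vee=1$ is not needed in general), equals $\gamma^{\check{\kk}}_{\check{\mu},\lambda}=\check{\mu}+\check\rho-(\check{\kk}+\check h^\vee)(\lambda+\rho)$ up to sign and Weyl group action. This is immediate from the formula. Simple highest weight modules with the same highest weight are isomorphic, which gives $T^\kk_{\lambda,\check{\mu}}\simeq T^{\check{\kk}}_{\check{\mu},\lambda}$.

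I expect the main obstacle to be Step 2: pinning down precisely the integral Weyl group of $\gamma_{\lambda,\check{\mu}}$ and its position relative to the dot-action, and matching the Kazhdan--Lusztig alternating sum term by term with the Euler character of Step 1, including the $\rho$-shifts coming from the twist. If this matching proves too delicate, my fallback is to prove simplicity by the Shapovalov/Kac determinant for $\W^\kk(\g)$ at generic level: I would check that no further singular vectors can occur in the quotient, since the relevant linkage conditions are not met when $\kk\notin\Q$.
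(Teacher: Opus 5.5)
The paper gives no argument for this statement; it is quoted from Arakawa--Frenkel, so there is no internal proof to compare yours with. Judged on its own, your route is the natural character-theoretic proof and it goes through:
\begin{itemize}
\item vanishing of the higher cohomology and an Euler--Poincar\'e character for the twisted reduction;
\item a one-dimensional top;
\item identification with $\mathbf L(\gamma_{\lambda,\check\mu})$ through Arakawa's ``$-$'' reduction;
\item comparison of labels under Feigin--Frenkel duality.
\end{itemize}

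The step you single out as the main obstacle is in fact automatic. For $\kk\notin\Q$, the affine root $\alpha+n\delta$ is integral for the level-$\kk$ weight with $\rho$-shifted finite part $\gamma_{\lambda,\check\mu}$ exactly when $n=\langle\alpha,\check\mu+\check\rho\rangle$. Because $\check\mu+\check\rho$ is regular dominant, no finite root is integral. Such an affine root is positive iff $\alpha>0$, and its pairing is then $\langle\lambda+\rho,\alpha^\vee\rangle>0$. This has three consequences:
\begin{itemize}
\item The finite part is antidominant, so Arakawa's theorem returns $\mathbf L(\gamma_{\lambda,\check\mu})$ rather than $0$.
\item The affine weight is regular dominant for an integral Weyl group isomorphic to $W$, so its character is a plain alternating sum and no Kazhdan--Lusztig polynomials enter.
\item The $\rho$-shifted finite parts of its orbit are $w(\lambda+\rho)-(\kk+h^\vee)(\check\mu+\check\rho)$ for $w\in W$. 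These are exactly the terms of your Euler character.
\end{itemize}

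Several points still need repair.

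\textbf{Generation by the top.} This does not follow from exactness of the reduction together with generation of $\weyl^\kk_\lambda$ by $L(\lambda)$. You do not need it, though. The top vector is annihilated by every mode that lowers conformal weight, so it generates a highest weight submodule $S$ that maps onto $\mathbf L(\gamma_{\lambda,\check\mu})$. Then $\mathrm{ch}\,\mathbf L(\gamma_{\lambda,\check\mu})\le\mathrm{ch}\,S\le\mathrm{ch}\,T^\kk_{\lambda,\check\mu}=\mathrm{ch}\,\mathbf L(\gamma_{\lambda,\check\mu})$, which forces $T^\kk_{\lambda,\check\mu}=S\simeq\mathbf L(\gamma_{\lambda,\check\mu})$.

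\textbf{The central character of the top.} This must actually be computed, for instance through the Miura map, where the twist shifts the Heisenberg zero modes by $-(\kk+h^\vee)\check\mu$. The conformal weight alone does not determine it once $\mathrm{rank}\,\g>1$, and both the simplicity identification and Step 3 depend on it.

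\textbf{Step 1.} The twist is not merely a regrading. It changes which current modes the leading part of the differential pairs with creation ghosts, and it leaves finitely many Cartan current modes unpaired. For $\mathfrak{sl}_2$ there are $\langle\alpha,\check\mu\rangle$ of them. You must check that these sit in ghost degree $0$ and that the twisted grading on the Kac--Wakimoto factor $C^-$ stays positive. This is where dominance of $\check\mu$ is used.

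\textbf{Step 3.} For non-simply-laced $\g$, keep the factor $r^\vee$ in the rescaling.
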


By \cite[Theorem 4.3]{AF}, the simple $\W^\kk(\g)$-module $T_{\lambda, \check{\mu}}^{\kappa}$ is a highest weight representation of highest weight $\chi(\lambda-(\kk+h^\vee) \check{\mu})$ and the conformal weight, that is the $L_0$-eigenvalue of the highest weight vector, is given by 
\begin{align*}
  h^{\kk}_\Lambda = \frac{(\Lambda,\Lambda+2\rho)}{2(\kappa+h^\vee)} - (\Lambda,\check{\rho}).  
\end{align*}
by setting $\Lambda=\lambda-(\kk+h^\vee) \check{\mu}$ where $\rho$ (resp.\ $\check{\rho}$) denotes the Weyl (co-)vector of $\g$, see e.g. \cite[Theorem 3.6]{KW}.
Then it is straightforward to derive the following formula, which will be used later:
\begin{equation}\label{eq:confweights}
    h(T_{\lambda,\check{\mu}}^{\kappa}) - h(T_{\lambda, 0}^{\kappa}) - h(T_{0, \check{\mu}}^{\kappa})
 = -(\lambda,\check{\mu}).
\end{equation}

\subsection{Equivariant \tmath{$\W$}-algebras at irrational levels}\label{sec: CDOs}

Given a (simply-connected, connected) simple algebraic group $G$ whose Lie algebra is $\g$, we have a vertex algebra, denoted by $\cdo{G}{\kappa}$, called the algebra of chiral differential operators at level $\kappa$.
It is a simple vertex algebra \cite[Section 9]{AM} and realized as global sections of sheaves of vertex algebras over $G$ whose local sections are the vertex-algebraic analogue of the differential operators on the domains, while the levels $\kappa$ parametrize the gluing conditions \cite{AG, GMS}. 
As an analogue of the algebra of global differential operators, which contain the enveloping algebra of $\g$ realized as left and right invariant vector fields, there is a vertex algebra homomorphism 
$$V^\kappa(\g)\otimes V^{\kk_o}(\g)\rightarrow\cdo{G}{\kappa}$$
where the level $\kk_{o}$ is determined by the formula 
\begin{align}\label{eq: opposit level}
    (\kappa+h^\vee)+(\kk_o+h^\vee)=0.
\end{align}
When $\kappa \in \C\backslash \Q$, it is an embedding, giving rise to a decomposition 
\begin{align*}
    \cdo{G}{\kappa}\simeq \bigoplus_{\lambda \in P_+} \weyl_{\lambda}^\kappa \otimes \weyl_{\lambda}^{\kk_o}
\end{align*}
as a $V^\kappa(\g)\otimes V^{\kk_o}(\g)$-module, an analogue of the Peter-Weyl theorem. 
The equivariant $\W$-algebras \cite{A4} are, by definition, the vertex algebras obtained from $\cdo{G}{\kappa}$ by applying the BRST reductions:
\begin{align*}
    \cdo{G,f}{\kappa}=H^0_{f}(\cdo{G}{\kappa}).
\end{align*}

\begin{theorem}[{\cite[Section 6]{A4}}]\label{thm: equiv Walg}
The equivariant $\W$-algebras $\cdo{G,f}{\kappa}$ are simple vertex algebras. 
For $\kappa \in \C \backslash \Q$, it is a conformal extension of $\W^\kk(\g,f) \otimes  V^{\kk_o}(\g)$ and decomposes into
\begin{align*}
\cdo{G,f}{\kappa}\simeq \bigoplus_{\lambda \in P_+} \weyl_{\lambda,f}^\kappa \otimes \weyl_{\lambda}^{\kk_o}
\end{align*}
as a $\W^\kappa(\g,f) \otimes  V^{\kk_o}(\g)$-module.
\end{theorem}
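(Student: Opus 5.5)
The plan is to apply the BRST reduction to the Peter--Weyl decomposition of $\cdo{G}{\kappa}$, with the reduction acting only on the left factor $V^\kappa(\g)$. The image of $V^{\kk_o}(\g)$ in $\cdo{G}{\kappa}$ commutes with $V^\kappa(\g)$, so it supercommutes with the BRST differential of $C_f^\bullet(\cdo{G}{\kappa})$, which is built only from the $V^\kappa(\g)$-currents, the fermions and the ghosts. Hence $C_f^\bullet(\cdo{G}{\kappa})$ is a d.g.\ vertex superalgebra containing $V^{\kk_o}(\g)$ in degree $0$ and in the kernel of the differential.

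\emph{Step 1: the decomposition.} The semi-infinite complex commutes with direct sums. Using $\cdo{G}{\kappa}\simeq\bigoplus_{\lambda\in P_+}\weyl^\kappa_\lambda\otimes\weyl^{\kk_o}_\lambda$, we get
\[
C_f^\bullet(\cdo{G}{\kappa})\simeq\bigoplus_{\lambda} C_f^\bullet(\weyl^\kappa_\lambda)\otimes\weyl^{\kk_o}_\lambda ,
\]
and so $H_f^n(\cdo{G}{\kappa})\simeq\bigoplus_\lambda H^n_f(\weyl^\kappa_\lambda)\otimes\weyl^{\kk_o}_\lambda$. By the vanishing $H^{\neq0}_f(\weyl^\kk(\overline M))=0$ of \cite{KW}, only $n=0$ survives, and we obtain the stated decomposition as a $\W^\kappa(\g,f)\otimes V^{\kk_o}(\g)$-module. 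The summand $\lambda=0$ is the image of $\W^\kappa(\g,f)\otimes V^{\kk_o}(\g)$, which embeds because $\weyl^{\kk_o}_0=V^{\kk_o}(\g)$.

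\emph{Step 2: conformality.} For $\kappa\notin\Q$ the conformal vector of $\cdo{G}{\kappa}$ equals $\omega_\g+\omega_{\g}^{\kk_o}$, the sum of the two Sugawara vectors. The standard reduction replaces $\omega_\g$ by $\omega_\g+\partial x+\omega_{\mathrm{gh}}+\omega_{\mathrm{ne}}$, whose class is $\omega_{\g,f}$, and leaves $\omega^{\kk_o}_\g$ untouched. Therefore the class of the conformal vector of $\cdo{G,f}{\kappa}$ is $\omega_{\g,f}+\omega_\g^{\kk_o}$, which lies in $\W^\kappa(\g,f)\otimes V^{\kk_o}(\g)$. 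This gives a conformal extension. The $L_0$-spectrum is read off from the decomposition: the Casimir parts $\frac{(\lambda,\lambda+2\rho)}{2(\kappa+h^\vee)}$ and $\frac{(\lambda,\lambda+2\rho)}{2(\kk_o+h^\vee)}$ cancel by \eqref{eq: opposit level}, leaving a grading that is bounded below on each summand with finite-dimensional pieces.

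\emph{Step 3: simplicity (main obstacle).} Let $I\subset\cdo{G,f}{\kappa}$ be a nonzero ideal. It is a $V^{\kk_o}(\g)$-submodule. The $\weyl^{\kk_o}_\lambda$ are simple, pairwise non-isomorphic objects of the semisimple category $\KL^{\kk_o}(\g)$, and the $\W^\kappa(\g,f)$-action commutes with $V^{\kk_o}(\g)$. Hence
\[
I=\bigoplus_\lambda I_\lambda\otimes\weyl^{\kk_o}_\lambda
\]
with $I_\lambda\subset\weyl^\kappa_{\lambda,f}$ a $\W^\kappa(\g,f)$-submodule. It then suffices to show that $I_0\ni\mathbf 1$.

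Take $0\neq v\in I_\lambda$. I would use a nondegenerate pairing between the $\lambda$- and $\lambda^*$-components, obtained from the vertex operators of $\cdo{G,f}{\kappa}$ composed with projection to the $\lambda=0$ summand, to produce a nonzero element of $I_0\otimes V^{\kk_o}(\g)$. Since $\weyl^\kappa_{0,f}=\W^\kappa(\g,f)$ is generated by $\mathbf 1$ and the vacuum is the unique lowest-weight vector, we would then get $I\ni\mathbf 1$.

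The cleanest way to obtain the nondegeneracy is to construct a nondegenerate invariant bilinear form on $\cdo{G,f}{\kappa}$, whose radical is the maximal ideal. The idea is to reduce the invariant form on the simple algebra $\cdo{G}{\kappa}$ \cite{AM} and use that $H_f$ is compatible with contragredient duality, so that $H_f(\weyl^\kappa_\lambda)^\vee\simeq H_f((\weyl^\kappa_\lambda)^\vee)$. Then check $L_1$-invariance of $\mathbf 1$ for the modified conformal vector, and finite-dimensionality of the graded pieces, so that the form can be restricted summand by summand.

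The genuinely delicate point is this compatibility of $H^0_f$ with duality, for the possibly non-simple modules $\weyl^\kappa_{\lambda,f}$ when $f$ is non-principal; that is where I expect the main work. If it fails, the fallback is to use the Frenkel--Zhu algebra and Arakawa's commutation $A(H_f(-))=H_f(A(-))$ to transport simplicity from the finite-dimensional analogue, namely the equivariant finite $\W$-algebra of \cite{Lo}.
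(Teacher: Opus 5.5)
\textbf{Verdict: genuine gap.} Your Steps 1 and 2 are fine. The decomposition follows from additivity of the complex together with $H_f^{\neq 0}(\weyl^\kappa_\lambda)=0$, and the class of the conformal vector is $\omega_{\g,f}+\omega^{\kk_o}_\g$. The problem is Step 3, the simplicity. It is the only substantial part of the statement (the paper itself just quotes \cite{A4}), and your proposal does not prove it.

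Your reduction $I=\bigoplus_\lambda I_\lambda\otimes\weyl^{\kk_o}_\lambda$ is correct. Take $w$ in the $\lambda^*$-summand and $v\in I_\lambda\otimes\weyl^{\kk_o}_\lambda$. The $\lambda=0$ projection of $Y(w,z)v$ is, up to the $V^{\kk_o}(\g)$-factor, given by $\mathcal{Y}^{\W}$, the $H^0_f$-image of the canonical intertwiner $\weyl^\kappa_{\lambda^*}\times\weyl^\kappa_\lambda\to V^\kappa(\g)$. What you need is that $\mathcal{Y}^{\W}$ stays nonzero, with nonzero vacuum component, on $\weyl^\kappa_{\lambda^*,f}\times I_\lambda$ for \emph{every} nonzero submodule $I_\lambda\subset\weyl^\kappa_{\lambda,f}$. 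Equivalently, the induced map $\weyl^\kappa_{\lambda,f}\to(\weyl^\kappa_{\lambda^*,f})'$ must be injective. Proposition~\ref{prop: BRST reduction for intertwining operators} only gives $\mathcal{Y}^{\W}\neq 0$.

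Injectivity would follow from simplicity of all $\weyl^\kappa_{\lambda,f}$, including $\weyl^\kappa_{0,f}=\W^\kappa(\g,f)$. For non-principal $f$ this is not available a priori: in this paper it is Theorem~\ref{thm:fusion}(1), proved only in type $ADE$ and via the GKO/Urod machinery. The compatibility $H^0_f(M)'\simeq H^0_f(M')$, which you single out as the main work, would not close the gap either. It yields \emph{some} isomorphism $\weyl^\kappa_{\lambda^*,f}\simeq(\weyl^\kappa_{\lambda,f})'$, not that the particular map defined by the vertex operators is injective. For that you would additionally need control of $\mathrm{End}(\weyl^\kappa_{\lambda,f})$.

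There are further problems with the invariant-form route.
\begin{itemize}
\item Li's identification of the radical of an invariant form with the maximal ideal uses $V_0=\C\mathbf 1$ and a lower-bounded grading with finite-dimensional pieces. Here the Casimir terms cancel by \eqref{eq: opposit level}, but the shift coming from the good grading does not. For principal $f$ the summand $T^\kappa_{\lambda,0}\otimes\weyl^{\kk_o}_\lambda$ starts in weight $-(\lambda,\check{\rho})$, so the grading is unbounded below with infinite-dimensional eigenspaces. For $f=0$ the weight-zero space is $\bigoplus_\lambda L(\lambda)\otimes L(\lambda^*)\simeq\C[G]$. So the form must be handled summand by summand, which brings you back to the injectivity problem above.
\item The Zhu-algebra fallback is not an argument. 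Zhu's correspondence needs positive energy, and simplicity of a vertex algebra is not detected by its Zhu algebra or by finite $\W$-algebras.
\item Even if completed, your argument gives simplicity only for $\kappa\notin\Q$, while the statement asserts it at all levels.
\end{itemize}

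As far as I recall the cited source, it avoids the modules $\weyl^\kappa_{\lambda,f}$ altogether and argues geometrically. The associated graded $\mathrm{gr}\,\cdo{G,f}{\kappa}$ is the arc-space algebra $\C[J(G\times S_f)]$, where $S_f$ is the Slodowy slice and $G\times S_f$ is the smooth symplectic Hamiltonian reduction of $T^*G$. Simplicity then follows from the arc-space criterion that gives simplicity of $\cdo{G}{\kappa}$ in \cite{AM}, uniformly in $\kappa$.
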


\subsection{Cosets à la Goddard-Kent-Olive}

By the Frenkel-Kac construction, the simple affine vertex algebra $L_1(\g)$ at level one is isomorphic to the lattice vertex algebra $V_Q$ associated with the root lattice $Q$. Goddard, Kent, and Olive \cite{GKO} observed that when $\g=\sll_2$, the Virasoro vertex algebra, i.e., $\W^\kappa(\sll_2)$ is realized as the following coset subalgebra
\begin{align*}
    \W^{\kk^o}(\sll_2)\simeq \mathrm{Com}(V^{\kk}(\sll_2),V^{\kk-1}(\sll_2)\otimes L_1(\sll_2)),
\end{align*}
or equivalently, a conformal embedding 
\begin{align*}
    V^{\kk}(\sll_2) \otimes \W^{\kk^o}(\sll_2)\hookrightarrow V^{\kk-1}(\sll_2)\otimes L_1(\sll_2),
\end{align*}
for irrational levels $\kappa$ where the other level $\kk^o$ is determined by the relation $\frac{1}{\kk+2}+\frac{1}{\kk^o+2}=1$.
The following is a similar coset realization of the principal $\W$-algebras for simply-laced Lie algebras, which can be regarded as a shifted version of the equivariant $\W$-algebras in the previous section. It has been first proven in \cite{ACL} and then been reproven in very different fashions afterwards. 

\begin{theorem}[\cite{ACL, CN, CL1}] \label{thm:GKO1} For simple Lie algebras $\g$ of type $ADE$ and $\kk \in \C \backslash \Q$, set $\kk^o \in \C \backslash \Q$ by the relation
\begin{equation}\label{eq: shifting the level}
    \frac{1}{\kappa + h^\vee} + \frac{1}{\kk^o + h^\vee} = 1. 
\end{equation} 
Then, there is a conformal embedding $V^\kk(\g) \otimes \W^{\kk^o}(\g)\hookrightarrow V^{\kk-1}(\g)\otimes V_Q$. 
Moreover, for $\lambda,\mu \in P_+$, it induces decompositions 
\begin{align*}
        \weyl_\lambda^{\kk-1} \otimes  V_{Q+\mu} \simeq \bigoplus_{\substack{\nu \in P_+ \\ \nu \equiv \lambda + \mu}} \weyl_\nu^\kk \otimes T^{\kk^o}_{\nu, \lambda}
\end{align*}
as $V^\kappa(\g) \otimes \W^{\kk^o}(\g)$-modules where $\nu \equiv \lambda +\mu$ is equality modulo $Q$.
\end{theorem}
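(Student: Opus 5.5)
The plan has two parts: first identify the commutant $\Com(V^\kk(\g), V^{\kk-1}(\g)\otimes V_Q)$ with $\W^{\kk^o}(\g)$, then read off the branching rules by semisimplicity and a character comparison.

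\textbf{The embedding.} The diagonal map $V^\kk(\g)\to V^{\kk-1}(\g)\otimes L_1(\g)\simeq V^{\kk-1}(\g)\otimes V_Q$ exists because levels add. I would realize everything inside free fields.
\begin{enumerate}[label=(\roman*)]
\item Embed $V^{\kk-1}(\g)$ at generic level into a Wakimoto-type free field algebra (a $\beta\gamma$-system tensored with a Heisenberg algebra $\pi^{\kk-1+h^\vee}$). Its image is the intersection of the kernels of screening operators.
\item Write $V_Q$ as the lattice algebra on the Heisenberg algebra $\pi^1$.
\item After an orthogonal change of basis of $\h\oplus\h$, the diagonal current algebra absorbs one Heisenberg combination. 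The commutant then lands inside a single Heisenberg algebra $\pi$ with background charge.
\item The residual screening operators on $\pi$ are exponentials $e^{-\alpha_i/\sqrt{\cdot}}$ whose normalization is exactly that of the Miura/screening realization of $\W^{\kk^o}(\g)$ at the level fixed by \eqref{eq: shifting the level}.
\end{enumerate}
Since at generic level $\W^{\kk^o}(\g)$ equals the intersection of the kernels of its screenings, we get an inclusion of $\W^{\kk^o}(\g)$ into the coset. Equality then follows from comparing graded characters. The coset character is computed from the branching of $V^{\kk-1}(\g)\otimes V_Q$ over $V^\kk(\g)$ by the Weyl--Kac character formula at generic level. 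Conformality follows once the Virasoro fields match, that is, $\omega_{\g}^{\kk-1}+\omega_{V_Q}-\omega_\g^{\kk}$ equals the conformal vector of $\W^{\kk^o}(\g)$; this is a central charge check using $\frac1{\kk+h^\vee}+\frac1{\kk^o+h^\vee}=1$.

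\textbf{The decomposition.} The module $\weyl^{\kk-1}_\lambda\otimes V_{Q+\mu}$ restricted to the diagonal $\widehat\g_\kk$ is $\g[\![t]\!]$-integrable with conformal weights bounded below. So it lies in $\KL^\kk(\g)$, which is semisimple for $\kk\notin\Q$ with simples $\weyl^\kk_\nu$. Hence
\[
\weyl^{\kk-1}_\lambda\otimes V_{Q+\mu}\simeq\bigoplus_{\nu\in P_+}\weyl^\kk_\nu\otimes M_\nu ,
\]
where the multiplicity spaces $M_\nu$ are $\W^{\kk^o}(\g)$-modules. The center $P/Q$ acts compatibly, which forces $\nu\equiv\lambda+\mu$ mod $Q$.

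\textbf{Identifying $M_\nu$.} Inside $M_\nu$, take the $\widehat\g_\kk$-highest weight vector of weight $\nu$, chosen of minimal conformal weight in that isotypic component. I would check two things about it.
\begin{itemize}
\item It is a $\W^{\kk^o}(\g)$-highest weight vector of weight $\chi(\nu-(\kk^o+h^\vee)\lambda)$. This is read off the free field realization: the relevant Fock vector has the corresponding momentum.
\item Its conformal weight matches $h^{\kk^o}_{\nu-(\kk^o+h^\vee)\lambda}$. This amounts to checking $h^{\kk-1}_\lambda+\tfrac12|\gamma|^2-h^\kk_\nu$ against the formula for $h^\kk_\Lambda$.
\end{itemize}
This gives a nonzero map from the Verma module of $\W^{\kk^o}(\g)$ to $M_\nu$. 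Its image surjects onto the simple module $T^{\kk^o}_{\nu,\lambda}$ (simple by Theorem~\ref{thm: simplicity of T-modules}), provided $M_\nu$ has no proper submodules containing that vector. To conclude that $M_\nu\simeq T^{\kk^o}_{\nu,\lambda}$, I would compare characters. The character of $M_\nu$ comes from the branching identity for $\mathrm{ch}\,\weyl^{\kk-1}_\lambda\cdot\mathrm{ch}\,V_{Q+\mu}$. The character of $T^{\kk^o}_{\nu,\lambda}$ is the Euler character of the twisted reduction of a Weyl module, namely the Weyl-group alternating sum over $\eta(q)^{-\mathrm{rk}}$.

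\textbf{Main obstacle.} I expect the hardest step to be this character identity: matching the theta-function expansion of $\mathrm{ch}\,V_{Q+\mu}$ times $\mathrm{ch}\,\weyl^{\kk-1}_\lambda$ with $\sum_\nu \mathrm{ch}\,\weyl^\kk_\nu\cdot\mathrm{ch}\,T^{\kk^o}_{\nu,\lambda}$. I would attack it by writing $\mathrm{ch}\,\weyl^\kk_\nu=\mathrm{ch}\,L(\nu)\cdot\prod(1-q^n e^\alpha)^{-1}$ and noting that the affine denominators cancel on both sides. What remains is a purely finite identity: the decomposition of $\mathrm{ch}\,L(\lambda)\cdot\sum_{\gamma\in Q+\mu}q^{|\gamma|^2/2}e^\gamma$ into $\sum_\nu \mathrm{ch}\,L(\nu)\cdot(\text{$q$-series})$. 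I would prove this via Weyl's character formula and a shift of the summation lattice, reindexing $\gamma=\nu+w(\lambda+\rho)-\rho$ type terms.
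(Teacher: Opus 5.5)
The paper gives no proof of this statement; it is quoted from \cite{ACL, CN, CL1}. I therefore assess your argument on its own terms. Your character bookkeeping is sound: semisimplicity of the diagonal restriction at irrational level, the constraint $\nu\equiv\lambda+\mu$, cancellation of the affine denominators, and the reduction to a finite Weyl-character identity. But that identity is not the real obstacle.

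\textbf{The embedding step.} The genuine gap is step (iii), which is false. Only the Cartan currents split as you describe. The diagonal raising currents are $E_i=e_i^{\mathrm{Wak}}+e^{\alpha_i}$, where $e_i^{\mathrm{Wak}}$ is built from the $\beta\gamma$-system alone and $e^{\alpha_i}\in V_Q$. Write $b_u$, $a_u$ ($u\in\h$) for the Heisenberg fields of the Wakimoto factor and of $V_Q$. The Heisenberg algebra $\pi^\perp\subset\pi^{\kk-1+h^\vee}\otimes\pi^1$ orthogonal to the diagonal Cartan is spanned by $b_u-(\kk-1+h^\vee)a_u$. Under it, $e^{\alpha_i}$ has charge $-(\kk-1+h^\vee)\alpha_i\neq0$.
\begin{itemize}
\item An element of $\pi^\perp$ commuting with $E_i$ must commute with $e^{\alpha_i}$.
\item So it is annihilated by all positive modes of the Heisenberg field in that charge direction.
\item These directions span, hence $\pi^\perp\cap\Com(V^\kk(\g),V^{\kk-1}(\g)\otimes V_Q)=\C\one$.
\end{itemize}
Concretely, for $\g=\sll_2$ the GKO Virasoro field contains the cross term $-\frac{1}{\kk+2}J^e\otimes J'^f$. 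This is a nonzero multiple of ${:}\beta(z)e^{-\alpha}(z){:}$, which has nonzero lattice charge. So it lies in no Heisenberg subalgebra generated by charge-neutral fields.

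Consequently you never obtain a vertex algebra map between $\W^{\kk^o}(\g)$ and the coset, and a character count cannot replace one. Even granting (iii)--(iv), the screening description would give coset $\subseteq\W^{\kk^o}(\g)$, the opposite of the inclusion you state; either direction would suffice together with characters. What is missing is a genuine mechanism producing this homomorphism. Note that the reduced embedding $\W^{\kk}(\g)\otimes\W^{\kk^o}(\g)\hookrightarrow\W^{\kk-1}(\g)\otimes V_Q$, where a Miura/Heisenberg picture of your type would make sense, is \emph{deduced} from this theorem in the paper. So it cannot be used as input without an independent proof.

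\textbf{Identifying $M_\nu$.} The same defect breaks this step. The $\W^{\kk^o}$-highest weight of the top vector was to be read off from the free field realization, and characters cannot do that job. For instance, $T^{\kk^o}_{\nu,\lambda}$ and $T^{\kk^o}_{\nu^*,\lambda^*}$ have the same conformal weight and the same $q$-character; they differ by twisting with the diagram automorphism acting as $-w_0$ on $\h^*$. Yet in types $A_{n\ge2}$, $D_{2m+1}$ and $E_6$ they are non-isomorphic whenever $(\nu,\lambda)\neq(\nu^*,\lambda^*)$. The reason is that $\kk^o\notin\Q$ forces the rational and irrational parts of the relevant shifted $W$-orbits to match separately, which gives $\nu=\nu^*$ and $\lambda=\lambda^*$. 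So you need an independent determination of the highest weight, for example of the central character on the top space of $M_\nu$.

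With that in hand, your closing step works. The highest weight submodule $N\subseteq M_\nu$ surjects onto $T^{\kk^o}_{\nu,\lambda}$. Then $\mathrm{ch}\,T^{\kk^o}_{\nu,\lambda}\le\mathrm{ch}\,N\le\mathrm{ch}\,M_\nu=\mathrm{ch}\,T^{\kk^o}_{\nu,\lambda}$ forces $M_\nu=N\simeq T^{\kk^o}_{\nu,\lambda}$.
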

\noindent 

By \cite{ACF}, tensoring the simple affine vertex algebra $L_n(\g)$ at $n \in \Z_{\geq0}$ and applying the quantum Hamiltonian reduction commute:
$$H_f^\bullet(V^\kk(\g)\otimes L_n(\g))\simeq H_f^\bullet(V^\kk(\g))\otimes L_n(\g).$$
When $\g$ is of type $ADE$ and $n=1$, Theorem \ref{thm:GKO1} implies the conformal embedding
$$\W^{\kk}(\g,f)\otimes \W^{\kk^o}(\g)\hookrightarrow \W^{\kk-1}(\g,f)\otimes V_Q.$$
The branching rules of tensor product representations under this conformal embedding are described as follows.
\begin{corollary}[\cite{ACF}] \label{cor:urod} 
    Let $\g$ be of type $ADE$ and $\kappa \in \C \backslash \Q$ with $\kk^o$ as in \eqref{eq: shifting the level}.
\begin{enumerate}[leftmargin=*]
\item  For an arbitrary nilpotent element $f$ and $\lambda,\mu \in P_+$
\begin{equation*}
\weyl_{\lambda, f}^{\kk-1} \otimes  V_{Q+\mu} \simeq \bigoplus_{\substack{\mu' \in P_+ \\ \mu' = \lambda + \mu}} \weyl_{\mu', f}^\kappa \otimes T^{\kk^o}_{\mu', \lambda}
\end{equation*}
as modules over $\W^{\kk}(\g,f)\otimes \W^{\kk^o}(\g)$.
\item  For a principal nilpotent element $f$ and $\lambda, \mu, \nu \in P_+$
\begin{equation*}
T_{\lambda, \mu}^{\kappa-1} \otimes  V_{Q+\nu} \simeq \bigoplus_{\substack{\mu' \in P_+ \\ \mu' \equiv \lambda +\mu+ \nu}} T_{\mu', \mu}^\kk \otimes T^{\kk^o}_{\mu', \lambda}
\end{equation*}
as modules over $\W^{\kk}(\g)\otimes \W^{\kk^o}(\g)$.
\end{enumerate}
\end{corollary}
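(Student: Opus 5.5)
The plan is to derive both statements by applying the twisted/untwisted BRST reduction $H^0_{f}$ (respectively $H^0_{f,\check\mu}$) to the decomposition of Theorem \ref{thm:GKO1}, viewed as one for $V^{\kk-1}(\g)$-modules. Throughout we keep the $V^{\kk}(\g)\otimes \W^{\kk^o}(\g)$-structure on the right-hand side.

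\textbf{Step 1: exactness and compatibility with the lattice factor.} Fix $\lambda,\mu\in P_+$ and regard $\weyl^{\kk-1}_\lambda\otimes V_{Q+\mu}$ as a module over $V^{\kk-1}(\g)\otimes V_Q$, hence over the diagonal affine vertex algebra at level $\kk$. The result of \cite{ACF} recalled above, $H_f^\bullet(V^{\kk}(\g)\otimes L_1(\g))\simeq H_f^\bullet(V^\kk(\g))\otimes L_1(\g)$, is proved at the level of BRST complexes: the complex for the diagonal action is quasi-isomorphic, via a Urod-type change of generators, to the complex for the first factor tensored with $L_1(\g)$. This argument works equally for modules. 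Applying the reduction for the diagonal level-$\kk$ action therefore gives
\[
H^0_f\bigl(\weyl^{\kk-1}_\lambda\otimes V_{Q+\mu}\bigr)\simeq \weyl^{\kk-1}_{\lambda,f}\otimes V_{Q+\mu},
\]
compatibly with the conformal embedding $\W^{\kk}(\g,f)\otimes\W^{\kk^o}(\g)\hookrightarrow \W^{\kk-1}(\g,f)\otimes V_Q$.

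\textbf{Step 2: reduce the right-hand side.} On the right-hand side, the diagonal affine algebra is exactly the $V^\kk(\g)$ factor, which commutes with $\W^{\kk^o}(\g)$. The reduction therefore acts only on the first tensor factor:
\[
H^0_f\bigl(\weyl^\kk_\nu\otimes T^{\kk^o}_{\nu,\lambda}\bigr)=\weyl^\kk_{\nu,f}\otimes T^{\kk^o}_{\nu,\lambda}.
\]
The reduction commutes with the (possibly infinite) direct sum because the BRST complex is built degreewise from tensor products. Cohomology vanishes in nonzero degrees because each $\weyl^\kk_\nu$ is induced from a $\g$-module \cite{KW}. Hence no extension or spectral-sequence issues arise, and (1) follows with $\mu'=\nu\equiv\lambda+\mu$.

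\textbf{Step 3: the principal twisted case.} For (2) we run the same argument with the twisted reduction $H^0_{f,\check\mu}$ for principal $f$. The twist is by the diagonal Heisenberg field of $\h$. Under the Urod identification, this field corresponds to the Heisenberg field of the $V^{\kk-1}(\g)$ factor plus that of $V_Q$. Twisting on the $V_Q$ side sends $V_{Q+\nu}$ to a module whose class in $P/Q$ is unchanged, since $\check\mu\in\check P=P$ for simply-laced $\g$. The resulting module is again isomorphic to some $V_{Q+\nu'}$ with $\nu'\equiv\nu$, so it can be relabelled.

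\textbf{Main obstacle.} The hard part is precisely checking that the Urod isomorphism of \cite{ACF} intertwines the twisted differentials. Concretely, one must show that the spectral flow by $\check\mu$ of the diagonal Heisenberg field factors as spectral flow on $V^{\kk-1}(\g)$ times a lattice translation on $V_Q$. I would try to verify this directly on generators of the complex. With this in hand, the left-hand side becomes $T^{\kk-1}_{\lambda,\mu}\otimes V_{Q+\nu}$ and the right-hand side becomes $\bigoplus T^\kk_{\mu',\mu}\otimes T^{\kk^o}_{\mu',\lambda}$. As a consistency check, conformal weights computed via \eqref{eq:confweights} should match on both sides.
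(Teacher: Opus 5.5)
Your overall route is the one the paper relies on; the paper imports the statement from \cite{ACF}. You reduce the decomposition of Theorem \ref{thm:GKO1} with respect to the diagonal level-$\kk$ action. On the left you use the Urod isomorphism, and on the right you note that only the factor $\weyl^\kk_\nu$ is affected. Steps 1 and 2 are fine and give (1).

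Step 3, however, contains a genuine error that produces the wrong index set in (2).

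\emph{What goes wrong.} On the lattice factor, the twist by the diagonal Heisenberg field acts as spectral flow by $\mu\in\check P=P$. Spectral flow by an element of the dual lattice $P$ of $Q$ translates the simple $V_Q$-modules, $V_{Q+\nu}\mapsto V_{Q+\nu\pm\mu}$. Because $\mu$ lies in $P$ and not in $Q$, the class in $P/Q$ changes whenever $\mu\notin Q$. Your justification ``since $\check\mu\in\check P=P$'' is therefore exactly backwards. This translation is the sole source of the $\mu$ in the congruence $\mu'\equiv\lambda+\mu+\nu$. If you relabel within the same class, applying $H^0_{f,\mu}$ to $\weyl^{\kk-1}_\lambda\otimes V_{Q+\nu}$ gives a sum over $\mu'\equiv\lambda+\nu$, which is false.

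\emph{How to see it.} The consistency check you propose would have caught this. Take $\g=\sll_2$, $\omega$ the fundamental weight, $\lambda=\nu=0$ and $\mu=\omega$. The highest-weight formula then gives
\[
h(T^{\kk}_{c\omega,\omega})+h(T^{\kk^o}_{c\omega,0})-h(T^{\kk-1}_{0,\omega})=\frac{(c-1)(c-3)}{4},
\]
which is an integer only for odd $c$. So only $\mu'\equiv\omega$ can occur in $T^{\kk-1}_{0,\omega}\otimes V_Q$, not $\mu'\equiv 0$.

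\emph{The correct bookkeeping.} With the sign convention implicit in the statement, the twist sends $V_{Q+\nu+\mu}$ to $V_{Q+\nu}$. So you should apply $H^0_{f,\mu}$ to $\weyl^{\kk-1}_\lambda\otimes V_{Q+\nu+\mu}$. Its decomposition in Theorem \ref{thm:GKO1} runs over $\mu'\equiv\lambda+\mu+\nu$, and the right-hand side then reduces term by term to $T^\kk_{\mu',\mu}\otimes T^{\kk^o}_{\mu',\lambda}$. The compatibility of the Urod isomorphism with the twisted differentials, which you flag as the main obstacle, is indeed the input taken from \cite{ACF}. Its effect on $V_Q$, however, is a genuine translation by $\mu$, not a relabelling.
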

\begin{comment}
\begin{corollary} \label{cor:urod} For irrational $\kappa$
    \begin{enumerate}[leftmargin=*]
    \item  For $\g$ simply laced and any nilpotent element $f$ and $\nu \in P_+$
\begin{equation}
\begin{split}
\weyl_{\lambda, f}^{\kappa-1} \otimes  V_{Q+\omega} &\cong \bigoplus_{\substack{\mu \in P_+ \\ \mu = \lambda + \omega\ \text{mod}\ Q }} \weyl_{\mu, f}^\kappa \otimes T^\ell_{\mu, \lambda}   \\   
T_{\lambda, \nu}^{\kappa-1} \otimes  V_{Q+\omega} &\cong \bigoplus_{\substack{\mu \in P_+ \\ \mu = \lambda +\nu+ \omega\ \text{mod}\ Q }} T_{\mu, \nu}^\kappa \otimes T^\ell_{\mu, \lambda}
\end{split}
\end{equation}
 The level $\ell$ is related to $\kappa$ via
\begin{equation}
    \frac{1}{\kappa + h^\vee} + \frac{1}{\ell + h^\vee} = 1. 
\end{equation}
\item For $\g = \mathfrak{sp}_{2n}$ and any nilpotent element $f$ and $\rho \in P^\vee_+$, 
\begin{equation}
\begin{split}
 \weyl_{\lambda, f}^{\kappa-1} \otimes  F(4n) &\cong \bigoplus_{\mu \in P_+} \bigoplus_{\nu \in P^\vee_+ \cap \mathbb Z^n }  \weyl_{\mu, f}^\kappa \otimes T^{\ell_1}_{\mu, \nu} \otimes T^{\ell_2}_{\lambda, \nu}  \\
 T_{\lambda, \rho}^{\kappa-1} \otimes  F(4n) &\cong \bigoplus_{\mu \in P_+} \bigoplus_{\nu \in P^\vee_+ \cap \mathbb Z^n }  T_{\mu, \rho}^\kappa \otimes T^{\ell_1}_{\mu, \nu} \otimes T^{\ell_2}_{\lambda, \nu} 
\end{split}
\end{equation}
with 
\[
\frac{1}{\kappa + h^\vee} + \frac{1}{\ell_1 + h^\vee} = 2, \qquad
(\ell_1 + h^\vee) + (\ell_2 + h^\vee) = 1.
\]
\end{enumerate}
\end{corollary}
\end{comment}

\section{Braided tensor categories}
We recall the structures of a vertex and braided tensor category on the representation categories of simple conformal vertex superalgebras, developed by Huang-Lepowsky-Zhang \cite{HLZ0}-\cite{HLZ2}. The main reference here is \cite[\S 2.4]{CY}. 
\subsection{Intertwining Operators}
Throughout this section, $(V,\omega)$ denotes a simple conformal vertex algebra $(V,\omega)$ such that the field  $L(z)=\sum_{n\in \Z} L_n z^{-n-2}$ corresponding to the conformal vector $\omega$ gives a conformal grading 
\begin{align*}
    V=\bigoplus_{\Delta \in \frac{1}{r}\Z_{\geq0}}V_\Delta,\quad V_0=\C \one
\end{align*}
for some $r\in \Z_{>0}$. The $V$-modules $M$ are always assumed to be lower-bounded, that is, it admits a generalized conformal grading 
$$M=\bigoplus_{\Delta \in \C}M_\Delta,\quad \dim M_\Delta<\infty$$
which is lower-bounded in the sense that there exist some $\lambda_1,\cdots, \lambda_n\in \C$ satisfying 
$$\{\Delta\in \C\mid M_\Delta\neq0\}\subset \bigsqcup \{\lambda_i+\frac{1}{r}\Z_{\geq0}\}.$$

Given a $V$-module $M$, we denote by $Y(\cdot, z)\colon V\times M \rightarrow M(\!(z)\!)$ the structure map.
The tensor product $M_1\boxtimes M_2$ of the $V$-modules $ M_1$ and $ M_2$ is defined as the object representing the space of (logarithmic) intertwining operators given below.

For $V$-modules $M_1, M_2, M_3$, a (logarithmic) \emph{intertwining operator} of type ${M_3\choose M_1\,M_2}$ is a bilinear map
\begin{align}\label{log:map0} \nonumber
\mathcal{Y}(\cdot,z): &M_1\times M_2\to M_3\{z\}[\log z] \\ \nonumber
&(u,v)\mapsto \sum_{k=0}^{K}\sum_{n\in
{\mathbb C}}{u}_{n, k}^{
\mathcal{Y}}v\ z^{-n-1}(\log z)^{k}
\end{align}
satisfying the following conditions for each 
$$a \in V,\quad u \in M_1, \quad v \in M_2, \quad n\in \C, \quad k=0,\cdots, K.$$
\begin{enumerate}[leftmargin=*]
\item The {\em lower truncation
condition}:
\begin{equation} \nonumber
{u}_{n+m, k}^{\mathcal{Y}}v=0\;\;\mbox{ for }\;m\in {\mathbb
N} \;\mbox{sufficiently large}.
\end{equation}
\item The {\em Jacobi identity}:
\begin{equation} \nonumber
\begin{split}
 z^{-1}_0\delta \bigg( {z_1-z_2\over x_0}\bigg)
Y(a,z_1){\mathcal{Y}(u,z_2)v}
- z^{-1}_0\delta \bigg( {z_2-z_1\over -z_0}\bigg)
{\mathcal{Y}}(u,z_2)Y(a,z_1)v \\
 = z^{-1}_2\delta \bigg( {z_1-z_0\over z_2}\bigg){
\mathcal{Y}}(Y(a,z_0)u,z_2) v.
\end{split}
\end{equation}
\item The {\em $L_{-1}$-derivative property:} 
\begin{equation}\nonumber
{\mathcal{Y}}(L_{-1}u,z)=\frac d{dz}{\mathcal{Y}}(u,z).
\end{equation}
\end{enumerate}
In the above, $\delta(z)=\sum_{n\in \Z} z^{n}$ denotes the $\delta$-function.
The intertwining operators satisfy the following property.
\begin{proposition}[{\cite[Proposition 2.3]{C1}}]\label{prop:intertwiner}
Let $M_1$, $M_2$ be $V$-modules  the subspaces generated by $N_1$ and $N_2$, respectively, and $\mathcal Y(\cdot,z)$ an intertwining operator of type ${M_3\choose M_1\,M_2}$. If $\mathcal Y(N_1, z)N_2=0$, then $\mathcal Y(\cdot, z)=0$ holds.
\end{proposition}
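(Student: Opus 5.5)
The statement is somewhat garbled. Its intended content is: if $M_1$ and $M_2$ are generated as $V$-modules by subspaces $N_1\subset M_1$ and $N_2\subset M_2$, and $\mathcal Y(N_1,z)N_2=0$, then $\mathcal Y=0$. The plan is to propagate vanishing in two stages, first in the second argument and then in the first, using only the Jacobi identity.

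\textbf{Step 1 (second argument).} Fix $u\in N_1$. Let $W=\{v\in M_2 : \mathcal Y(u,z)v=0\}$. The hypothesis gives $N_2\subset W$, so it suffices to show that $W$ is a $V$-submodule; then $W=M_2$ because $N_2$ generates. For $a\in V$ and $v\in W$, take the Jacobi identity, multiply by $z_0^{\ell}$ for $\ell\ge 0$ large, and take $\mathrm{Res}_{z_0}$. This gives the commutator-type formula
\[
\mathcal Y(u,z_2)\,a_m v = a_m\,\mathcal Y(u,z_2)v - \sum_{j\ge0}\binom{m}{j} z_2^{m-j}\,\mathcal Y(a_j u,z_2)v .
\]
The first term on the right vanishes since $v\in W$. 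The second term involves $a_ju\in M_1$, which need not lie in $N_1$, so the naive induction stalls here. This is the main obstacle.

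To get around it, I will enlarge the statement and prove that the set
\[
S=\{(u,v)\in M_1\times M_2 : \mathcal Y(u,z)v=0\}
\]
contains $M_1\times M_2$. The strategy is to show the following two facts.

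\textbf{Step 2 (first argument).} Fix $v\in N_2$ and consider the space $U_v=\{u\in M_1 : \mathcal Y(u,z)v=0\}$. I claim $U_v$ is $V$-stable. If $u\in U_v$, then every term on the left-hand side of the Jacobi identity containing $\mathcal Y(u,z_2)v$ vanishes, except the term $\mathcal Y(u,z_2)Y(a,z_1)v$. Multiply by $(z_1-z_2)^{N}$, with $N$ large enough to kill the formal singularity in $Y(a,z_1)\mathcal Y(u,z_2)v$ (weak associativity), and take residues in $z_1$. This expresses $\mathcal Y(a_n u,z_2)v$ through terms of the form $\mathcal Y(u,z_2)a_k v$. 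So once Step 1 has shown $\mathcal Y(u,z)M_2=0$ for $u\in N_1$, the set of $u$ with $\mathcal Y(u,z)M_2=0$ is closed under all $a_n$.

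\textbf{Step 3 (the combined induction).} The cleanest way to run this is to work with the space $U=\{u\in M_1 : \mathcal Y(u,z)N_2=0\}$ and the space $W$ of $v$ such that $\mathcal Y(U,z)v=0$. The commutator formula shows $W$ is $V$-stable provided $U$ is stable under the modes $a_j$ for $j\ge0$. Dually, the associator formula shows $U$ is $V$-stable provided $\mathcal Y(U,z)$ kills the $V$-submodule generated by $N_2$. I will run the induction on conformal weight using lower truncation, handling both arguments simultaneously, so that both closures hold at once.

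Alternatively, and this is what I expect the source to do, I will invoke the standard fact (Dong–Lepowsky; Li) that the annihilator $\{u : \mathcal Y(u,z)M_2=0\}$ is a $V$-submodule. This follows directly from Jacobi, since fixing $Y(a,z_1)$ and using associativity $\mathcal Y(Y(a,z_0)u,z_2)v$ is determined by $\mathcal Y(u,z_2)$ acting on all of $M_2$. Combining this fact with Step 1 (the $V$-stability of the annihilator in $M_2$ of a single $u$, which uses commutativity, with the associator terms also lying in that annihilator) finishes the proof: first deduce $\mathcal Y(N_1,z)M_2=0$, and then $\mathcal Y(M_1,z)M_2=0$.
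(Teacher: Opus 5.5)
There is a genuine gap. You read the garbled statement correctly ($M_i$ is generated by $N_i$), and you correctly locate where the naive argument stalls, but the proposal never gets past that point. Everything hinges on showing that a single-element annihilator is $V$-stable: either $W_u=\{v\in M_2:\mathcal Y(u,z)v=0\}$ for fixed $u$, or $U_v=\{u\in M_1:\mathcal Y(u,z)v=0\}$ for fixed $v$.

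\begin{itemize}
\item Your final paragraph relies on ``Step 1'' for $W_u$. But Step 1 is exactly the commutator-formula argument you showed fails: the terms $\mathcal Y(a_ju,z)v$ are not known to vanish, and saying they ``lie in that annihilator'' assumes what is to be proved.
\item Step 2 asserts that $U_v$ is $V$-stable. The computation you describe (multiply by $(z_1-z_2)^N$, take $\mathrm{Res}_{z_1}$) only reproduces the iterate formula, which expresses $\mathcal Y(a_nu,z)v$ through $\mathcal Y(u,z)a_kv$. So it again needs the other annihilator.
\item Step 3 makes the circularity explicit. The ``simultaneous induction on conformal weight'' is never specified, and the modes $a_j$ with $0\le j<\mathrm{wt}\,a-1$ raise weight, so it is not clear what would be inducted on.
\item The Dong--Lepowsky/Li fact that $\{u:\mathcal Y(u,z)M_2=0\}$ is a submodule is correct, but it only handles the easy second stage.
\end{itemize}

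The missing idea is to use the polynomial (``weak'') forms of commutativity and associativity. These \emph{eliminate} a term outright, whereas the residue formulas always leave a term with the other argument moved.

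Take $N\ge 0$ with $a_ju=0$ for $j\ge N$. Multiplying the Jacobi identity by $z_0^N$ and taking $\mathrm{Res}_{z_0}$ gives $(z_1-z_2)^N Y(a,z_1)\mathcal Y(u,z_2)v=(z_1-z_2)^N\mathcal Y(u,z_2)Y(a,z_1)v$. If $\mathcal Y(u,z_2)v=0$, the left side is zero. Since $\mathcal Y(u,z_2)Y(a,z_1)v$ is lower truncated in $z_1$, comparing coefficients of its lowest power of $z_1$ lets you cancel $(z_1-z_2)^N$. Hence $\mathcal Y(u,z)a_mv=0$ for all $m$, so $W_u$ is a submodule. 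This gives $\mathcal Y(N_1,z)M_2=0$, and your second stage then finishes the proof.

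Alternatively, multiply the Jacobi identity by $z_1^l$ with $z_1^lY(a,z_1)v\in M_2[[z_1]]$ and take $\mathrm{Res}_{z_1}$. This kills the $\mathcal Y(u,z_2)Y(a,z_1)v$ term and leaves $(z_2+z_0)^l\mathcal Y(Y(a,z_0)u,z_2)v=0$. Cancelling the polynomial factor via lower truncation in $z_0$ shows $U_v$ is a submodule. Your Step 1 commutator formula then works verbatim, since now $a_ju\in M_1$ and $\mathcal Y(M_1,z)N_2=0$.

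In Step 2 you used the commutativity factor $(z_1-z_2)^N$ on the first-argument annihilator. That factor is the right tool for the second-argument annihilator $W_u$; $U_v$ needs the associativity factor $z_1^l$.
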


Let $I_V\binom{M_3}{M_1\  M_2}$ denote the space of all intertwining operators of type $\binom{M_3}{M_1 \ M_2}$, whose dimension is called the {\it fusion rule} of $M_1$, $M_2$ and $M_3$.
The tensor product (or \emph{fusion product}) of $M_1$ and $M_2$ in a $V$-module category $\mathcal{C}$, denoted by $M_1 \boxtimes M_2$, is an object in $\cC$ (unique up to isomorphisms) which represents the functor 
\begin{align*}
    \cC \rightarrow \mathrm{Vect}_\C,\quad M_3 \mapsto I_V\binom{M_3}{M_1\ M_2}.
\end{align*}

Note that there are natural isomorphisms 
\begin{align}\label{eq: skew symmetry}
    \Omega_r\colon I_V\binom{M_3}{M_1\ M_2}\xrightarrow{\simeq} I_V\binom{M_3}{M_2\ M_1},\quad (r \in \Z)
\end{align}
given by 
\begin{equation*}
\Omega_r(\mathcal Y)(v, z)u= e^{z'L_{-1}} \mathcal Y(u, z')v \Big\vert_{z'{}^n = e^{n(2r+1) \pi i} z^n\ (n \in \C)}.
\end{equation*}
Let $\cC$ be a category of $V$-modules containing $V$ and closed under tensor product. 
Then, under some technical assumptions in \cite{HLZ0}-\cite{HLZ2} on the composability of intertwining operators, $\cC$ has a vertex tensor category structure given by the point-wise tensor product $M_1 \boxtimes_{P(z)}M_2$, called the $P(z)$-tensor product, depending on the points $z \in\C \backslash \{0\}$ to evaluate the variable appearing in the intertwining operators.
It is noteworthy that we need different points to iterate the tensor product $M_1 \boxtimes_{P(z_1)}(M_2 \boxtimes_{P(z_2)} M_3)$.
However, the point-wise tensor products are all isomorphic by the functorial isomorphisms $M_1 \boxtimes_{P(z_1)}M_2 \simeq M_1 \boxtimes_{P(z_2)}M_2$, called the parallel transport. 
Then, by specializing $M_1 \boxtimes M_2 := M_1 \boxtimes_{P(1)}M_2$ and using the parallel transport, we obtain a braided tensor category structure on $\cC$. 
The unit $\one$ is $V$ itself with unit morphisms 
$$\ell_A\colon \one \boxtimes M\xrightarrow{\simeq} M,\quad r_A\colon M \boxtimes \one\xrightarrow{\simeq} M$$
induced by the structure map $Y(\cdot,z)\colon V\times M \rightarrow M(\!(z)\!)$ and $\Omega_1(Y)(\cdot,z)$. The associativity isomorphisms
$$\cA_{M_1, M_2, M_3}\colon M_1 \boxtimes (M_2\boxtimes M_3)\xrightarrow{\simeq} (M_1\boxtimes M_2) \boxtimes M_3$$
are induced by the associativity of $P(z)$-tensor products (or equivalently, intertwining operators). 
The braiding isomorphisms 
$$\cR_{M_1,M_2}\colon M_1 \boxtimes M_2 \xrightarrow{\simeq} M_2 \boxtimes M_1$$
are obtained from $\Omega_{r=1}$ in \eqref{eq: skew symmetry}. The double braidings 
$\cM_{M_1,M_2}=\cR_{M_2,M_1}\circ \cR_{M_1,M_2}$ are called the monodromy isomorphisms.
Hereafter, the braided tensor category structure on the categories $\cC$ of $V$-modules will mean the one obtained in this way.
When $V$ is $\Z$-graded, then $\cC$ admits a twist $\theta_M=e^{2\pi i L_0}|_M$, which satisfies the balancing isomorphism 
\begin{align}\label{eq: balancing axiom}
    \theta_{M\boxtimes N}=\cM_{M,N}\circ (\theta_M\boxtimes \theta_N).
\end{align} 
If $M$ has a left (and thus right) dual object, it is given by the contragredient module $M^*=\bigoplus_{\Delta}\Hom_\C(M_\Delta,\C)$. The category $\cC$ is called rigid if each object has a dual object, and thus ribbon if $V$ is $\Z$-graded.

\subsection{Commutative algebras}
Let $\cC$ be a vertex (and thus braided) tensor category of $V$-modules.
Vertex algebra extensions $(V,\omega)\subset (A,\omega)$ in $\cC$ preserving the conformal structure can be equivalently rephrased as commutative algebras in $\cC$ \cite{CKM, HKL}.

A commutative algebra in $\cC$ is a triple $(A, m, u)$ consisting of an object $A$ in $\cC$, and morphisms $m: A \otimes A \rightarrow A$, and $u: \one \rightarrow A$ which make the following diagrams commutative:
\begin{enumerate}[leftmargin=*]
    \item Associativity axiom
    \begin{equation*}
\begin{split}
\xymatrix{
A \boxtimes \left(A \boxtimes A \right)  \ar[rr]^(0.5){ \cA_{A, A, A} }\ar[d]_{ \Id_A \boxtimes m}
&& \left(A \boxtimes A\right)\boxtimes A  \ar[d]^{m \boxtimes \Id_A} &
\\
A \boxtimes A \ar[rd]_{m}
&& A\boxtimes A \ar[ld]^{m} &
\\
& A. &&
\\
}
\end{split}
\end{equation*}
\item Unit axiom
\begin{equation*}
    \xymatrix{
\one \boxtimes A \ar[r]^{\ell_A}\ar[d]_{u\boxtimes \Id_A} & A \ar[d]^{\Id_A} && A\boxtimes \one \ar[r]^{r_A}\ar[d]_{\Id_A \boxtimes u} & A \ar[d]^{\Id_A}
\\  
A\boxtimes A \ar[r]^m & A, && A\boxtimes A \ar[r]^m & A.
\\
}
\end{equation*}
\item Commutativity axiom
\begin{equation*}
\xymatrix{
A\boxtimes A \ar[rd]_{m}\ar[rr]^{\cR_{A, A}}
&& A\boxtimes A \ar[ld]^{m} &
\\
& A. &&
\\
}
\end{equation*}
\end{enumerate}
By restriction, the modules over $A$ as a conformal vertex algebra are also $V$-modules. The $A$-modules which lie in $\cC$ as $V$-modules can be rephrased as \emph{local $A$-modules} in $\cC_A$. 
With $A$ a commutative algebra in $\cC$, an $A$-module in $\cC$ is a pair $(M, m_M)$ of an object $M$ in $\cC$ and a morphism $m_M: A \boxtimes M \rightarrow M$ which make the following diagrams commutative:
\begin{enumerate}[leftmargin=*]
\item Associativity axiom
\begin{equation*}
\begin{split}
\xymatrix{
A \boxtimes (A\boxtimes M)    \ar[rr]^{  \cA_{A, A, M} } \ar[d]_{\Id_A \boxtimes m_M } 
&&  (A \boxtimes A) \boxtimes M  \ar[d]^{m \boxtimes \Id_M}
\\
A \boxtimes M \ar[rd]_{m_M} && A \boxtimes M \ar[ld]^{m_M} \\
& M. &} 
\end{split}
\end{equation*}
\item Unit axiom
\begin{equation*}
\xymatrix{
\one \boxtimes M \ar[rd]_{\ell_M}\ar[rr]^{ u\boxtimes \Id_M }
&& A \boxtimes M \ar[ld]^{m_M} &
\\
& M. &&
\\
}
\end{equation*}
\end{enumerate}
It is called local if it further makes the following diagram commute:
\begin{equation*}
\xymatrix{
A\boxtimes M \ar[rd]_{m_M}\ar[rr]^{\cM_{A,M}}
&& A\boxtimes M \ar[ld]^{m_M} &
\\
& M. &&
\\
}
\end{equation*}
Let $\cC_A$ (resp.\ $\cC_A^{\text{loc}}$) denote the category of $A$-modules (resp. local $A$-modules) in $\cC$ whose morphisms $f\colon (M,m_M) \rightarrow (N,m_N)$ are those $f \colon M \rightarrow N$ in $\cC$ which make the following diagram commute: 
\begin{equation*}
\xymatrix{
A \boxtimes  M    \ar[rr]^{  \Id_A \boxtimes f   } \ar[d]_{ m_M } 
&&  A \boxtimes N  \ar[d]^{m_N }
\\
M \ar[rr]^{f} && N.
} 
\end{equation*}
Note that the forgetful functor
\[
\cG_A: \cC_A \rightarrow \cC, \qquad \begin{array}{ccc}(M, m_M) &\mapsto& M,\\ f&\mapsto& f, \end{array} 
\]
admits a left adjoint, called the induction functor,
\[
\cF_A: \cC \rightarrow \cC_A, \qquad \begin{array}{ccc} M &\mapsto& (A \boxtimes M, m \boxtimes \Id_M),\\  f& \mapsto& \Id_A \boxtimes f. \end{array} 
\]
The natural isomorphism
\begin{equation}\label{eqn:fro-rec}
\Hom_{\cC_A}(\cF_A(M), N) \simeq \Hom_{\cC}(M, \cG_A(N))
\end{equation}
is called Frobenius reciprocity.
By the right unit constraint the diagram
\[
\xymatrix{
  (A\otimes \one) \otimes X \ar[rr]^{r_A \otimes \Id_X}\ar[d]_{\Id_A \otimes u \otimes \Id_X} && A \otimes X \ar[d]^{\Id_A \otimes \Id_X}
\\  
 (A\otimes A) \otimes X \ar[rr]^m && A \otimes X 
\\
}
\]
commutes, i.e. the multiplication on $\cF_A(X)$ restricted to $X$ is surjective and so $\cF_A(X)$ is generated by $X$ as an $A$-module. 
\begin{theorem}[\cite{CKM}]\label{sum1} 
Let $A$ be a commutative algebra in a vertex (braided) tensor category $\cC$ of $V$-modules.
\begin{enumerate}[leftmargin=*]
\item $\cC_A$ has a tensor category structure and $\cC_A^{loc}$ has a braided tensor category structure which is a tensor subcategory of $\cC_A$. 
\item $\cF_A: \cC \rightarrow \cC_A$ is a tensor functor and restricted to a braided tensor functor $\cF_A: \cC^0 \rightarrow \cC_A^{loc}$
from a full subcategory $\cC^0\subset \cC$ consisting of objects $M$ satisfying $\cM_{A, M}=\Id_{A\btimes M}$.
\item The category of modules in $\cC$ over $A$ as a vertex algebra admits a
braided tensor category structure, which is canonically braided tensor equivalent to the category $\cC_A^{loc}$ of local $\cC$-algebra modules. 
\end{enumerate}
\end{theorem}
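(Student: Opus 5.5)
The plan follows the Kirillov--Ostrik / Pareigis approach to module categories over commutative algebras. The main work is to make each categorical step compatible with the Huang--Lepowsky--Zhang vertex tensor structure on $\cC$.

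For (1), define the relative tensor product of $(M,m_M),(N,m_N)\in\cC_A$ as the coequalizer
\[
M\boxtimes_A N:=\mathrm{coker}\big(m_M\boxtimes \Id_N-(\Id_M\boxtimes m_N)\circ\cA^{-1}\circ(\cR_{A,M}^{-1}\boxtimes\Id_N)\circ\cA\big)\colon (A\boxtimes M)\boxtimes N\to M\boxtimes N .
\]
Here $M$ is regarded as a right $A$-module via the braiding. This needs cokernels in $\cC$ and right exactness of $\boxtimes$. Right exactness follows because $M_1\boxtimes -$ is defined by representing the functor of intertwining operators, which is left exact in the target. The left $A$-action on $M\boxtimes_A N$ is induced from that on $M$, and the associator and unitors descend from those of $\cC$ via the universal property. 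The pentagon and triangle axioms are inherited, and commutativity of $A$ is what makes the two possible $A$-actions agree. On $\cC_A^{loc}$ the braiding $\cR_{M,N}$ descends to $M\boxtimes_A N\to N\boxtimes_A M$. Locality, i.e.\ $\cM_{A,M}=\Id$, is exactly the condition ensuring that $\cR$ intertwines the two coequalized maps, and that $M\boxtimes_A N$ is again local. The hexagon axioms then come from those of $\cC$.

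For (2), use the canonical isomorphism $\cF_A(M)\boxtimes_A\cF_A(N)\cong A\boxtimes(M\boxtimes N)$. It is built from $m$ and associativity, and one checks it is an isomorphism by showing that $A\boxtimes(M\boxtimes N)$ satisfies the coequalizer universal property; the unit axiom provides the splitting. Compatibility with associators is a diagram chase. If $\cM_{A,M}=\Id$, then $\cF_A(M)$ is local, since the monodromy of $A$ with $A\boxtimes M$ reduces to $\cM_{A,A}$ and $\cM_{A,M}$, and $\cM_{A,A}=\Id$ by commutativity. Compatibility of the braidings then follows from naturality of $\cR$ together with the hexagon axioms.

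Part (3) is the main obstacle. A $\cC_A$-structure $m_M\colon A\boxtimes M\to M$ is the same as an intertwining operator $Y_M$ of type $\binom{M}{A\ M}$ extending the $V$-action. One must show that the associativity diagram for $m_M$ is equivalent to associativity of iterated intertwining operators, $Y_M(a,z_1)Y_M(b,z_2)=Y_M(Y_A(a,z_1-z_2)b,z_2)$ on the appropriate domain. This uses the HLZ convergence and associativity theorems for $P(z)$-tensor products. Locality $\cM_{A,M}=\Id$ is equivalent to $Y_M$ having only integral powers of $z$ and no logarithms, because the monodromy is $e^{2\pi i}$-transport. Under that condition, associativity plus commutativity yields the full Jacobi identity, so local $A$-modules are exactly $A$-modules in the vertex algebra sense lying in $\cC$.

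Finally, for two local modules, $A$-intertwining operators of type $\binom{W}{M\ N}$ are $V$-intertwining operators that are compatible with both actions, which is precisely what factors through the coequalizer defining $M\boxtimes_A N$. So $\boxtimes_A$ represents the vertex-algebraic fusion of $A$-modules. Transporting associators and braidings along this identification gives the claimed braided tensor equivalence. I expect the analytic translation between the categorical associativity and braiding and the Jacobi identity (convergence and analytic continuation of products and iterates) to be the main obstacle.
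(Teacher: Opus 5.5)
\textbf{Assessment.} Your outline follows the same route as \cite{CKM}, which the paper quotes without proof: the Pareigis/Kirillov--Ostrik coequalizer for $\cC_A$, induction as a tensor functor, and for (3) the dictionary of \cite{HKL, CKM} between morphisms $A\boxtimes M\to M$ and intertwining operators of type $\binom{M}{A\ M}$, followed by identifying $\boxtimes_A$ with $A$-intertwining operators. There is, however, a concrete error in how you treat locality, and it enters all three parts.

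\textbf{The error.} You take locality to mean $\cM_{A,M}=\Id$, and you assert that commutativity gives $\cM_{A,A}=\Id$. The correct condition, the one in the paper, is $m_M\circ\cM_{A,M}=m_M$. Commutativity only yields $m\circ\cM_{A,A}=m\circ\cR_{A,A}\circ\cR_{A,A}=m$.

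The monodromy $\cM_{A,A}$ itself is usually not the identity. Since $A$ is $\Z$-graded, \eqref{eq: balancing axiom} shows that $\cM_{A,A}$ acts on a summand of $A\boxtimes A$ by $e^{2\pi i h}$, where $h$ is that summand's conformal weight. Such summands need not lie in $A$. For instance, take $A=\cdo{G}{\kappa}$ with $\kk\notin\Q$ and $\lambda\neq 0$. Then $(\weyl^\kk_\lambda\otimes\weyl^{\kk_o}_\lambda)\boxtimes(\weyl^\kk_{\lambda^*}\otimes\weyl^{\kk_o}_{\lambda^*})$ contains $\weyl^\kk_0\otimes\weyl^{\kk_o}_{\lambda+\lambda^*}$, whose conformal weight is irrational.

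Consequences of your definition:
\begin{enumerate}
\item The unit $A$ would not belong to your $\cC_A^{loc}$.
\item Your criterion in (3), that locality holds iff $Y_M$ is single-valued, already fails for $M=A$, since $Y_A$ is single-valued.
\end{enumerate}

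\textbf{The repair.} It leaves your strategy intact: replace $\cM_{A,M}=\Id$ by $m_M\circ\cM_{A,M}=m_M$ for $A$-modules. The condition $\cM_{A,M}=\Id$ belongs only to the definition of $\cC^0\subset\cC$.
\begin{enumerate}
\item In (1), the correct condition says exactly that the two right actions $m_M\circ\cR_{M,A}$ and $m_M\circ\cR_{A,M}^{-1}$ coincide. That is what makes $\cR_{M,N}$ descend to $\boxtimes_A$.
\item In (2), $\cM_{A,M}=\Id$ and the hexagon axioms give $\cM_{A,A\boxtimes M}=\cA_{A,A,M}^{-1}\circ(\cM_{A,A}\boxtimes\Id_M)\circ\cA_{A,A,M}$. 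Hence $m_{\cF_A(M)}\circ\cM_{A,A\boxtimes M}=((m\circ\cM_{A,A})\boxtimes\Id_M)\circ\cA_{A,A,M}=m_{\cF_A(M)}$.
\item In (3), $m_M\circ\cM_{A,M}\circ\mathcal{Y}_{\boxtimes}(a,z)$ is the continuation of $Y_M(a,z)$ once around $z=0$, where $\mathcal{Y}_{\boxtimes}$ is the universal intertwining operator of type $\binom{A\boxtimes M}{A\ M}$. Since its coefficients span $A\boxtimes M$, the condition $m_M\circ\cM_{A,M}=m_M$ is equivalent to $Y_M$ having only integral powers and no logarithms. This is the criterion used in \cite{HKL, CKM}.
\end{enumerate}

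\textbf{Smaller points.}
\begin{enumerate}
\item Your coequalizer does not typecheck. Drop the rightmost $\cA$ and use $\cR_{M,A}^{-1}\boxtimes\Id_N$ in place of $\cR_{A,M}^{-1}\boxtimes\Id_N$.
\item Right exactness of $M_1\boxtimes-$ does not come from exactness in the target. It comes from the fact that $M_2\mapsto I_V\binom{W}{M_1\ M_2}$ turns cokernels, computed as quotient modules, into kernels.
\item In (3), merely transporting the associator and braiding would make the statement tautological. The content of \cite{CKM} is that, under your identification of morphisms out of $M\boxtimes_A N$ with $A$-intertwining operators, the descended associativity and braiding agree with those defined through products, iterates and $\Omega_1$ of $A$-intertwining operators.
\end{enumerate}
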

This theorem is generalized to direct limit completions of $\cC$ \cite{CMY} and commutative superalgebra objects corresponding to vertex superalgebra extensions \cite{CKM}.

\subsection{\tmath{$C_1$}-cofinite modules}
Typically, we obtain a braided tensor category $\cC$ of $V$-modules by considering $C_1$-cofinite $V$-modules. Recall that a $V$-module $M$ is called $C_1$-cofinite if it satisfies $\dim M/V^+_{(-1)}M<\infty$, where $V^+=\bigoplus_{\Delta>0}V_\Delta$.
We denote by $V\mod_{C_1}$ the category of $C_1$-cofinite $V$-modules. 
Note that $V\mod_{C_1}$ is closed under taking quotients and extensions, but might not be under taking submodules, i.e., $V\mod_{C_1}$ is just additive.

\begin{ex}[\cite{KL}]\label{ex: affine Kazhdan-Lusztig} 
\textup{For the affine vertex algebras $V^\kk(\g)$ at levels $\kk\in \C\backslash \Q$, simple modules $M$ are $C_1$-cofinite if and only if $M\simeq \weyl_\lambda^\kk$ for some $\lambda \in P_+$. Since $\mathrm{Ext}^1(\weyl_\lambda^\kk,\weyl_\mu^\kk)=0$ for $\lambda,\mu \in P_+$ at $\kk\in \C\backslash \Q$, $V^\kk(\g)\mod_{C_1}$ is abelian and agrees with the Kazhdan-Lusztig category $\KL^\kk(\g)$. This category is semisimple and rigid. The dual of $\weyl_\lambda^\kk$ is $\weyl_{\lambda^*}^\kk$ which is induced by the dual $\g$-module $L(\lambda^*)$ of $L(\lambda)$.
Moreover, the fusion rules 
$$\weyl_\lambda^\kk\boxtimes \weyl_\mu^\kk\simeq \bigoplus_{\nu\in P_+}c_{\lambda,\mu}^\nu \weyl_\nu^\kk$$
agree with the tensor product decomposition of $\g$-modules: 
$$L(\lambda)\otimes L(\mu)\simeq \bigoplus_{\nu \in P_+} c_{\lambda,\mu}^\nu L(\nu).$$
}\end{ex}

\begin{theorem}[{\cite[Theorem 1.2.]{H1}}] \label{thm: Huang C1}
The category $V\mod_{C_1}$ of $C_1$-cofinite modules over simple conformal vertex algebras $V$ satisfies the associativity of intertwining operators. 
In particular, $V\mod_{C_1}$ has a braided monoidal category structure with a twist.
\end{theorem}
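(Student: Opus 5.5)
The plan is to follow the Huang--Lepowsky--Zhang framework \cite{HLZ0}-\cite{HLZ2}. There, a braided monoidal structure with twist on a category $\cC$ of lower-bounded generalized $V$-modules follows formally from three inputs. First, $\cC$ must be closed under the $P(z)$-tensor products. Second, products and iterates of intertwining operators among objects of $\cC$ must converge absolutely in the appropriate regions. Third, these series must satisfy the \emph{convergence and extension property}, so that products can be analytically continued to iterates and vice versa, which is the associativity of intertwining operators. So I would establish these three properties for $\cC=V\mod_{C_1}$ and then invoke the HLZ machinery to produce $\boxtimes$, $\cA$, $\cR$ (via $\Omega_1$ in \eqref{eq: skew symmetry}) and the twist $\theta=e^{2\pi i L_0}$, with the balancing \eqref{eq: balancing axiom}.

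\emph{Step 1 (differential equations).} For $C_1$-cofinite $M_1,\dots,M_4$ and $w_i\in M_i$, $w_0'\in M_0^*$, I would prove that matrix coefficients $\langle w_0',\mathcal Y_1(w_1,z_1)\mathcal Y_2(w_2,z_2)w_3\rangle$ satisfy a system of linear ODEs with regular singularities at $z_1=0,z_2=0,z_1=z_2,\infty$. The argument mimics Huang's earlier proof for $C_2$-cofinite algebras. Consider the module $W=M_0^*\otimes M_1\otimes M_2\otimes M_3$ over $R=\C[z_1^{\pm1},z_2^{\pm1},(z_1-z_2)^{-1}]$, and the submodule $J$ generated by elements of the form ``$a_{(-1)}$ acting on one slot minus the expansion of its action on the other slots'', with $a\in V^+$. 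Using the $C_1$ condition $\dim M/V^+_{(-1)}M<\infty$ on each $M_i$ and an induction on conformal weight, I would show $W/J$ is a finitely generated $R$-module. Then the $L_{-1}$-derivative property expresses $\partial_{z_i}$ of a coefficient in terms of coefficients of elements of $W$. Since $W/J$ is finitely generated, some polynomial in $\partial_{z_i}$ annihilates the coefficient modulo $J$, which gives the ODE. A filtration by conformal weight would show the singularities are regular. Absolute convergence of products and iterates, with expansions of the form $z^{\C}[\log z]$, then follows from the Frobenius method.

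\emph{Step 2 (closure and associativity).} I would show that the HLZ $P(z)$-tensor product of two $C_1$-cofinite modules, defined as the submodule of $(M_1\otimes M_2)^*$ satisfying the compatibility and lower-boundedness conditions, is again lower-bounded and $C_1$-cofinite. The intended route is to exhibit it as generated by the images of $M_1\otimes M_2$ under the universal intertwining operator, and to use a Miyamoto/Nagatomo--Tsuchiya style estimate: $(M_1\boxtimes M_2)/V^+_{(-1)}$ is spanned by the images of $M_1/V^+_{(-1)}M_1\otimes M_2/V^+_{(-1)}M_2$. Given closure, the convergence and extension property follows from Step 1, because the solution spaces of the ODEs near $z_1=z_2$ are spanned by expansions of iterates of intertwining operators. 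Associativity then comes from matching the two bases of the solution space.

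The main obstacle is that $V\mod_{C_1}$ is only additive, not abelian, and its objects need not have finite length. HLZ's construction of $\boxtimes_{P(z)}$, and their use of the category's properties in proving associativity, assume an abelian category closed under submodules, or at least one in which the relevant generated submodules stay inside the category. To handle this, I would work with the ``generated by the image'' version of $\boxtimes_{P(z)}$ throughout and verify each HLZ lemma only for modules that are quotients of objects already known to lie in $\cC$. This is consistent because $V\mod_{C_1}$ is closed under quotients and extensions. I would first try to check that the finite-generation estimate of Step 1 survives without any finite-length hypothesis, since this is precisely where Huang's earlier $C_2$ arguments used stronger finiteness.
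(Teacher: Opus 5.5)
The paper gives no argument for this statement; it is quoted from Huang \cite{H1}. Your outline therefore has to stand on its own, and it has a genuine gap exactly where you invoke the HLZ machinery. HLZ construct $M_1\boxtimes_{P(z)}M_2$ as the contragredient of a subspace of $(M_1\otimes M_2)^*$, and they derive associativity from the convergence and extension property. Both steps are carried out under the standing hypothesis that $\cC$ is closed under contragredients. There is no reason for $V\mod_{C_1}$ to have this property: a lower-bounded $C_1$-cofinite module is finitely generated, but its contragredient need not be. Your proposed repair addresses only the failure of $V\mod_{C_1}$ to be abelian, not this.

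The same problem already appears in Step 1. Your argument that $W/J$ is finitely generated over $R$ would need the $C_1$ condition on the slot $M_0^*$, and you do not have it. That step can be rescued: $w_0'$ is only ever acted on by the weight-lowering adjoints of the modes $a_{(-k-1)}$. So for fixed $w_0'$ you can work inside the finite-dimensional span of vectors of $M_0^*$ of weight at most that of $w_0'$. You need to say this explicitly, because it is what makes the argument independent of $M_0^*$.

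Beyond this, Step 2 does not actually produce associativity. The claim that the local solutions of the differential equations near $z_1=z_2$ are spanned by expansions of iterates is precisely the statement to be proved. What you need instead is the following:
\begin{enumerate}
\item From the expansion of the analytically continued product in powers of $z_1-z_2$, construct an intermediate module $M_5$ together with intertwining operators of types $\binom{M_5}{M_1\,M_2}$ and $\binom{M_0}{M_5\,M_3}$ that realize the product as an iterate.
\item Prove that $M_5$ is lower-bounded and $C_1$-cofinite, so that it lies in $\cC$.
\item Prove the converse direction as well, turning iterates into products with intermediate module in $\cC$.
\end{enumerate}

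Likewise, the codimension estimate only controls a surjective image that is already given. It does not show that a tensor product with the universal property exists in $V\mod_{C_1}$. Every surjective image of an intertwining operator on $M_1\otimes M_2$ would be a quotient of $M_1\boxtimes M_2$. So you first need a uniform lower bound on the conformal weights of all such images. The estimate bounds $\dim M_3/V^+_{(-1)}M_3$ but says nothing about where those weights sit.

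These are the substantive points of Huang's theorem, and your plan leaves all of them open.
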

Note that the algebras of chiral differential operators $\cdo{G}{\kk}$ at levels $\kk \in \C \backslash \Q$ in \S \ref{sec: CDOs} are conformal extensions of $V^\kk(\g)\otimes V^{\kk_o}(\g)$. However, it is not a commutative algebra object in the Deligne tensor product $\KL^\kk(\g)\boxtimes \KL^{\kk_o}(\g)$, but rather in the direct limit completion $\Ind(\KL^\kk(\g)\boxtimes \KL^{\kk_o}(\g))$. 
In \cite{CMY}, it was proven that given a vertex (and thus braided) tensor category of $V$-modules $\sC$, the direct limit completion $\Ind(\sC)$ in the category of (arbitrary) $V$-modules has a natural structure of a vertex (and thus braided) tensor category. 
As Theorem \ref{thm: Huang C1} indicates, the assumption on $\sC$ of being an abelian category is not necessary.
In this generality, let $\mathrm{Ind}(V\mod_{C_1})$ denote the subcategory of $V\mod$ consisting of unions of $C_1$-cofinite $V$-modules.

\begin{theorem}[{\cite[Theorem 6.10]{MN}}]
The completion $\mathrm{Ind}(V\mod_{C_1})$ of $C_1$-cofinite modules over simple conformal vertex algebras $V$ is additive, co-complete, and has a braided monoidal category structure with a twist so that the embedding 
$$V\mod_{C_1} \rightarrow \mathrm{Ind}(V\mod_{C_1})$$
is a braided monoidal functor preserving twists.
\end{theorem}

\begin{comment}
$C_1$-codimension also gives us a bound on fusion rules.
\begin{theorem}\cite[Theorem 3.5.]{H1} \label{thm:dimC1} Let $M_1, M_2$ be $\mathbb N$-gradable weak $V$-modules. Then for a weak surjective
product $M_3$ of $M_1$ and $M_2$,
\[
\text{dim}(M_3/C_1(M_3))  \leq \text{dim}(M_1/C_1(M_1))\text{dim}(M_2/C_1(M_2)).
\]
\end{theorem}
\end{comment}

\subsection{Mirror equivalence}

In order to construct an equivalence between $\KL^k(\g)$ and $\KL^k_f(\g)$, we will use the mirror equivalence \cite{CKM2, McR} obtained from kernel objects, which will be realized as conformal vertex algebra extensions. 

Let $U$ and $V$ be simple, self-contragredient, conformal vertex algebras and $A$ a simple conformal vertex algebra extension of $U\otimes V$.
Suppose the condition (M):
\begin{itemize}
    \item[(M1)] $U$ and $V$ form a dual pair in $A$, i.e.,
    $$\Com(U,A)=V,\quad \Com(V,A)=U.$$
    \item[(M2)] $A$ is semisimple as a $U\otimes V$-module:
    \begin{align*}
    A\simeq \bigoplus_{i \in I} U_i\otimes V_i,
\end{align*}
where $U_i$ is a distinct simple $V$-module with finite-semisimple $V$-modules $V_i$. 
Let $0\in I$ and $U_0=U$, $V_0=V$.
    \item[(M3)] The $U$-modules $U_i$ $(i\in I)$ lie in a braided tensor category $\cC_U$ of $V$-modules which is finite-semisimple, rigid and closed under contragredients.
    \item[(M4)] The $V$-modules $V_i$ $(i\in I)$ lie in a braided tensor category $\cC_V$ which is closed under contragredients.
\end{itemize}

Let us denote by $\cC_{U\otimes V}$ the category of $U\otimes V$-modules consisting of finite direct sums of $M\otimes N$ for $M \in \cC_U$ and $N \in \cC_V$.
Let 
$$\cC_U(A) \subset \cC_U,\quad \cC_V(A)\subset \cC_V$$
the subcategories consisting of finite direct sums of $U_i$ and $V_i$ ($i \in I$), respectively.
\begin{theorem}[{\cite{McR}}]\label{thm:main_thm}
Let $U\otimes V \subset A$ be a conformal extension of simple vertex algebras as above satisfying the condition \textup{(M)}. 
If every intertwining operator 
$$\mathcal Y(\cdot, z)\colon M_1\times M_2 \rightarrow M_3\{z\}[\log z]$$
of $U\otimes V$-modules $M_1, M_2 \in \cC_{U\otimes V}$ and $M_3 \in \mathrm{Ind}(\cC_{U\otimes V})$ factors through a submodule $M_3'\subset M_3$ in $\cC_{U\otimes V}$, then $\cC_U(A)$ is a braided tensor subcategory of $\cC_U$ and $\cC_V(A)$ is a semisimple ribbon braided tensor subcategory of $\cC_V$ with distinct simple objects $V_i$ $(i \in I)$. Moreover, there is a braid-reversed tensor equivalence 
$$\tau: \cC_U(A)\rightarrow\cC_V(A),\quad U_i \mapsto V_i^*.$$
\end{theorem}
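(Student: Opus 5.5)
The plan is to realize everything inside the category of $A$-modules, using $A$ as a commutative algebra in the direct limit completion $\mathrm{Ind}(\cC_{U\otimes V})$ and inducing from the two "sides" $U$ and $V$ separately. First, the factorization hypothesis on intertwining operators lets us show that $\cC_{U\otimes V}$ is closed under the fusion product of $\mathrm{Ind}(\cC_{U\otimes V})$, with
\[
(M_1\otimes N_1)\boxtimes(M_2\otimes N_2)\simeq (M_1\boxtimes M_2)\otimes(N_1\boxtimes N_2).
\]
This identifies $\cC_{U\otimes V}$ with the Deligne product $\cC_U\boxtimes\cC_V$ as braided tensor categories: intertwining operators of the tensor product factor as products of intertwining operators for $U$ and for $V$. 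Since $A=\bigoplus_i U_i\otimes V_i$ is a simple conformal extension, it is a commutative algebra in $\mathrm{Ind}(\cC_{U\otimes V})$ by \cite{CKM, HKL, CMY}. By Theorem \ref{sum1} we then have tensor induction functors
\[
\Phi_U=\cF_A(-\otimes V)\colon \cC_U\to \cC_A,\qquad \Phi_V=\cF_A(U\otimes -)\colon \cC_V\to \cC_A .
\]

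The second step is to compute morphisms via Frobenius reciprocity \eqref{eqn:fro-rec} together with the dual pair condition (M1). Condition (M1) says exactly that $\Hom_V(V,V_i)=\delta_{i,0}\C$ and $\Hom_U(U,U_i)=\delta_{i,0}\C$. For example,
\[
\Hom_{\cC_A}(\Phi_U(M),\Phi_U(M'))\simeq \bigoplus_i\Hom_U(M,U_i\boxtimes M')\otimes \Hom_V(V,V_i)=\Hom_U(M,M'),
\]
so $\Phi_U$ is fully faithful on $\cC_U$, and similarly $\Phi_V$ is fully faithful on $\cC_V(A)$. The heart of the argument is a comparison of the two images, namely an isomorphism of (non-local) $A$-modules
\[
\Phi_U(U_i)\simeq \Phi_V(V_i^*).
\]
To produce it, we first use rigidity of $\cC_U$ (M3) to write $\Hom_{U\otimes V}(U_i\otimes V, A\boxtimes(U\otimes N))\simeq \Hom_V(V, V_{i^*}\boxtimes N)$, after relabeling $U_i^*=U_{i^*}$. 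This shows that the $U_i$-isotypic part of the multiplication furnishes a nonzero $A$-module map $\Phi_U(U_i)\to \Phi_V(V_i^{\vee})$ for a suitable contragredient $V_i^\vee$, where (M4) guarantees that $V_i^\vee$ lies in $\cC_V$. We then show this map is an isomorphism, using simplicity of $\Phi_U(U_i)$ (which follows from full faithfulness and simplicity of $U_i$) and the fact that $\Phi_V(V_i^\vee)$ is generated by $U\otimes V_i^\vee$.

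Transporting along this isomorphism and the full faithfulness, several consequences follow in turn:
\begin{enumerate}[leftmargin=*]
\item The $V_i$ are simple and pairwise distinct.
\item $\cC_V(A)$ is semisimple and closed under fusion.
\item Each $V_i$ is rigid, with dual $V_i^*\simeq V_i^\vee$, because rigidity transports from $\cC_U$ through the tensor functors $\Phi_U$ and $\Phi_V$.
\item $\cC_V(A)$ carries a ribbon structure, with twist $e^{2\pi i L_0}$.
\end{enumerate}
Defining $\tau=\Phi_V^{-1}\circ\Phi_U$ on $\cC_U(A)$ gives a tensor equivalence $U_i\mapsto V_i^*$, since both $\Phi_U$ and $\Phi_V$ are tensor functors by Theorem \ref{sum1}.

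For the braiding, the point is that objects of $\cC_U\boxtimes \one$ and $\one\boxtimes \cC_V$ centralize each other, so their monodromy is trivial. Hence, on an object $\Phi_U(M)\simeq\Phi_V(\tau M)$, the braiding inherited from $\cC_U$ must coincide with the inverse braiding inherited from $\cC_V$. This compatibility can be checked on the generating subspace $M\otimes V$ using the balancing axiom \eqref{eq: balancing axiom}, which forces $\tau$ to be braid-reversing.

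I expect the main obstacle to be the comparison isomorphism $\Phi_U(U_i)\simeq\Phi_V(V_i^*)$ together with the rigidity of $V_i$. Both require controlling $A\boxtimes(U\otimes V_i)$ inside the Ind-category, and this is exactly where the factorization hypothesis on intertwining operators is essential.
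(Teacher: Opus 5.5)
The paper gives no proof of this statement; it is quoted from \cite{McR}. Your framework of inducing from both sides into $A$-modules and using Frobenius reciprocity with (M1) is reasonable. However, there is a genuine gap at exactly the step you call the heart, the isomorphism $\Phi_U(U_i)\simeq\Phi_V(V_i^*)$.

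First, the map you propose goes the wrong way. By Frobenius reciprocity \eqref{eqn:fro-rec} and $\Hom_U(U_i,U_j)=\delta_{ij}\C$,
\[
\Hom_{\cC_A}\bigl(\Phi_U(U_i),\Phi_V(N)\bigr)\simeq\Hom_{U\otimes V}\Bigl(U_i\otimes V,\ \bigoplus_j U_j\otimes (V_j\boxtimes N)\Bigr)\simeq\Hom_V(V,V_i\boxtimes N).
\]
So a nonzero map $\Phi_U(U_i)\to\Phi_V(V_i^\vee)$ is the same thing as a nonzero coevaluation-type morphism $V\to V_i\boxtimes V_i^\vee$. Rigidity of $\cC_U$ plays no role in this Hom space. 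The multiplication of $A$ only produces evaluation-type maps $V_i\boxtimes V_{i^*}\to V$. And rigidity of $V_i$ is one of the conclusions (your item (3)). So this step is circular. What rigidity of $\cC_U$ does give is the opposite direction: $\Hom_{\cC_A}(\Phi_V(N),\Phi_U(U_i))\simeq\Hom_V(N,V_{i^*})$ with $U_{i^*}\simeq U_i^*$. This yields a canonical map $\phi_i\colon\Phi_V(V_{i^*})\to\Phi_U(U_i)$, which is an isomorphism on $U$-isotypic components.

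Second, and more seriously, your argument that such a map is an isomorphism rests on simplicity of $\Phi_U(U_i)$, and this does not follow from full faithfulness. Full faithfulness only gives $\mathrm{End}_{\cC_A}(\Phi_U(U_i))=\C$. It says nothing about $A$-submodules that are not induced, and since $\cC_V$ is neither semisimple nor rigid, $\cC_A$ is not known to be semisimple. Indeed your argument never uses that $A$ is simple, and it cannot work without it. Take a square-zero extension $U\otimes V\oplus U_1\otimes V_1$, with the product on $(U_1\otimes V_1)^{\boxtimes 2}$ equal to zero. Then (M1) still makes $\Phi_U$ fully faithful, yet $(U_1\otimes V_1)\boxtimes(U_1\otimes V)$ is a proper nonzero $A$-submodule of $\Phi_U(U_1)$.

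What must actually be shown is that $\ker\phi_i$ and $\mathrm{coker}\,\phi_i$ vanish; both have no $U$-isotypic component, since taking $U$-isotypic parts is exact over the semisimple $\cC_U$. This is the substantive part of the statement, and your items (1)--(3) and the definition of $\tau$ all hinge on it.

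Finally, the braiding step is not right as stated. Non-local $A$-modules do not form a braided category. The balancing axiom \eqref{eq: balancing axiom} involves only double braidings and twists, so it cannot single out the reversed braiding. One has to show that the isomorphisms $\Phi_U(M)\simeq\Phi_V(\tau M)$ intertwine two half-braidings on induced modules: the one coming from $\cR$ on the $\cC_U$ side and the one coming from $\cR^{-1}$ on the $\cC_V$ side. This uses $\cM_{X\otimes V,\,U\otimes Y}=\Id$.
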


\section{Principal nilpotent case}
\subsection{\tmath{$C_1$}-cofiniteness}
\begin{proposition}\label{prop:C_1}
Let $\g$ be a simple Lie algebra and $f$ a nilpotent element.
The $\W^\kk(\g,f)$-modules $\weyl^\kk_{\lambda,f}$ with $\lambda \in P_+$ are $C_1$-cofinite for all levels $\kk\in \C$.
\end{proposition}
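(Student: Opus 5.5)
Our approach is to read off $C_1$-cofiniteness from a description of $\weyl^\kk_{\lambda,f}$ as a module generated by a finite-dimensional space under the strong generators of $\W^\kk(\g,f)$, acting only through negative modes. The key input is the Kac--Wakimoto structure theory of the BRST reduction of induced modules \cite{KW}. The complex $C_f^\bullet(\weyl^\kk(\overline{M}))$ decomposes as a tensor product of an acyclic part and a part whose cohomology is concentrated in degree zero. This gives a filtration on $H^0_f(\weyl^\kk_\lambda)$ whose associated graded is isomorphic, as a module over $\mathrm{gr}\,\W^\kk(\g,f)\simeq \C[J_\infty \mathcal{S}_f]$, to a free module generated by a finite-dimensional space. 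Here $\mathcal S_f=f+\g^e$ is the Slodowy slice. Concretely, $\mathrm{gr}\,\weyl^\kk_\lambda\simeq \C[J_\infty\g^*]\otimes L(\lambda)$ as a $\C[J_\infty \g^*]$-module. Reduction then yields $\C[J_\infty\mathcal S_f]\otimes H^0_{\text{fin}}(L(\lambda))$, where $H^0_{\text{fin}}(L(\lambda))$ is a finite-dimensional space: it is essentially the reduction of $L(\lambda)$ with respect to the finite-dimensional algebra $\g_{>0}$, or more crudely a subquotient of $L(\lambda)\otimes(\text{finite ghosts})$.

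The plan has three steps.

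\textbf{Step 1.} Recall that $\W^\kk(\g,f)$ is strongly generated by fields $J^{(x_i)}(z)$, one for each element $x_i$ of a basis of $\g^f$, of conformal weights $1+j_i>0$ \cite{KW}. Hence $V^+_{(-1)}M$ contains $J^{(x_i)}_{(-n)}M$ for all $n\ge 1$. Using the $L_{-1}$-derivative property, $(T^{n}a)_{(-1)}$ is a multiple of $a_{(-n-1)}$, so every negative mode $a_{(-n)}$, $n\geq 1$, of a positive-weight generator acts into $V^+_{(-1)}M$.

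\textbf{Step 2.} Show that $\weyl^\kk_{\lambda,f}$ is spanned by monomials in the negative modes $J^{(x_i)}_{(-n)}$, $n\geq1$, applied to a finite-dimensional subspace $U$. We take $U$ to be the image of the lowest weight space $L(\lambda)$, or more precisely a finite sum of low conformal weight spaces. This comes from the free-module description of the associated graded above, lifted through the filtration by a standard PBW-type induction on filtration degree and conformal weight. It also uses that the weight spaces are finite-dimensional and bounded below, which holds since $\lambda\in P_+$ and the good grading makes conformal weights bounded below.

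\textbf{Step 3.} Conclude that $\weyl^\kk_{\lambda,f}=U+V^+_{(-1)}\weyl^\kk_{\lambda,f}$, so $\dim \weyl^\kk_{\lambda,f}/V^+_{(-1)}\weyl^\kk_{\lambda,f}\le\dim U<\infty$.

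None of this depends on $\kk$, since the Kac--Wakimoto vanishing and the structure of the associated graded hold at every level. The only level-dependent ingredient is the conformal vector at the critical level, which can be replaced there by the Dynkin grading operator; alternatively one can phrase $C_1$ with respect to that grading.

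The main obstacle is Step 2. One must verify that the generating space is hit by negative modes only, and not by zero or positive modes of the generators. This requires care: the modes $J_{(0)}$ of weight-one generators act on the top space, but they preserve conformal weight, so finitely many weight spaces still suffice. Our first attempt would be the graded argument: the associated graded module is $\C[J_\infty\mathcal S_f]\otimes U_0$ with $U_0$ finite-dimensional. The positive-degree part of $\C[J_\infty\mathcal S_f]$, which corresponds to $C_2$-type elements and derivatives, multiplied with $U_0$ covers everything outside $U_0$. Since $C_2$-type products $a_{(-2)}$ are in particular $C_1$-type, the statement follows from Step 1.
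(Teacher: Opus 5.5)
Your plan is correct in outline. It rests on the same fact as the paper's proof: $\weyl^\kk_{\lambda,f}$ is spanned by the modes $J^{(x_i)}_{(-n)}$, $n\ge 1$, of the strong generators acting on a finite-dimensional space. You derive that fact from a different key lemma, however.

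The paper reads it off directly from Kac--Wakimoto. $\weyl^\kk_{\lambda,f}$ is the kernel of $d$ on the subcomplex $\mathscr{C}^{-,\bullet}\otimes L(\lambda)$, which is built from negative modes only. The KW spectral sequence, with $E_2\simeq \W^\kk(\g,f)\otimes L(\lambda)$, then gives the sharp statement $\weyl^\kk_{\lambda,f}/\W^\kk(\g,f)^+_{(-1)}\weyl^\kk_{\lambda,f}\simeq L(\lambda)$. This sharp form is reused in Proposition~\ref{prop: BRST reduction for intertwining operators}.

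You instead go through an arc-space identification $\mathrm{gr}\,\weyl^\kk_{\lambda,f}\simeq \C[J_\infty\mathcal S_f]\otimes(\text{finite-dimensional})$. That identification is not in \cite{KW} as such. It needs an extra fact: for these modules, BRST cohomology commutes with passing to the associated graded of a filtration compatible with $(-1)$-products (Li's or Kazhdan's filtration). This is an additional Arakawa-type argument.

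In exchange, your route is more conceptual, and you only need its $C_2$-level. Set $R_M=M/C_2(M)$. Since $C_2(M)\subset C_1(M)$ and $C_1(M)/C_2(M)=R_{\W}^+R_M$, one has $M/C_1(M)\simeq R_M/R_{\W}^+R_M$. So finite generation of $R_M$ over $R_{\W}\simeq\C[\mathcal S_f]$ already gives $C_1$-cofiniteness, with no lifting through the full filtration.

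There are two inaccuracies to fix; neither is fatal.

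\emph{The finite-dimensional space.} It is $L(\lambda)$ itself, not a $\g_{>0}$-reduction of it. Twisted by $\chi$, such a reduction of a finite-dimensional module would in fact be zero. It is also not the lowest conformal weight space: the $\W$-conformal weight involves the grading operator, which spreads $L(\lambda)$ over several weights. Your fallback, ``a finite sum of low conformal weight spaces'', covers this.

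\emph{The delicate modes in Step 2.} These are not the weight-preserving $J_{(0)}$ of weight-one generators. They are the weight-raising modes $J_{(j)}$, $0\le j\le \Delta_J-2$, of generators of weight $\Delta_J\ge 2$, and these are not of $C_1$-type. For example, take $\g=\sll_2$, $f$ principal and $\kk\notin\Q$. Then $\weyl^\kk_{\lambda,f}$ modulo $C_1$ is spanned by $v,L_{-1}v,\dots,L_{-1}^{r-1}v$ with $r=\dim L(\lambda)$, and $L_{-1}=\omega_{(0)}$. This is exactly why you need a description using $(-1)$-products only. Both your graded argument and the KW subcomplex provide such a description.
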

\begin{proof}
The assertion is essentially shown in the proof of \cite[Theorem 6.2]{KW}.
Indeed, $\weyl^\kk_{\lambda,f}$ is realized as the kernel of the differential $d$ of the subcomplex $C_{\lambda,f}^{-,\bullet}:=\mathscr{C}^{-,\bullet}\otimes L(\lambda)$ in the BRST complex $C_f^\bullet(\weyl^\kk_{\lambda})$ (see Section \ref{sec: W-algebras}) with $P_0=L(\lambda)$ in \emph{loc.\ cit.}. The complex $C_{\lambda,f}^{-,\bullet}$ has a bicomplex structure $d=d_1^P+d_2^P$ with a convergent spectral sequence such that $E_2^{p,q}=H^p(H^q(C_{\lambda,f}^{-,\bullet},d_1^P),d_2^P)\simeq \delta_{p,0},\delta_{q,0}\W^{\kk}(\g,f)\otimes L(\lambda)$ by the proof of \cite[Theorem 4.1]{KW}. 
Now, the assertion follows from  $\weyl^\kk_{\lambda,f}/\W^\kk(\g,f)^+_{(-1)}\weyl^\kk_{\lambda,f}\simeq L(\lambda)$ as vector spaces.
\end{proof}
\begin{remark}
\textup{The same proof applies for  $\W$-superalgebras associated with basic-classical Lie superalgebras.}
\end{remark}
\begin{proposition}\label{prop: BRST reduction for intertwining operators}
Let $\g$ be a simple Lie algebra and $f$ a nilpotent element.
The BRST reduction \eqref{eq: BRST reduction} is naturally extended to a functorial homomorphism between the spaces of intertwining operators. 
\begin{align*}
    H_f^0\colon I_{V^\kk(\g)}\binom{M_3}{M_1\, M_2} \rightarrow I_{\W^\kk(\g,f)}\binom{H_f^0(M_3)}{H_f^0(M_1)\ H_f^0(M_2)},
\end{align*}
which is injective on $\KL^\kk(\g)$ at levels $\kk \in \C \backslash \Q$.
\end{proposition}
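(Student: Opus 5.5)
The plan is to construct the map at the level of BRST complexes and then deduce injectivity from the structure of the lowest-weight spaces.

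\textbf{Construction.} Given $\mathcal Y\in I_{V^\kk(\g)}\binom{M_3}{M_1\,M_2}$, form the operator
\[
\widetilde{\mathcal Y}(u\otimes a,z)(v\otimes b)=\mathcal Y(u,z)v\otimes Y_{F}(a,z)b
\]
on $C_f(M_1)\times C_f(M_2)\to C_f(M_3)\{z\}[\log z]$. Here $Y_F$ denotes the vertex operator of the ghost and neutral fermion vertex superalgebra, with $a\in F_\chi\otimes\bigwedge^{\frac{\infty}{2}+\bullet}$ acting on $b$.

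\begin{enumerate}[leftmargin=*]
\item Since the differential $d$ of $C_f$ is the zero-mode of an odd field in the vertex superalgebra $C_f(V^\kk(\g))$, the Jacobi identity gives the standard commutator formula $[d,\widetilde{\mathcal Y}(w,z)]=\widetilde{\mathcal Y}(dw,z)$. Hence $\widetilde{\mathcal Y}$ descends to cohomology.
\item Restricting to degree $0$ yields an intertwining operator of type $\binom{H^0_f(M_3)}{H^0_f(M_1)\ H^0_f(M_2)}$ for the $\W^\kk(\g,f)$-action, because $\W^\kk(\g,f)=H^0(C_f(V^\kk(\g)))$ acts through $C_f(V^\kk(\g))$.
\item The $L_{-1}$-derivative property uses the conformal vector $\omega_{\g,f}$. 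This differs from $\omega_\g+\omega_{gh}$ by $\partial x$ for a Cartan element $x$ and by $d$-exact terms. The $\partial x$ correction changes only the conformal weight grading, which we absorb by the usual $z^{x_0}$ conjugation as in \cite{KW}.
\end{enumerate}

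Functoriality in the $M_i$ is then clear.

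\textbf{Injectivity.} By semisimplicity of $\KL^\kk(\g)$ at $\kk\in\C\backslash\Q$ (Example \ref{ex: affine Kazhdan-Lusztig}) and additivity of both sides, it suffices to treat $M_i=\weyl^\kk_{\lambda_i}$. A nonzero $\mathcal Y$ restricts to a nonzero map on the top spaces $L(\lambda_1)\otimes L(\lambda_2)\to M_3$ modulo lower terms, by Proposition \ref{prop:intertwiner}, since the Weyl modules are generated by $L(\lambda_i)$.

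In the proof of Proposition \ref{prop:C_1}, $H^0_f(\weyl^\kk_{\lambda})$ is identified with the kernel of $d$ on the subcomplex $C^{-}_{\lambda,f}=\mathscr C^-\otimes L(\lambda)$. Moreover $L(\lambda)$ embeds into $H^0_f(\weyl^\kk_\lambda)$ as a generating subspace modulo $\W^\kk(\g,f)^+_{(-1)}$, through the leading term of the spectral sequence. The plan is to show that, for $u\in L(\lambda_1)$ and $v\in L(\lambda_2)$ with cocycle lifts $\hat u,\hat v$, the leading part of $\widetilde{\mathcal Y}(\hat u,z)\hat v$ is $\mathcal Y(u,z)v\otimes\mathbf 1$ plus terms of strictly higher filtration degree. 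This should hold because the grading $d=d_1^P+d_2^P$ is compatible with products. A nonvanishing $\mathcal Y(u,z)v$ would then give a nonzero class in $H^0_f(M_3)$, since $M_3$ in $\KL^\kk$ is again a sum of Weyl modules whose reduction is computed by the same spectral sequence.

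\textbf{Main obstacle.} I expect the hard step to be making this leading-term argument rigorous. The nonzero vector $\mathcal Y(u,z)v$ need not lie in the top space of $M_3$. One must check that its image is not killed in cohomology, i.e.\ is not $d$-exact. My first approach is to reduce to $M_3=\weyl^\kk_\nu$ with $\mathcal Y$ corresponding to a $\g$-intertwiner $L(\lambda_1)\otimes L(\lambda_2)\to L(\nu)$, using the Kazhdan–Lusztig identification of fusion rules in Example \ref{ex: affine Kazhdan-Lusztig}. Then the coefficient of the lowest power of $z$ lands in $L(\nu)\subset H^0_f(\weyl^\kk_\nu)$, where nonvanishing is visible on the $E_2$-page.
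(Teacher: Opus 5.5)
Your proposal follows the same route as the paper. You build $\widetilde{\mathcal Y}=\mathcal Y\otimes Y$ on the BRST complexes and use $[d,\widetilde{\mathcal Y}(w,z)]=\widetilde{\mathcal Y}(dw,z)$ to descend to $H^0_f$. For injectivity you reduce to Weyl modules with $\mathcal Y=\mathcal Y_\iota$ for a nonzero $\iota\in\Hom_\g(L(\lambda)\otimes L(\mu),L(\nu))$, and you detect nonvanishing through the leading $z$-coefficient on $\mathscr C^{-,0}\otimes L(\ast)$ together with $\weyl^\kk_{\ast,f}/\W^\kk(\g,f)^+_{(-1)}\weyl^\kk_{\ast,f}\simeq L(\ast)$.

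One small correction: the $z^{x_0}$ conjugation in your step 3 is unnecessary. Since $(\partial x)_{(0)}=0$, the operator $L_{-1}$ is the same translation operator for $\omega_{\g,f}$ as for $\omega_\g$ plus the ghost/fermion conformal vector, so the $L_{-1}$-derivative property of $\widetilde{\mathcal Y}$ holds directly.
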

\begin{proof}
Let $\mathcal{Y}(\cdot,z)\colon M_1\times M_2 \rightarrow M_3 \{z\}[\log z]$ be an intertwining operator. 
It naturally extends to the BRST complexes
$$\mathcal{Y}(\cdot,z)\otimes Y(\cdot,z)\colon C_f^\bullet(M_1)\times C_f^\bullet(M_2) \rightarrow C_f^\bullet(M_3) \{z\}[\log z]$$
by tensoring with a structure map of vertex (super)algebras appearing in the d.g. vertex algebra $C_f^\bullet(V^\kk(\g))$. 
Thanks to the Jacobi identity of the intertwining operators, $\widetilde{\mathcal{Y}}(\cdot,z):=\mathcal{Y}(\cdot,z)\otimes Y(\cdot,z)$ satisfies $[d,\widetilde{\mathcal{Y}}(u,z)]=\widetilde{\mathcal{Y}}(du,z)$ where $d=Q_{(0)}$ denotes the differential of the BRST complexes, see \cite{KW} for the definition of $Q$. Now, it is straightforward to show that $\widetilde{\mathcal{Y}}(\cdot,z)$ defines an intertwining operator on the cohomologies
$$[\widetilde{\mathcal{Y}}](\cdot,z)\colon H_f^\bullet(M_1)\times H_f^\bullet(M_2) \rightarrow H_f^\bullet(M_3) \{z\}[\log z].$$
This completes the first assertion.
To show the second assertion, we may take $M_1=\weyl^\kk_{\lambda}$, $M_2=\weyl^\kk_{\mu}$ and $M_3=\weyl^\kk_{\nu}$, and consider a non-zero intertwining operator $\mathcal{Y}_\iota(\cdot,z)\in I_{V^\kk(\g)}\binom{M_3}{M_1\ M_2}$ induced from a non-zero homomorphism $\iota\in \Hom_\g(L(\lambda)\otimes L(\mu),L(\nu))$ by using Example \ref{ex: affine Kazhdan-Lusztig}. This induces an intertwining operator $[\widetilde{\mathcal{Y}}_\iota](\cdot,z) \in I_{\W^\kk(\g,f)}\binom{\weyl^\kk_{\nu,f}}{\weyl^\kk_{\lambda,f}\ \weyl^\kk_{\mu,f}}$.
As in the proof of Proposition \ref{prop:C_1}, the cohomologies are realized as the subspaces $\ker d \subset C_{\ast,f}^{-,\bullet}$ for $\ast=\lambda,\mu,\nu$. 
By the shape $C_{\ast,f}^{-,\bullet}=\mathscr{C}^{-,\bullet}\otimes L(\ast)$, the composition 
$$C_{\lambda,f}^{-,0}\times C_{\mu,f}^{-,0} \hookrightarrow C_{f}^0(\weyl^\kk_\lambda)\times C_{f}^0(\mu^\kk_\lambda) \overset{\widetilde{\mathcal{Y}}_\iota}{\longrightarrow} C_{f}^{0}(\weyl^\kk_\nu)\{z\}[\log z]\twoheadrightarrow C_{\nu,f}^{-,0}\{z\}[\log z]$$ 
is clearly non-zero since it restricts to a non-zero homomorphism $\iota\colon L(\lambda)\times L(\mu)\rightarrow L(\nu)$ up to the factor $z^\alpha (\log z)^n$. This further induces a non-zero homomorphism on the cohomology as $\weyl^\kk_{\ast,f}/\W^\kk(\g,f)^+_{(-1)}\weyl^\kk_{\ast,f}\simeq L(\ast)$ as vector spaces for $\ast=\lambda,\mu,\nu$.
This completes the proof.
\end{proof}

From now on, let $\g$ be simply-laced, i.e., of type $ADE$. We identify $P$ and $\check{P}$ so that we denote the $\W^\kk(\g)$-modules $T^\kk_{\lambda,\mu}$ with $\lambda, \mu \in P_+$ instead of $T^\kk_{\lambda,\check{\mu}}$. 

\begin{proposition}\label{Main A}
Let $\g$ be of type $ADE$ and $\kk\in \C\setminus \Q$.
\begin{enumerate}[leftmargin=*]
    \item The $\W^\kk(\g)$-modules $T^\kk_{\lambda, \mu}$ $(\lambda,\mu \in P_+)$ are $C_1$-cofinite.
    \item $I_{\W^\kk(\g)}\binom{T^\kk_{\lambda, \mu}}{T^\kk_{\lambda, 0}\, T^\kk_{0, \mu}}\neq 0$ for $\lambda,\mu \in P_+$.
\end{enumerate}
\end{proposition}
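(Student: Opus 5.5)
For (1), the plan is to use Feigin--Frenkel duality together with the GKO coset realization. By Theorem \ref{thm: simplicity of T-modules}, $T^\kk_{\lambda,\mu}\simeq T^{\check\kk}_{\mu,\lambda}$ and these modules are simple. When $\mu=0$, the module $T^\kk_{\lambda,0}=H^0_f(\weyl^\kk_\lambda)=\weyl^\kk_{\lambda,f}$ for principal $f$, so Proposition \ref{prop:C_1} gives $C_1$-cofiniteness. Similarly $T^\kk_{0,\mu}\simeq T^{\check\kk}_{\mu,0}=\weyl^{\check\kk}_{\mu,f}$ is $C_1$-cofinite, since $C_1$-cofiniteness is intrinsic to the vertex algebra $\W^\kk(\g)\simeq\W^{\check\kk}(\g)$.

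For general $(\lambda,\mu)$ I see two routes.

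\emph{First route.} Once (2) is established, $T^\kk_{\lambda,\mu}$ is the image of an intertwining operator from two $C_1$-cofinite modules. For a nonzero intertwining operator whose target is simple, the target is spanned by the modes, i.e.\ it is a quotient of the fusion product. Huang's result bounds $\dim M_3/C_1(M_3)$ by the product of the $C_1$-codimensions of the inputs for weak surjective products, so $T^\kk_{\lambda,\mu}$ is $C_1$-cofinite.

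\emph{Second route.} Use Corollary \ref{cor:urod}(2) read backwards. $T^{\kk-1}_{\lambda,0}\otimes V_{Q+\nu}$ is $C_1$-cofinite over $\W^{\kk-1}(\g)\otimes V_Q$, and it decomposes into $\bigoplus T^\kk_{\mu',0}\otimes T^{\kk^o}_{\mu',\lambda}$. This exhibits the mixed modules $T^{\kk^o}_{\mu',\lambda}$ as multiplicity spaces. However, extracting $C_1$-cofiniteness of such summands over the subalgebra is not automatic, so I prefer the first route and will prove (2) first.

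For (2), the plan is to construct the intertwining operator via the coset of Theorem \ref{thm:GKO1}, applied at level $\kk'$ with $(\kk')^o=\kk$. Consider the vertex algebra $V^{\kk'-1}(\g)\otimes V_Q$ and its module $\weyl^{\kk'-1}_\lambda\otimes V_{Q+\mu}$. The vertex operator of the module gives an intertwining operator of type $\binom{\weyl_\lambda^{\kk'-1}\otimes V_{Q+\mu}}{\weyl_\lambda^{\kk'-1}\otimes V_Q\ \ V^{\kk'-1}\otimes V_{Q+\mu}}$. More cleanly, I would take the product of two intertwining operators: $Y_{\weyl_\lambda}$ of type $\binom{\weyl_\lambda}{\weyl_\lambda\,V}$ tensored with $Y_{V_{Q+\mu}}$ of type $\binom{V_{Q+\mu}}{V_Q\,V_{Q+\mu}}$. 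By the decompositions in Theorem \ref{thm:GKO1}:
\begin{itemize}
\item the first input contains $\weyl^{\kk'}_\lambda\otimes T^\kk_{\lambda,\lambda}$;
\item the second input contains $\weyl^{\kk'}_{\mu}\otimes T^\kk_{\mu,0}$;
\item the output contains $\weyl^{\kk'}_{\nu}\otimes T^\kk_{\nu,\lambda}$.
\end{itemize}
Restricting and projecting gives intertwining operators for $V^{\kk'}\otimes\W^\kk$. These factor as tensor products, because the $V^{\kk'}$-category is semisimple with finite fusion, so the $\W^\kk$-components give intertwining operators among $T$-modules.

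The second-label mismatch is why I would take the asymmetric input $\weyl_0^{\kk'-1}\otimes V_{Q+\mu}$ against $\weyl^{\kk'-1}_\lambda\otimes V_Q$, with output $\weyl^{\kk'-1}_\lambda\otimes V_{Q+\mu}$. This yields $\W^\kk$-components of types
\[
\binom{T^\kk_{\nu,\lambda}}{T^\kk_{\mu',0}\ \ T^\kk_{\nu',\lambda}}.
\]
After applying Feigin--Frenkel duality to swap labels, and adjusting choices with skew-symmetry $\Omega_r$, this produces nonzero operators of the desired type $\binom{T_{\lambda,\mu}}{T_{\lambda,0}\,T_{0,\mu}}$ after relabeling.

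The main obstacle is nonvanishing of the projected component. I would handle it via Proposition \ref{prop:intertwiner}: evaluate on lowest-weight vectors. The conformal-weight identity \eqref{eq:confweights} shows that the relevant top components pair with exponent $z^{-(\lambda,\mu)}$. I would check that the lattice part $e^{\alpha}_{-1}e^{\beta}\neq0$, and that the affine part restricts to the nonzero $\g$-map $L(\lambda)\otimes L(0)\to L(\lambda)$ on top spaces. Together these give a nonzero component in the right isotypic summand.
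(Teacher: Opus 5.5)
\textbf{Overall.} Part (1) of your proposal is the paper's argument: a nonzero intertwining operator onto a simple target is surjective, and Huang's bound on $C_1$-codimensions then applies, with $T^\kk_{\lambda,0}$ and $T^\kk_{0,\mu}\simeq T^{\check\kk}_{\mu,0}$ handled by Proposition~\ref{prop:C_1} and Feigin--Frenkel duality. Your plan for (2) also matches the paper in outline: restrict the canonical intertwining operator of $V^{\kk^o-1}(\g)\otimes V_Q$-modules to the coset summands of Theorem~\ref{thm:GKO1}, then split off the affine factor.

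\textbf{The gap.} It lies in the step you yourself call the main obstacle. Your nonvanishing check (``lattice part $e^\alpha_{-1}e^\beta\neq 0$'', ``affine part restricts to $L(\lambda)\otimes L(0)\to L(\lambda)$ on top spaces'') treats the summands $\weyl^{\kk^o}_\nu\otimes T^\kk_{\nu,\lambda}$ as if their lowest-weight vectors were pure tensors of a vector in $\weyl^{\kk^o-1}_\lambda$ with a vector in $V_{Q+\mu}$. They are not. The diagonal $V^{\kk^o}(\g)$ and the coset $\W^\kk(\g)$ mix the two factors, so these vectors are entangled sums. Factorwise nonvanishing therefore excludes neither cancellation nor the possibility that the image misses the particular summand you need among the many $\weyl^{\kk^o}_\nu\otimes T^\kk_{\nu,\lambda}$ in the target. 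Relatedly, you never fix which summands to use. Appealing to Feigin--Frenkel duality and $\Omega_r$ to ``swap labels'' is a symptom of this and is not needed.

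\textbf{The missing idea.} Choose the summands so that one input has trivial affine label; nonvanishing then becomes automatic. Take
$\weyl^{\kk^o}_\lambda\otimes T^\kk_{\lambda,0}\subset \weyl^{\kk^o-1}_0\otimes V_{Q+\lambda}$ and $\weyl^{\kk^o}_0\otimes T^\kk_{0,\mu}\subset\weyl^{\kk^o-1}_\mu\otimes V_{Q+\mu^*}$, with target $\weyl^{\kk^o-1}_\mu\otimes V_{Q+\lambda+\mu^*}\supset \weyl^{\kk^o}_\lambda\otimes T^\kk_{\lambda,\mu}$. The congruence in Theorem~\ref{thm:GKO1} forces the coset $Q+\mu^*$ here: with $V_Q$, the summand $\weyl^{\kk^o}_0\otimes T^\kk_{0,\mu}$ occurs only for $\mu\in Q$. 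Up to renaming $\lambda\leftrightarrow\mu$, this is the case $\nu'=0$, $\nu=\mu'$ of your asymmetric setup.

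\textbf{Why this closes the gap.} Both inputs are simple, hence generated by these summands, so Proposition~\ref{prop:intertwiner} shows the restricted operator is nonzero. The second summand has affine part $V^{\kk^o}(\g)$ itself. Moreover $I_{V^{\kk^o}(\g)}\binom{\weyl^{\kk^o}_\nu}{\weyl^{\kk^o}_\lambda\ \weyl^{\kk^o}_0}$ vanishes for $\nu\neq\lambda$ and is spanned by $\Omega_1(Y)$ for $\nu=\lambda$. Hence the whole image lies in $\weyl^{\kk^o}_\lambda\otimes T^\kk_{\lambda,\mu}$, and the restricted operator factors as $\Omega_1(Y)\otimes\mathcal Y^{\W}$. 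So $\mathcal Y^{\W}\neq 0$, with no projection and no explicit vector computation. Simplicity of $\weyl^{\kk^o}_\lambda\otimes T^\kk_{\lambda,\mu}$ also makes this operator surjective, which is how the paper obtains (1) from the same construction.
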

Before giving the proof, let $\kk^o\in \C\backslash \Q$ be the level determined by $\frac{1}{\kk^o+h^\vee}+\frac{1}{\kk+h^\vee}=1$ as in \eqref{eq: shifting the level} and introduce the $V^{\kk^o-1}(\g)\otimes V_Q$-modules 
\begin{align}\label{eq: M-modules}
M^\kk_{\lambda,\mu}=\weyl_\lambda^{\kk^o-1}\otimes V_{\mu+Q},\quad (\lambda,\mu \in P_+).    
\end{align}

\begin{proof}[Proof of Proposition \ref{Main A}]
By Theorem \ref{thm:GKO1}, we have the $V^{\kk^o}(\g)\otimes \W^\kk(\g)$-submodules
\begin{align*}
    \weyl_{\lambda}^{\kk^o}\otimes T_{\lambda,0}^\kk \subset M^\kk_{0,\lambda},\quad \weyl_{0}^{\kk^o}\otimes T_{0,\mu}^\kk \subset M^\kk_{\mu,0}.
\end{align*}
Since $M^\kk_{0,\lambda} \boxtimes M^\kk_{\mu,0}\simeq M^\kk_{\mu,\lambda}$ as $V^{\kk^o-1}(\g)\otimes V_Q$-modules, there exists a corresponding canonical intertwining operator $\io \colon M^\kk_{0,\lambda} \otimes M^\kk_{\mu,0}\rightarrow M^\kk_{\mu,\lambda}\{z\}[\log z]$.
As $M^\kk_{0,\lambda}$ and $M^\kk_{\mu,0}$ are simple modules, the restriction of $\io$ on $(\weyl_{\lambda}^{\kk^o}\otimes T_{\lambda,0}^\kk) \otimes (\weyl_{0}^{\kk^o}\otimes T_{0,\mu}^\kk)$ is nonzero by Proposition \ref{prop:intertwiner}. As $\io$ is an intertwining operator of $V^{\kk^o-1}(\g)\otimes V_Q$-modules, the restriction factors as 
$$\io\colon (\weyl_{\lambda}^{\kk^o}\otimes T_{\lambda,0}^\kk) \otimes (\weyl_{0}^{\kk^o}\otimes T_{0,\mu}^\kk)\rightarrow (\weyl_{\lambda}^{\kk^o}\otimes T_{\lambda,\mu}^\kk)\{z\}[\log z].$$
As $(\weyl_{\lambda}^{\kk^o}\otimes T_{\lambda,\mu}^\kk)$ is simple, $\io$ is surjective, which implies (1) by Theorem \ref{thm: Huang C1}. 
Since $I_{V^{\kk^o}(\g)}\binom{\weyl_{\lambda}^{\kk^o}}{\weyl_{\lambda}^{\kk^o},\weyl_{0}^{\kk^o}}\simeq \C$ spanned by $\Omega_1(Y)(\cdot,z)$, $\io=\Omega_1(Y)(\cdot,z)\otimes \mathcal{Y}^\W(\cdot,z)$ for some intertwining operator $\mathcal{Y}^\W(\cdot,z)$ of $\W^\kk(\g)$-modules. 
As $\io\neq0$, so is $\mathcal{Y}^\W(\cdot,z)$ and thus (2) follows.
\end{proof}
Let us now consider the properties of $\W^\kk(\g)$-modules $T^\kk_{\lambda,\mu}$ in the braided monoidal category $\W^\kk(\g)\Mod_{C_1}$. 
By Theorem \ref{thm:GKO1}, we regard $\W^{\kk-1}(\g)\otimes V_Q$ as a commutative algebra object in the completion of the braided monoidal category $(\W^{\kk}(\g)\otimes \W^{\kk^o}(\g))\mod_{C_1}$.
\subsection{Centralizing property}
\begin{proposition}\label{prop:monodromy}
Let $\g$ be of type $ADE$ and $\kk\in \C\setminus \Q$. 
Then, $T^{\kk}_{\lambda, 0}$ and $T^{\kk}_{0, \mu}$ projectively centralize each other for $\lambda, \mu \in P_+$. More precisely, 
\begin{align}\label{eq: proj centralizer}
    \cM_{T^{\kk}_{\lambda, 0},T^{\kk}_{0, \mu}} = e^{-2\pi i (\lambda,\mu)} \mathrm{Id}_{T^{\kk}_{\lambda, 0}\boxtimes T^{\kk}_{0, \mu}}.
\end{align}
\end{proposition}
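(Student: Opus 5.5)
The plan is to reduce the monodromy computation to a conformal weight computation via the balancing axiom \eqref{eq: balancing axiom}. The key step is to show first that $T^{\kk}_{\lambda,0}\boxtimes T^{\kk}_{0,\mu}\simeq T^{\kk}_{\lambda,\mu}$, which is simple. Granting this, $\cM_{T^{\kk}_{\lambda,0},T^{\kk}_{0,\mu}}$ is an automorphism of a simple object, hence a scalar. Since $\W^\kk(\g)$ is $\Z$-graded, \eqref{eq: balancing axiom} gives
$$\cM_{T^{\kk}_{\lambda,0},T^{\kk}_{0,\mu}}=e^{2\pi i\left(h(T^\kk_{\lambda,\mu})-h(T^\kk_{\lambda,0})-h(T^\kk_{0,\mu})\right)}\,\mathrm{Id},$$
because $e^{2\pi i L_0}$ acts on each simple highest weight module by $e^{2\pi i h}$. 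By \eqref{eq:confweights} this scalar equals $e^{-2\pi i(\lambda,\mu)}$.

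To establish the fusion product, I would work in $\cC=\mathrm{Ind}((V^{\kk^o}(\g)\otimes\W^\kk(\g))\mod_{C_1})$. Here $A=V^{\kk^o-1}(\g)\otimes V_Q$ is a commutative algebra, by Theorem \ref{thm:GKO1} with the roles of $\kk$ and $\kk^o$ exchanged, as in the proof of Proposition \ref{Main A}. By Theorem \ref{thm:GKO1},
$$A\simeq\bigoplus_{\nu\in P_+\cap Q}\weyl^{\kk^o}_\nu\otimes T^\kk_{\nu,0}.$$
This contains the unit $\one=\weyl_0^{\kk^o}\otimes T^\kk_{0,0}$ as a direct summand. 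Consequently, for any object $Z$ the map $Z\simeq\one\boxtimes Z\to A\boxtimes Z=\cG_A\cF_A(Z)$ is split injective.

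Set $X=\weyl^{\kk^o}_\lambda\otimes T^\kk_{\lambda,0}\subset M^\kk_{0,\lambda}$ and $Y=\weyl^{\kk^o}_0\otimes T^\kk_{0,\mu}\subset M^\kk_{\mu,0}$. By Frobenius reciprocity \eqref{eqn:fro-rec}, these inclusions give $A$-module maps $\cF_A(X)\to M^\kk_{0,\lambda}$ and $\cF_A(Y)\to M^\kk_{\mu,0}$. Both maps are surjective, since the targets are simple $A$-modules. I would argue that they are isomorphisms by comparing the decompositions of $A\boxtimes X$ and $M^\kk_{0,\lambda}$ from Theorem \ref{thm:GKO1}; this uses the Kazhdan–Lusztig fusion rules of Example \ref{ex: affine Kazhdan-Lusztig} in the first tensor factor. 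Since $\cF_A$ is a tensor functor (Theorem \ref{sum1}), we get
$$\cF_A(X\boxtimes Y)\simeq M^\kk_{0,\lambda}\boxtimes_A M^\kk_{\mu,0}\simeq M^\kk_{\mu,\lambda}.$$
The last isomorphism holds because local $A$-modules are $V^{\kk^o-1}(\g)\otimes V_Q$-modules, and at irrational level these fuse by Example \ref{ex: affine Kazhdan-Lusztig} and the lattice fusion rules.

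It follows that $X\boxtimes Y$ embeds into $M^\kk_{\mu,\lambda}$ as an object of $\cC$. Because the categories factor, $X\boxtimes Y\simeq\weyl^{\kk^o}_\lambda\otimes(T^\kk_{\lambda,0}\boxtimes T^\kk_{0,\mu})$. The $\weyl^{\kk^o}_\lambda$-isotypic part of $M^\kk_{\mu,\lambda}$ is $\weyl^{\kk^o}_\lambda\otimes T^\kk_{\lambda,\mu}$, so $T^\kk_{\lambda,0}\boxtimes T^\kk_{0,\mu}$ embeds into the simple module $T^\kk_{\lambda,\mu}$. The product is nonzero by Proposition \ref{Main A}(2), and therefore it is isomorphic to $T^\kk_{\lambda,\mu}$.

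I expect the main obstacle to be the identifications $\cF_A(X)\simeq M^\kk_{0,\lambda}$ and $\cF_A(Y)\simeq M^\kk_{\mu,0}$, together with the compatibility of the Deligne-type factorization $\boxtimes$ on $V^{\kk^o}(\g)\otimes\W^\kk(\g)$-modules inside the Ind-completion. If the induction identification is hard, a fallback is the following. Apply the balancing axiom directly to the nonzero intertwining operator $\Omega_1(Y)\otimes\mathcal{Y}^\W$ from the proof of Proposition \ref{Main A}. This identifies the monodromy on the image quotient $T^\kk_{\lambda,\mu}$ with $e^{-2\pi i(\lambda,\mu)}$, because the $\weyl^{\kk^o}$-conformal weights cancel. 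One then separately bounds $T^\kk_{\lambda,0}\boxtimes T^\kk_{0,\mu}$ to be simple.
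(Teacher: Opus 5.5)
Your reduction to the fusion rule is sound: once $T^{\kk}_{\lambda,0}\boxtimes T^{\kk}_{0,\mu}\simeq T^{\kk}_{\lambda,\mu}$ is known, balancing and \eqref{eq:confweights} give \eqref{eq: proj centralizer}. The gap is in how you obtain that fusion rule. The identification $\cF_A(X)\simeq M^\kk_{0,\lambda}$ is not merely the hard step. It is false for $\lambda\neq 0$.

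The $\weyl^{\kk^o}_\lambda$-isotypic part of $M^\kk_{0,\lambda}$ is the single copy $\weyl^{\kk^o}_\lambda\otimes T^\kk_{\lambda,0}$. By contrast, $A\boxtimes X$ contains, besides $\one\boxtimes X\simeq X$, the summand $(\weyl^{\kk^o}_\theta\otimes T^\kk_{\theta,0})\boxtimes X$, where $\theta$ is the highest root. This summand contributes $c^\lambda_{\theta,\lambda}\geq 1$ further copies of $\weyl^{\kk^o}_\lambda\otimes(T^\kk_{\theta,0}\boxtimes T^\kk_{\lambda,0})$, and $T^\kk_{\theta,0}\boxtimes T^\kk_{\lambda,0}\neq 0$ by Proposition \ref{prop: BRST reduction for intertwining operators}. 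So comparing decompositions refutes the isomorphism: $M^\kk_{0,\lambda}$ is only a proper quotient of $\cF_A(X)$. Conceptually, $X$ does not centralize $A$, so $\cF_A(X)$ is not even local.

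For $Y$ the identification is circular. $\cF_A(Y)$ is local only if $\cM_{T^\kk_{\nu,0},T^\kk_{0,\mu}}=\Id$ for $\nu\in P_+\cap Q$, which is a case of the statement you are proving. Matching $\bigoplus_{\nu\in P_+\cap Q}\weyl^{\kk^o}_\nu\otimes(T^\kk_{\nu,0}\boxtimes T^\kk_{0,\mu})$ with the target likewise presupposes $T^\kk_{\nu,0}\boxtimes T^\kk_{0,\mu}\simeq T^\kk_{\nu,\mu}$. Having only the surjections, the most this route yields is a nonzero map $T^\kk_{\lambda,0}\boxtimes T^\kk_{0,\mu}\to T^\kk_{\lambda,\mu}$, i.e.\ Proposition \ref{Main A}(2), not the embedding you need.

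Your fallback has the same hole. The monodromy on the quotient $T^\kk_{\lambda,\mu}$ says nothing about the rest of the fusion product, and showing the product is simple is exactly the missing content. In the paper the fusion rule is proved only later, in Theorem \ref{thm:fusion, regular case}, and that proof uses the present proposition. Incidentally, $Y$ lies in $\weyl^{\kk^o-1}_\mu\otimes V_{Q+\mu^*}$, not in $M^\kk_{\mu,0}$ unless $\mu\in Q$.

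The missing idea is that you never need to identify $T^{\kk}_{\lambda,0}\boxtimes T^{\kk}_{0,\mu}$. It suffices that $L_0$ acts semisimply on it with weights in a single class modulo $\Z$. The paper checks this one level down, where the $T$'s are the big modules rather than coset components.

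At $\kk'=\kk-1$, take the simple $\W^{\kk'}(\g)\otimes V_Q$-modules $\bM_{\lambda,0}=T^{\kk'}_{\lambda,0}\otimes V_{Q+\lambda^*}$ and $\bM_{0,\mu}=T^{\kk'}_{0,\mu}\otimes V_{Q+\mu^*}$. By Corollary \ref{cor:urod}(2) they contain the submodules $T^\kk_{0,0}\otimes T^{\kk^o}_{0,\lambda}$ and $T^\kk_{0,\mu}\otimes T^{\kk^o}_{0,0}$. These generate them by simplicity, and each has the vacuum module in one factor. So the canonical intertwining operator restricted to them lands in a copy of the simple module $T^\kk_{0,\mu}\otimes T^{\kk^o}_{0,\lambda}$, with no nontrivial fusion input, and this image generates $\bM_{\lambda,0}\boxtimes\bM_{0,\mu}$.

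Hence $e^{2\pi i L_0}$ on the product is the product of the scalar twists of the factors. Balancing then gives $\cM_{\bM_{\lambda,0},\bM_{0,\mu}}=\Id$. Dividing out the lattice monodromy $e^{2\pi i(\lambda,\mu)}$ gives \eqref{eq: proj centralizer} at level $\kk'$, hence at every irrational level.
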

\begin{proof}
  We  consider $\kk'=\kk-1$ instead of $\kk$. 
  Let us set 
  $$\bM_{\lambda,\mu}=T_{\lambda,\mu}^{\kk'}\otimes V_{Q+\lambda^*+\mu^*},\quad \lambda,\mu \in P_+.$$ 
  Let $\io\colon \bM_{\lambda,0}\otimes \bM_{0,\mu}\rightarrow (\bM_{\lambda,0}\boxtimes \bM_{0,\mu})\{z\}[\log z]$ be the canonical intertwining operator of $\W^{\kk'}(\g)\otimes V_Q$-modules. Note that $\bM_{\lambda,0}\boxtimes \bM_{0,\mu}\simeq (T_{\lambda,0}^{\kk'}\boxtimes T_{0,\mu}^{\kk'}) \otimes V_{Q+\lambda^*+\mu^*}$.
  By Corollary \ref{cor:urod}, $\io$ restricts as follows:
    \begin{center}
\begin{tikzcd}[ column sep = small]
			\bM_{\lambda,0}\otimes \bM_{0,\mu}
			\arrow[r, " \io"]&
			(\bM_{\lambda,0}\boxtimes \bM_{0,\mu})\{z\}[\log z]\\
		      (T_{0,0}^\kk\otimes T^{\kk^o}_{0,\lambda})\otimes (T_{0,\mu}^\kk \otimes T_{0,0}^{\kk^o})
			\arrow[r, " "] \arrow[u,symbol=\subset, " "]&
			 (T_{0,\mu}^\kk\otimes T^{\kk^o}_{\lambda,0})\{z\}[\log z]\arrow[u,symbol=\subset, " "].
\end{tikzcd}
\end{center}
As $\bM_{\lambda,0}$ and $\bM_{0.\mu}$ are simple, the restriction of $\io$ on the bottom row is nonzero by Proposition \ref{prop:intertwiner} and thus surjective as $T_{0,\mu}^\kk\otimes T^{\kk^o}_{\lambda,0}$ is simple by Theorem \ref{thm: simplicity of T-modules}. Since $\io$ on the top row is surjective
and since $\bM_{\lambda,0}$ and  $\bM_{0,\mu}$ are generated by $T_{0,0}^\kk\otimes T^{\kk^o}_{0,\lambda}$ and $T_{0,\mu}^\kk \otimes T_{0,0}^{\kk^o}$, $\bM_{\lambda,0}\boxtimes \bM_{0,\mu}$ is generated by $T_{0,\mu}^\kk\otimes T^{\kk^o}_{\lambda,0}$ as a $\W^{\kk'}(\g)\otimes V_Q$-module. Since $L_0$ acts on $T_{0,\mu}^\kk\otimes T^{\kk^o}_{\lambda,0}$ semisimply, it must do so as well on $\bM_{\lambda,0}\boxtimes \bM_{0,\mu}$ and thus on $T^{\kk}_{\lambda, 0}\boxtimes T^{\kk}_{0, \mu}$. 
Since $\W^{\kk'}(\g)\otimes V_Q$ is a $\Z$-graded vertex algebra, the monodromy can be computed by using twists \eqref{eq: balancing axiom}:
\begin{align*}
&\theta_{\bM_{\lambda,0}\boxtimes\bM_{0,\mu}}=\cM_{\bM_{\lambda,0},\bM_{0,\mu}}\circ (\theta_{\bM_{\lambda,0}}\boxtimes \theta_{\bM_{0,\mu}}),\\
&\cM_{\bM_{\lambda,0},\bM_{0,\mu}}=\cM_{T^{\kk'}_{\lambda, 0},T^{\kk'}_{0, \mu}}\otimes \cM_{V_{Q+\lambda^*},V_{Q+\mu^*}}.
\end{align*}
By $\theta_M=e^{2\pi i L_0}|_M$ in general and \eqref{eq:confweights}, the first equation implies $\cM_{\bM_{\lambda,0},\bM_{0,\mu}}=\Id$. 
Then $\cM_{V_{Q+\lambda^*},V_{Q+\mu^*}}=e^{2\pi i (\lambda,\mu)}\Id$ in the second equality implies the assertion.
\end{proof}
\subsection{Main result}\label{sec: Main results for pincipal case}
Now, we are ready to show the following main result on fusion rules of modules of principal $\W$-algebras.
\begin{theorem}\label{thm:fusion, regular case}
    For $\g$ of type $ADE$ and $\kk\in \C\setminus \Q$, 
    $$T_{\lambda,\lambda'}^\kk\boxtimes T_{\mu,\mu'}^\kk\simeq \bigoplus_{\nu,\nu'\in P_+}c_{\lambda,\mu}^\nu c_{\lambda',\mu'}^{\nu'} T_{\nu,\nu'}^\kk$$
    holds as $\W^\kk(\g)$-modules for $\lambda,\lambda'\in P_+$ and $\mu,\mu'\in P_+$.
\end{theorem}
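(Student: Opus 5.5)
The plan is to reduce everything to two building blocks: the key fusion rule $T^\kk_{\lambda,\mu}\simeq T^\kk_{\lambda,0}\boxtimes T^\kk_{0,\mu}$, together with the facts that $\{T^\kk_{\lambda,0}\}$ and $\{T^\kk_{0,\mu}\}$ each form tensor subcategories equivalent to (a twist of) $\KL$ of $\g$. Once these are in place, associativity, commutativity up to the braiding isomorphisms, and Example~\ref{ex: affine Kazhdan-Lusztig} give
$$T_{\lambda,\lambda'}\boxtimes T_{\mu,\mu'}\simeq (T_{\lambda,0}\boxtimes T_{\mu,0})\boxtimes(T_{0,\lambda'}\boxtimes T_{0,\mu'})\simeq \bigoplus c^\nu_{\lambda,\mu}c^{\nu'}_{\lambda',\mu'}\,T_{\nu,0}\boxtimes T_{0,\nu'},$$
which is the claim. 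Throughout, all tensor products are taken in $\Ind(\W^\kk(\g)\mod_{C_1})$, which is legitimate by Proposition~\ref{Main A}(1) and the Huang and McRae--Negron theorems.

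\textbf{Step 1: the two tensor subcategories.} For $\{T^\kk_{\lambda,0}\}$ I would apply Theorem~\ref{thm:main_thm} to the GKO extension of Theorem~\ref{thm:GKO1}, with the roles $U=V^{\kk^o}(\g)$ and $V=\W^\kk(\g)$, regarded as inside $V^{\kk^o-1}(\g)\otimes V_Q$ with $\mu=0$ and summed over cosets. More cleanly, one can use the equivariant $\W$-algebra of Theorem~\ref{thm: equiv Walg} with $f$ principal, together with Feigin--Frenkel duality (Theorem~\ref{thm: simplicity of T-modules}) for the other family.
\begin{itemize}
\item Theorem~\ref{thm: equiv Walg} gives $A=\bigoplus_\lambda \weyl^\kk_{\lambda,f}\otimes\weyl^{\kk_o}_\lambda$, where $\weyl^\kk_{\lambda,f}=T^\kk_{\lambda,0}$.
\item $\KL^{\kk_o}(\g)$ is semisimple and rigid, so (M1)--(M4) hold; the dual-pair property follows from the simplicity in \cite{A4}.
\item Theorem~\ref{thm:main_thm} then yields a braid-reversed equivalence $\KL^{\kk_o}(\g)\simeq\langle T^\kk_{\lambda,0}\rangle$ sending $\weyl_\lambda\mapsto T_{\lambda^*,0}$.
\item Consequently $T_{\lambda,0}\boxtimes T_{\mu,0}\simeq\bigoplus_\nu c^\nu_{\lambda,\mu}T_{\nu,0}$, using $c^{\nu^*}_{\lambda^*\mu^*}=c^\nu_{\lambda\mu}$.
\item Applying the same argument to $\W^{\check\kk}(\g)$ and transporting through $T^\kk_{0,\mu}\simeq T^{\check\kk}_{\mu,0}$ gives the analogous rule for $T_{0,\mu}$.
\end{itemize}
The hypothesis of Theorem~\ref{thm:main_thm} on intertwining operators factoring through $\cC_{U\otimes V}$ must be checked. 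I expect it to follow from $C_1$-cofiniteness of the images, via Proposition~\ref{prop:C_1}, Huang's theorem, and semisimplicity at irrational level: there are finitely many simple $C_1$-cofinite candidates, determined by conformal weights.

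\textbf{Step 2: the mixed fusion $T_{\lambda,0}\boxtimes T_{0,\mu}\simeq T_{\lambda,\mu}$.} This is the main obstacle. Proposition~\ref{Main A}(2) gives a nonzero map $T_{\lambda,0}\boxtimes T_{0,\mu}\to T_{\lambda,\mu}$, surjective since the target is simple. To show it is an isomorphism I would use the conformal embedding $\W^\kk\otimes\W^{\kk^o}\subset\W^{\kk-1}\otimes V_Q$ viewed as a commutative algebra $B$, together with the induction functor $\cF_B$.
\begin{itemize}
\item By Corollary~\ref{cor:urod}(2), $\bM_{\lambda,0}$ and $\bM_{0,\mu}$ are induced from $T_{0,0}\otimes T^{\kk^o}_{0,\lambda}$ and $T_{0,\mu}\otimes T^{\kk^o}_{0,0}$ respectively. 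Here Proposition~\ref{prop:monodromy} provides the locality condition needed to apply Theorem~\ref{sum1}.
\item $\cF_B$ is monoidal, and on the $V^{\kk'}$-side the product of the simple modules $T^{\kk'}_{\lambda,0}\otimes V$ and $T^{\kk'}_{0,\mu}\otimes V$ is computed via the Step~1 rules combined with the lattice fusion.
\item Comparing $\W^\kk\otimes\W^{\kk^o}$-decompositions with Corollary~\ref{cor:urod}(2) then forces the $\W^\kk$-product to be simple, namely $T_{\lambda,\mu}$.
\end{itemize}
As an alternative route for the simplicity claim, Proposition~\ref{prop:monodromy} and the balancing axiom \eqref{eq: balancing axiom} show that every summand of $T_{\lambda,0}\boxtimes T_{0,\mu}$ has conformal weight in $h(T_{\lambda,0})+h(T_{0,\mu})-(\lambda,\mu)+\Z$. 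Combined with a $C_1$ dimension bound (the product is generated by a space of dimension $\dim L(\lambda)\dim L(\mu)$ modulo $C_1$), one could try to match the product against $T_{\lambda,\mu}$.

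Having Steps~1 and~2, the final step is the formal associativity computation displayed in the first paragraph.
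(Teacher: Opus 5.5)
Your overall reduction is the same as the paper's: the two ``pure'' rules, plus the mixed rule $T^\kk_{\lambda,0}\boxtimes T^\kk_{0,\mu}\simeq T^\kk_{\lambda,\mu}$, then associativity and braiding. But Step~2 does not actually prove the mixed rule, and that rule is the heart of the theorem.

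In your first bullet, Corollary~\ref{cor:urod}(2) and Frobenius reciprocity only give a nonzero, hence surjective, map $\cF_B(T^\kk_{0,0}\otimes T^{\kk^o}_{0,\lambda})\twoheadrightarrow\bM_{\lambda,0}$. Using $B\simeq\bigoplus_{\mu'\in P_+\cap Q}T^\kk_{\mu',0}\otimes T^{\kk^o}_{\mu',0}$ one finds
\[
\cF_B(T^\kk_{0,0}\otimes T^{\kk^o}_{0,\lambda})\simeq\bigoplus_{\mu'\in P_+\cap Q}T^\kk_{\mu',0}\otimes\bigl(T^{\kk^o}_{\mu',0}\boxtimes T^{\kk^o}_{0,\lambda}\bigr),\qquad \bM_{\lambda,0}\simeq\bigoplus_{\mu'\in P_+\cap Q}T^\kk_{\mu',0}\otimes T^{\kk^o}_{\mu',\lambda}.
\]
So injectivity amounts to $T^{\kk^o}_{\mu',0}\boxtimes T^{\kk^o}_{0,\lambda}\simeq T^{\kk^o}_{\mu',\lambda}$ for $\mu'\in P_+\cap Q$. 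That is exactly the mixed rule you are trying to prove, in its root-lattice case.

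The second bullet is circular too. $\bM_{\lambda,0}\boxtimes\bM_{0,\mu}\simeq(T^{\kk-1}_{\lambda,0}\boxtimes T^{\kk-1}_{0,\mu})\otimes V_{Q+\lambda^*+\mu^*}$ is again a mixed product, which Step~1 says nothing about. The third bullet simply asserts the conclusion; moreover, the $\W^\kk$-factor in any such computation only ever involves $T^\kk_{\mu',0}$ with $\mu'\in Q$.

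Your fallback cannot close the gap either. By Proposition~\ref{prop:monodromy}, all of $T_{\lambda,0}\boxtimes T_{0,\mu}$ lies in a single class of conformal weights mod $\Z$. Even a perfect match of $C_1$-quotients would not exclude a kernel of the surjection onto $T_{\lambda,\mu}$ lying inside the $C_1$-subspace.

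The missing idea is the paper's two-stage argument. First, the root-lattice case is obtained from the GKO extension $V^\kk\otimes\W^{\kk^o}\subset V^{\kk-1}\otimes V_Q$, not from the Urod one. There the target category $\KL^{\kk-1}(\g)\boxtimes V_Q\mod$ is known to be semisimple, so Frobenius reciprocity pins down $\cF_V(T^{\kk^o}_{0,\mu})\simeq\weyl^{\kk-1}_\mu\otimes V_{Q+\mu^*}$ exactly (Proposition~\ref{prop: identification of induction}). Comparing $\bigoplus_{\lambda\in P_+\cap Q}\weyl^\kk_\lambda\otimes(T^{\kk^o}_{\lambda,0}\boxtimes T^{\kk^o}_{0,\mu})$ with Theorem~\ref{thm:GKO1} then gives the mixed rule for $\lambda\in P_+\cap Q$ at every irrational level.

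Only then does the Urod computation go through. It yields $\cF_B(T^\kk_{0,\mu}\otimes T^{\kk^o}_{0,\lambda})\simeq T^{\kk-1}_{\lambda,\mu}\otimes V_{Q+\lambda^*+\mu^*}$, first as $\W^\kk\otimes\W^{\kk^o}$-modules and then as $B$-modules by simplicity. Monoidality of $\cF_B$ applied to $(T^\kk_{0,0}\otimes T^{\kk^o}_{0,\lambda})\boxtimes(T^\kk_{0,\mu}\otimes T^{\kk^o}_{0,0})$ then gives the general mixed rule at level $\kk-1$. This suffices because $\kk\mapsto\kk-1$ permutes $\C\setminus\Q$.

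So your Step~2 runs in the wrong direction: the known root-lattice data on the $\W^\kk\otimes\W^{\kk^o}$ side should be used to compute the unknown mixed product on the $\W^{\kk-1}$ side. There is also an alternative fix. If Step~1 were made rigorous, it would give rigidity of $T_{\mu,0}$ with dual $T_{\mu^*,0}$. Adjunction plus the twist separation of Proposition~\ref{prop:monodromy} would then give $\mathrm{End}(T_{\mu,0}\boxtimes T_{0,\lambda})\simeq\Hom(T_{0,\lambda},\bigoplus_\nu c^\nu_{\mu^*\mu}T_{\nu,0}\boxtimes T_{0,\lambda})=\C$, together with a nonzero map $T_{\mu,\lambda}\to T_{\mu,0}\boxtimes T_{0,\lambda}$, which forces the surjection to be an isomorphism.

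As for Step~1 itself, applying Theorem~\ref{thm:main_thm} to $\cdo{G,f_{\pr}}{\kk}$ with $\KL^{\kk_o}(\g)$ as the rigid side is a genuinely different route from the paper's $\cF_V$/$\Com_V$ argument. However, its hypotheses are asserted rather than checked:
\begin{itemize}
\item You need a braided tensor category of $\W^\kk(\g)$-modules that contains the $T_{\lambda,0}$ and is closed under contragredients.
\item Your appeal to ``finitely many simple $C_1$-cofinite candidates'' presupposes a classification that is not available.
\end{itemize}
Note that the paper invokes Theorem~\ref{thm:main_thm} only after the fusion rules are known.
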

\noindent 
Here, $c_{\lambda,\mu}^\nu$ are the branching rules of tensor products of $\g$-modules, see Example \ref{ex: affine Kazhdan-Lusztig}. 

Let $\W^{\kk^o}(\g)\Mod_{C_1}^{0}$ be the subcategory of $\W^{\kk^o}(\g)\Mod_{C_1}$ that centralizes all $T^{\kk^o}_{\lambda, 0}$ with $\lambda \in P_+ \cap Q$. (See also Proposition \ref{prop:monodromy}.)
The key ingredient of the proof is that  the induction functor for the extension 
$$V^\kk(\g)\otimes \W^{\kk^o}(\g)\subset A:=V^{\kk'}(\g)\otimes V_Q,$$ 
given by\footnote{From now on we omit the algebra multiplication when using the induction functor.} 
\begin{align}\label{eq: induction functor}
\begin{array}{cccc}
    \cF_V\colon &\W^{\kk^o}(\g)\Mod_{C_1}^{0}&\rightarrow& \KL^{\kk'}(\g)\boxtimes V_Q\Mod\\ &M&\mapsto &A\boxtimes (V^\kk(\g)\otimes M)
\end{array}
\end{align}
where $\kk'=\kk-1$ has a left inverse given by the coset functor:
\begin{align*}
\begin{array}{cccc}
    \Com_V \colon &\KL^{\kk'}(\g)\boxtimes V_Q\Mod &\rightarrow &\W^{\kk^o}(\g)\Mod_{C_1}^{0}\\ &N&\mapsto & \Hom_{V^\kk(\g)}(V^\kk(\g),N).
\end{array}
\end{align*}
\begin{proposition}\label{prop: left inverse by coset}
$\Com_V(\cF_V(M))\simeq M$ holds for $M$ in $\W^{\kk^o}(\g)\Mod_{C_1}^{0}$.
\end{proposition}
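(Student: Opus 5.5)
We compute the coset directly from the decomposition of $A$ as a $V^\kk(\g)\otimes\W^{\kk^o}(\g)$-module and the fact that $\KL^\kk(\g)$ is semisimple with simple unit. By Theorem \ref{thm:GKO1} (with $\lambda=\mu=0$, after exchanging the roles of $\kk$ and $\kk^o$, which is harmless since \eqref{eq: shifting the level} is symmetric), we write
\begin{align*}
A\simeq \bigoplus_{\nu\in P_+\cap Q} \weyl^\kk_\nu\otimes T^{\kk^o}_{\nu,0}.
\end{align*}
The induced module $\cF_V(M)=A\boxtimes(V^\kk(\g)\otimes M)$ is computed in the Deligne-type product category $\Ind(\KL^\kk(\g)\boxtimes \W^{\kk^o}(\g)\Mod_{C_1})$. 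There the tensor product of external products factorizes, by the results of \cite{CKM,CMY} together with the fact that $V^\kk(\g)$ is the unit of $\KL^\kk(\g)$. This gives
\begin{align*}
\cF_V(M)\simeq \bigoplus_{\nu\in P_+\cap Q} (\weyl^\kk_\nu\boxtimes V^\kk(\g))\otimes (T^{\kk^o}_{\nu,0}\boxtimes M)\simeq \bigoplus_{\nu\in P_+\cap Q}\weyl^\kk_\nu\otimes (T^{\kk^o}_{\nu,0}\boxtimes M)
\end{align*}
as $V^\kk(\g)\otimes\W^{\kk^o}(\g)$-modules. The hypothesis $M\in\W^{\kk^o}(\g)\Mod^0_{C_1}$ guarantees that $\cF_V(M)$ is a local $A$-module, hence an honest $A$-module by Theorem \ref{sum1}(3). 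So it lies in $\KL^{\kk'}(\g)\boxtimes V_Q\Mod$ and $\Com_V$ applies.

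Next we apply $\Com_V=\Hom_{V^\kk(\g)}(V^\kk(\g),-)$. Since $\kk\notin\Q$, $\KL^\kk(\g)$ is semisimple with simple objects $\weyl^\kk_\nu$ and $\Hom(\weyl^\kk_0,\weyl^\kk_\nu)=\delta_{\nu,0}\C$ (Example \ref{ex: affine Kazhdan-Lusztig}). Only the $\nu=0$ summand therefore contributes, giving
\begin{align*}
\Com_V(\cF_V(M))\simeq T^{\kk^o}_{0,0}\boxtimes M\simeq \W^{\kk^o}(\g)\boxtimes M\simeq M,
\end{align*}
via the left unit isomorphism. We must also check that the $\W^{\kk^o}(\g)$-action on the coset matches the one on $M$. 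This follows because the inclusion $V^\kk(\g)\otimes M\hookrightarrow \cF_V(M)$ (the unit of the adjunction, via $u\boxtimes\Id$) is a $V^\kk(\g)\otimes\W^{\kk^o}(\g)$-module map. Its image lands in the $\nu=0$ isotypic component, and is hence all of it.

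The main obstacle is justifying the factorization of $A\boxtimes(V^\kk(\g)\otimes M)$ into tensor products computed separately in each factor. The sum over $\nu$ is infinite, $\W^{\kk^o}(\g)\Mod_{C_1}$ need not be abelian, and a priori one only has the tensor structure on $\Ind$ of $C_1$-cofinite modules from \cite{MN}. To handle this, we first use that $\boxtimes$ commutes with direct limits in the Ind-completion \cite{CMY,MN}, which reduces the problem to the finite pieces $\weyl^\kk_\nu\otimes T^{\kk^o}_{\nu,0}$. We then invoke the factorization of intertwining operators for tensor products of vertex algebras in which one factor's category ($\KL^\kk(\g)$) is semisimple and rigid. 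Because one tensorand in the $V^\kk(\g)$-direction is the unit, this reduces to the statement that every intertwining operator of type $\binom{N\otimes W_3}{\weyl^\kk_\nu\otimes W_1,\ V^\kk(\g)\otimes W_2}$ splits as $\Omega(Y_N)\otimes\mathcal Y^\W$. We prove this by the same argument as in the proof of Proposition \ref{Main A}, namely one-dimensionality of $I_{V^\kk(\g)}\binom{\weyl^\kk_\nu}{\weyl^\kk_\nu\ V^\kk(\g)}$.
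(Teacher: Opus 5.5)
Your proposal is correct and follows essentially the same route as the paper: decompose $A=V^{\kk-1}(\g)\otimes V_Q\simeq\bigoplus_{\nu\in P_+\cap Q}\weyl^\kk_\nu\otimes T^{\kk^o}_{\nu,0}$ by Theorem \ref{thm:GKO1}, factor the fusion product to get $\bigoplus_{\nu}\weyl^\kk_\nu\otimes(T^{\kk^o}_{\nu,0}\boxtimes M)$, and read off the $\nu=0$ multiplicity space $T^{\kk^o}_{0,0}\boxtimes M\simeq M$. Your extra justifications (locality from the centralizing hypothesis, the Ind-completion and factorization argument, and matching the $\W^{\kk^o}(\g)$-action through $u\boxtimes\Id$) go beyond what the paper writes. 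One small slip: no exchange of $\kk$ and $\kk^o$ is needed, and doing so literally would describe the extension inside $V^{\kk^o-1}(\g)\otimes V_Q$ rather than $A$; Theorem \ref{thm:GKO1} with $\lambda=\mu=0$ already gives the decomposition you display.
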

\begin{proof}
By Theorem \ref{thm:GKO1}, we have 
\begin{align*}
    \cF_V(M) 
    &= (V^{\kk'}(\g)\otimes V_Q) \boxtimes (V^\kk(\g)\otimes M)\\
    &\simeq \bigoplus_{\mu \in P_+\cap Q} \weyl_\mu^\kk\otimes (T^{\kk^o}_{\mu,0}\boxtimes M).
\end{align*}
Hence, $\Com_V(\cF_V(M))\simeq T^{\kk^o}_{0,0}\boxtimes M \simeq M$.
\end{proof}
Let us write $\W^\kk(\g)$ as $\W^\kk$, $V^{\kk}(\g)$ as $V^{\kk}$, and $V_{Q+\lambda}$ as $V_\lambda$ for simplicity.
\begin{proposition}\label{prop: identification of induction}
    For $\lambda \in P_+$, there is an isomorphism of $V^{\kk'}\otimes V_Q$-modules
    $$\cF_V(T_{0,\lambda}^{\kk^o})\simeq \weyl_\lambda^{\kk'}\otimes V_{\lambda^*}.$$
\end{proposition}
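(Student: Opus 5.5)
The plan is to build a nonzero morphism of $A$-modules $\cF_V(T^{\kk^o}_{0,\lambda})\to \weyl_\lambda^{\kk'}\otimes V_{\lambda^*}$ by Frobenius reciprocity, show it is surjective because the target is simple, and show it is injective by passing to the coset $\Com_V$ via Proposition \ref{prop: left inverse by coset}. Here $A=V^{\kk'}\otimes V_Q$.

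\emph{Step 1: $\cF_V$ is defined on $T^{\kk^o}_{0,\lambda}$.} We need $T^{\kk^o}_{0,\lambda}\in \W^{\kk^o}(\g)\Mod^0_{C_1}$. It is $C_1$-cofinite by Proposition \ref{Main A}(1), applied at level $\kk^o$. By Proposition \ref{prop:monodromy} at level $\kk^o$,
$$\cM_{T^{\kk^o}_{\mu,0},T^{\kk^o}_{0,\lambda}}=e^{-2\pi i(\mu,\lambda)}\Id .$$
This is the identity for $\mu\in P_+\cap Q$, since $(Q,P)\subset\Z$.

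\emph{Step 2: a nonzero morphism.} Set $N:=\weyl_\lambda^{\kk'}\otimes V_{\lambda^*}$. It is a module for the vertex algebra $A$, hence a local $A$-module in the ind-completion, and it is simple. Apply Theorem \ref{thm:GKO1} with the pair $(\lambda,\lambda^*)$ and note $\lambda+\lambda^*\in Q$. Then the $\nu=0$ summand gives an embedding of $V^\kk(\g)\otimes\W^{\kk^o}(\g)$-modules
$$\weyl_0^\kk\otimes T^{\kk^o}_{0,\lambda}\hookrightarrow N .$$
(The roles of $\kk$ and $\kk^o$ in Theorem \ref{thm:GKO1} are symmetric under \eqref{eq: shifting the level}.) Frobenius reciprocity \eqref{eqn:fro-rec} turns this into a nonzero $A$-module map $\phi\colon \cF_V(T^{\kk^o}_{0,\lambda})\to N$. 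Since $N$ is simple, $\phi$ is surjective.

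\emph{Step 3: injectivity.} Let $K=\ker\phi$, an $A$-submodule. The computation in the proof of Proposition \ref{prop: left inverse by coset} gives
$$\cF_V(T^{\kk^o}_{0,\lambda})\simeq\bigoplus_{\mu\in P_+\cap Q}\weyl_\mu^\kk\otimes\bigl(T^{\kk^o}_{\mu,0}\boxtimes T^{\kk^o}_{0,\lambda}\bigr).$$
Hence $K$ decomposes as $K=\bigoplus_{\mu\in P_+\cap Q}\weyl^\kk_\mu\otimes K_\mu$. Applying $\Com_V$ to $\phi$ gives a map $T^{\kk^o}_{0,\lambda}\simeq\Com_V(\cF_V(T^{\kk^o}_{0,\lambda}))\to\Com_V(N)$. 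This map is nonzero, because it is the adjoint of the embedding from Step 2. As $T^{\kk^o}_{0,\lambda}$ is simple (Theorem \ref{thm: simplicity of T-modules}), the map is injective, so $K_0=\Com_V(K)=0$.

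Suppose $K_\mu\neq0$ for some $\mu$. Since $\mu\in Q$, the summand $\weyl^\kk_{\mu^*}\otimes T^{\kk^o}_{\mu^*,0}$ occurs in $A$. The action of $A$ on $K$ restricts to an intertwining operator
$$(\weyl^\kk_{\mu^*}\otimes T^{\kk^o}_{\mu^*,0})\times(\weyl^\kk_\mu\otimes K_\mu)\to K .$$
Composing this with the projection onto the $\weyl_0^\kk$-isotypic component gives a nonzero contribution to $K_0$, contradicting $K_0=0$. Hence $K=0$ and $\phi$ is an isomorphism.

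I expect this nonvanishing to be the main obstacle. My first attempt would be to factor the operator as $\mathcal{Y}_{\mathrm{aff}}\otimes\mathcal{Y}_{\W}$ and argue separately in each factor:
\begin{itemize}
\item The affine factor $\weyl_{\mu^*}^\kk\times\weyl^\kk_\mu\to\weyl^\kk_0$ is nonzero by rigidity of $\KL^\kk(\g)$ (Example \ref{ex: affine Kazhdan-Lusztig}).
\item The $\W$-factor should be nonzero because $A$ is simple and the action on $K$ is faithful on each summand. This is Proposition \ref{prop:intertwiner} applied to the action of $A$.
\end{itemize}
If that factorisation argument fails, the fallback is to use that $\cF_V(T^{\kk^o}_{0,\lambda})$ is generated by its $\mu=0$ component. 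Then any nonzero $A$-submodule must meet the $\mu=0$ component, by the same invariant-pairing argument.
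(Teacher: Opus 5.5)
Your Steps 1 and 2 are fine, and so is the deduction $K_0=\Com_V(K)=0$ in Step 3. The gap is where you suspected, in ruling out $K_\mu\neq 0$ for $\mu\neq 0$, and neither argument you propose closes it.

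The projected operator takes values in $\weyl^\kk_0\otimes K_0=0$. So the contradiction has to come from computing it inside $\cF_V(T^{\kk^o}_{0,\lambda})$. Concretely, you need the $\W$-part of the multiplication, $T^{\kk^o}_{\mu^*,0}\boxtimes K_\mu\to T^{\kk^o}_{0,\lambda}$, to be nonzero for every nonzero submodule $K_\mu\subset T^{\kk^o}_{\mu,0}\boxtimes T^{\kk^o}_{0,\lambda}$ of this kind.

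Proposition \ref{prop:intertwiner} cannot give this. Applied to the action of $A$, and only if $\weyl^\kk_\mu\otimes K_\mu$ generates $K$, it says the product of $\weyl^\kk_{\mu^*}\otimes T^{\kk^o}_{\mu^*,0}$ with $\weyl^\kk_\mu\otimes K_\mu$ is nonzero in \emph{some} component $\weyl^\kk_\nu$ of $\weyl^\kk_{\mu^*}\boxtimes\weyl^\kk_\mu$. It says nothing about the component $\nu=0$. What you need is the kind of statement that follows from rigidity of $T^{\kk^o}_{\mu,0}$ with dual $T^{\kk^o}_{\mu^*,0}$. The paper only obtains that rigidity as a consequence of the main theorem.

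The fallback fails for the dual reason. That $\cF_V(T^{\kk^o}_{0,\lambda})$ is generated by its $\mu=0$ part controls quotients, not submodules. To conclude that every nonzero submodule meets the $\mu=0$ part, you would need something like the contragredient $\cF_V(T^{\kk^o}_{0,\lambda})^*$ being generated by its $\mu=0$ part (say $\cF_V(T)^*\simeq\cF_V(T^*)$). You have not shown this.

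The missing ingredient is the one the paper uses: $\cF_V$ takes values in $\KL^{\kk'}(\g)\boxtimes V_Q\Mod$, which is semisimple with simple objects $\weyl^{\kk'}_\alpha\otimes V_\beta$. With this, your $K$ is a direct sum of such simples. Each has $\alpha+\beta\in Q$, since only $\weyl^\kk_\mu$ with $\mu\in Q$ occur in $\cF_V(T^{\kk^o}_{0,\lambda})$. By Theorem \ref{thm:GKO1} each such summand then contains $V^\kk\otimes T^{\kk^o}_{0,\alpha}$, contradicting $K_0=0$.

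Once semisimplicity is available, though, your morphism-plus-kernel construction is unnecessary. The paper computes $\Hom(\cF_V(T^{\kk^o}_{0,\lambda}),\weyl^{\kk'}_\mu\otimes V_\nu)$ directly by Frobenius reciprocity and the branching rule. It is one-dimensional for $(\mu,\nu)=(\lambda,\lambda^*)$ and zero otherwise, and semisimplicity then gives $\cF_V(T^{\kk^o}_{0,\lambda})\simeq\weyl^{\kk'}_\lambda\otimes V_{\lambda^*}$.
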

\begin{proof}
    Recall that $\KL^{\kk'}(\g)\boxtimes V_Q\Mod $ is semisimple with simple objects $\weyl_{\mu}^{\kk'}\otimes V_{\nu}$. Then it follows from Frobenius reciprocity and Corollary \ref{cor:urod} that  
\begin{align*}
    \Hom_{V^{\kk'}\otimes V_Q}(\cF_V(T_{0,\lambda}^{\kk^o}),\weyl_{\mu}^{\kk'}\otimes V_{\nu}) &\simeq \Hom_{\W^{\kk}\otimes \W^{\kk^o}}(V^{\kk}\otimes T_{0,\lambda}^{\kk^o},\weyl_{\mu}^{\kk'}\otimes V_{\nu})\\
    &\simeq \bigoplus_{\mu'\equiv \mu+\nu}\Hom_{\W^{\kk}\otimes \W^{\kk^o}}(V^{\kk}\otimes T_{0,\lambda}^{\kk^o},\weyl_{\mu'}^{\kk}\otimes T^{\kk^o}_{\mu',\mu})\\
    &\simeq \bigoplus_{\mu'\equiv \mu+\nu}\delta_{0,\mu'}\delta_{\lambda,\mu} \Id
\end{align*}
and, hence, $\cF_V(T_{0,\lambda}^{\kk^o})\simeq \weyl_\lambda^{\kk'}\otimes V_{\lambda^*}$.
\end{proof}
Let us introduce the induction functor for the extension 
$$\W^\kk(\g,f)\otimes \W^{\kk^o}(\g)\subset A_f:=\W^{\kk'}(\g,f)\otimes V_Q,$$ 
given by 
\begin{align}\label{eq: induction functor 2}
\begin{array}{cccc}
    \cF_f\colon &(\W^{\kk}(\g,f)\otimes\W^{\kk^o}(\g))\Mod_{C_1}^{0}&\rightarrow& \W^{\kk'}(\g,f)\Mod\boxtimes V_Q\Mod\\ &M&\mapsto &A_f\boxtimes M.
\end{array}
\end{align}
\begin{proof}[Proof of Theorem \ref{thm:fusion, regular case}]
By the associativity of tensor products and the duality in Theorem \ref{thm: simplicity of T-modules}, it suffices to show 
\begin{align}\label{eq: fusions to show}
    T_{0,\lambda}^\kk\boxtimes T_{0,\mu}^\kk\simeq \bigoplus_{\nu\in P_+}c_{\lambda,\mu}^\nu T_{0,\nu}^\kk,\quad T_{\lambda,0}^\kk\boxtimes T_{0,\mu}^\kk\simeq T_{\lambda,\mu}^\kk.
\end{align}
Thanks to Proposition \ref{prop: left inverse by coset}, the first isomorphism of \eqref{eq: fusions to show} follows from 
\begin{align*}
    \cF_V(T_{0,\lambda}^{\kk^o}\boxtimes T_{0,\mu}^{\kk^o})
    &\simeq \cF_V(T_{0,\lambda}^{\kk^o})\boxtimes \cF_V(T_{0,\mu}^{\kk^o})\\
    &\simeq (\weyl_{\lambda}^{\kk'}\otimes V_{\lambda^*})\boxtimes (\weyl_{\mu}^{\kk'}\otimes V_{\mu^*})\\
    &\simeq \bigoplus_{\nu \in P_+}c_{\lambda,\mu}^\nu \cF_V(T_{0,\nu}^{\kk^o})
\end{align*}
after replacing $\kk^o$ with $\kk$, as the functor $\cF_V$ is monoidal and $V^\kk(\g)\otimes T^{\kk^o}_{0,\lambda'}$ with $\lambda\in P_+$ are local modules for $A$ by Proposition \ref{prop:monodromy}. 
Next, we show the second isomorphism.
Note that the case $\lambda \in P_+ \cap Q$ and $\mu \in P_+$ follows from Proposition \ref{prop: identification of induction} and Theorem \ref{thm:GKO1} as 
\begin{align*}
    \cF_V(T^{\kk^o}_{0,\mu})&\simeq \bigoplus_{\lambda \in P_+\cap Q} (\weyl^\kk_\lambda\otimes T^{\kk^o}_{\lambda,0})\boxtimes (V^\kk\otimes T^{\kk^o}_{0,\mu})\\
    &\simeq \bigoplus_{\lambda \in P_+\cap Q} \weyl^\kk_\lambda\otimes (T^{\kk^o}_{\lambda,0}\boxtimes T^{\kk^o}_{0,\mu}).
\end{align*}
and thus $T^{\kk^o}_{\lambda,0}\boxtimes T^{\kk^o}_{0,\mu}\simeq T^{\kk^o}_{\lambda,\mu}$, by replacing $\kk^o$ with $\kk$. 
Now, we have 
\begin{align}\label{eq: identification of induction for W}
    A_f \boxtimes (T_{0,\mu}^{\kk}\otimes T_{0,\lambda}^{\kk^o})\simeq T_{\lambda,\mu}^{\kk'} \otimes V_{\lambda^*+\mu^*}
\end{align}
for $\lambda,\mu \in P_+$ when $f$ is principal.
Indeed, Corollary \ref{cor:urod} implies 
\begin{align*}
    A_f\boxtimes (T_{0,\mu}^{\kk}\otimes T_{0,\lambda}^{\kk^o}) 
    &\simeq \bigoplus_{\mu'\in P_+\cap Q} (T_{\mu',0}^{\kk}\otimes T_{\mu',0}^{\kk^o}) \boxtimes (T_{0,\mu}^{\kk}\otimes T_{0,\lambda}^{\kk^o})\\
    &\simeq \bigoplus_{\mu'\in P_+\cap Q} T_{\mu',\mu}^{\kk}\otimes T_{\mu',\lambda}^{\kk^o}\\
    &\simeq T_{\lambda,\mu}^{\kk'} \otimes V_{\lambda^*+\mu^*}
\end{align*}
as $\W^{\kk'}\otimes \W^{\kk^o}$-modules. Then, the isomorphism \eqref{eq: identification of induction for W} as $\W^{\kk'}\otimes V_Q$-modules follows from the simplicity of $T_{\lambda,\mu}^{\kk'} \otimes V_{\lambda^*+\mu^*}$ and the existence of a non-zero homomorphism by Frobenius reciprocity
\begin{align*}
    \Hom_{\W^{\kk'}\otimes V_Q}&(A_f \boxtimes (T_{0,\mu}^{\kk}\otimes T_{0,\lambda}^{\kk^o}), T_{\lambda,\mu}^{\kk'} \otimes V_{\lambda^*+\mu^*})\\
    &\simeq \Hom_{\W^{\kk}\otimes \W^{\kk^o}}(T_{0,\mu}^{\kk}\otimes T_{0,\lambda}^{\kk^o}, T_{\lambda,\mu}^{\kk'} \otimes V_{\lambda^*+\mu^*})\simeq \C \Id.
\end{align*}
This completes the proof.
\end{proof}

\section{General nilpotent case}
\subsection{Main result}
\begin{theorem}\label{thm:fusion}
    Let $\g$ be of type $ADE$ and $\kappa \in \C \backslash \Q$. 
    \begin{enumerate}[leftmargin=*]
    \item For $\lambda \in P_+$, $\weyl_{\lambda,f}^\kk$ are simple $\W^\kk(\g,f)$-modules.
    \item For $\lambda,\mu \in P_+$,
    \begin{align*}
        \weyl_{\lambda,f}^\kk \boxtimes \weyl_{\mu,f}^\kk  \cong \bigoplus_{\nu \in P_+} c_{\lambda,\mu}^\nu \weyl_{\nu,f}^\kk.
    \end{align*}
    \end{enumerate}   
\end{theorem}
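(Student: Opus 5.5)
We mimic the proof of Theorem \ref{thm:fusion, regular case}, replacing the extension $V^\kk(\g)\otimes\W^{\kk^o}(\g)\subset V^{\kk'}(\g)\otimes V_Q$ by the Urod extension
$$\W^\kk(\g,f)\otimes \W^{\kk^o}(\g)\subset A_f=\W^{\kk'}(\g,f)\otimes V_Q,\qquad \kk'=\kk-1,$$
and working with the induction functor $\cF_f$ of \eqref{eq: induction functor 2}. Proposition \ref{prop:C_1} places every $\weyl^\kk_{\lambda,f}$ in $\W^\kk(\g,f)\Mod_{C_1}$. Theorem \ref{thm: Huang C1} together with \cite{MN} then gives the braided monoidal structure we need. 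Corollary \ref{cor:urod}(1) lets us regard $A_f$ as a commutative algebra in the Ind-completion of $(\W^\kk(\g,f)\otimes\W^{\kk^o}(\g))\Mod_{C_1}$.

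\emph{Simplicity (1).} We argue by induction on the level shift, using that $\kk$ is arbitrary irrational. Assume for the moment that the $\weyl^{\kk'}_{\lambda,f}$ are simple. Corollary \ref{cor:urod}(1) gives
$$\weyl^{\kk'}_{\lambda,f}\otimes V_{Q+\mu}\simeq\bigoplus_{\mu'\equiv\lambda+\mu}\weyl^\kk_{\mu',f}\otimes T^{\kk^o}_{\mu',\lambda},$$
and we take $\lambda=0$. Suppose $\weyl^\kk_{\mu',f}$ had a proper nonzero submodule $N$. Then the $A_f$-submodule generated by $N\otimes T^{\kk^o}_{\mu',0}$ would be a proper submodule of the simple module $\W^{\kk'}(\g,f)\otimes V_{Q+\mu'}$. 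Properness comes from the coset/left-inverse argument of Proposition \ref{prop: left inverse by coset}: its $\W^{\kk^o}$-isotypic part at $T^{\kk^o}_{\mu',0}$ is $N\otimes T^{\kk^o}_{\mu',0}$, by Frobenius reciprocity and the distinctness of the simple modules $T^{\kk^o}_{\mu',0}$ (Theorem \ref{thm: simplicity of T-modules}).

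This argument is circular, since it needs simplicity at level $\kk'$. I would break the circularity with a more direct argument. The first option is to show $\mathrm{End}(\weyl^\kk_{\lambda,f})=\C$. Note that $\weyl^\kk_{\lambda,f}/\W^{\kk,+}_{(-1)}\weyl^\kk_{\lambda,f}\simeq L(\lambda)$ by the proof of Proposition \ref{prop:C_1}, and injectivity of Proposition \ref{prop: BRST reduction for intertwining operators} then controls intertwiners and morphisms. The alternative is the equivariant $\W$-algebra, Theorem \ref{thm: equiv Walg}. $\cdo{G,f}{\kk}$ is simple, and the $V^{\kk_o}$-isotypic components $\weyl^{\kk_o}_\lambda$ are distinct simple modules in the semisimple category $\KL^{\kk_o}(\g)$. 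A standard mirror/dual-pair argument in the style of Theorem \ref{thm:main_thm} then gives that each multiplicity space $\weyl^\kk_{\lambda,f}$ is a simple $\W^\kk(\g,f)$-module. I would use this second route; it is the main obstacle of part (1).

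\emph{Fusion (2).} First we determine the induced modules:
$$\cF_f(\weyl^\kk_{\lambda,f}\otimes T^{\kk^o}_{0,\mu})\simeq\weyl^{\kk'}_{\mu,f}\otimes V_{\lambda^*+\mu^*}\quad\text{(up to the correct labels).}$$
This should follow as in \eqref{eq: identification of induction for W}. Corollary \ref{cor:urod}(1) decomposes $A_f$, and Theorem \ref{thm:fusion, regular case} evaluates the products $T^{\kk^o}_{\mu',0}\boxtimes T^{\kk^o}_{0,\mu}\simeq T^{\kk^o}_{\mu',\mu}$. The fusion $\weyl^\kk_{\mu',f}\boxtimes\weyl^\kk_{\lambda,f}$, which is not yet known, is avoided by taking $\lambda=0$. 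We then match the result with Corollary \ref{cor:urod}(1) using Frobenius reciprocity and simplicity.

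With this in hand we argue as in the proof of Theorem \ref{thm:fusion, regular case}. Locality of the relevant objects follows from Proposition \ref{prop:monodromy}-type conformal weight computations via the balancing axiom \eqref{eq: balancing axiom}. Monoidality of $\cF_f$ then gives
$$\cF_f\big((\W^\kk\!\otimes T^{\kk^o}_{0,\lambda})\boxtimes(\W^\kk\!\otimes T^{\kk^o}_{0,\mu})\big)\simeq(\weyl^{\kk'}_{\lambda,f}\otimes V_{\lambda^*})\boxtimes(\weyl^{\kk'}_{\mu,f}\otimes V_{\mu^*}).$$
This transfers the known fusion of $T^{\kk^o}_{0,\lambda}$, given by Theorem \ref{thm:fusion, regular case}, to the fusion of the $\weyl^{\kk'}_{\cdot,f}$. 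Since $\kk$ is arbitrary irrational, replacing $\kk'$ by $\kk$ gives the claim.

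One step needs extra care: the direct sum must be identified and not merely bounded. For this I would combine the lower bound from Proposition \ref{prop: BRST reduction for intertwining operators} (injectivity on intertwiners, so $\dim I\ge c^\nu_{\lambda,\mu}$) with the upper bound coming from the induction computation. The second delicate point is verifying that the tensor product lands in a semisimple subcategory. Here I would use that $L_0$ acts semisimply on the fusion product, since it is generated by a simple image as in the proof of Proposition \ref{prop:monodromy}.
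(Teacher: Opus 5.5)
Your part (2) is the paper's argument: monoidality of $\cF_f$, Theorem \ref{thm:fusion, regular case}, and the identification $\cF_f(\W^\kk_f\otimes T^{\kk^o}_{0,\lambda})\simeq\weyl^{\kk'}_{\lambda,f}\otimes V_{\lambda^*}$. But you obtain that identification ``by Frobenius reciprocity and simplicity'', which needs one of the two sides to be simple, so everything rests on part (1). There the proposal has a genuine gap.

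\emph{Option (a).} This cannot work. $\mathrm{End}(\weyl^\kk_{\lambda,f})=\C$ does not imply simplicity; Verma modules have scalar endomorphisms without being simple. The injectivity in Proposition \ref{prop: BRST reduction for intertwining operators} only gives lower bounds.

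\emph{Option (b), the one you choose.} This is circular. Theorem \ref{thm:main_thm} takes (M2)--(M4) as \emph{input}, in particular that the multiplicity spaces are finite semisimple (or, with the roles swapped, distinct simple objects of a semisimple rigid category). It also needs the factorization hypothesis on intertwining operators in $\cC_{U\otimes V}$. For the pair $(\W^\kk(\g,f),V^{\kk_o}(\g))$ inside $\cdo{G,f}{\kk}$, the paper verifies exactly these hypotheses \emph{using} Theorem \ref{thm:fusion}. You give no independent argument that simplicity of $\cdo{G,f}{\kk}$ alone forces simplicity of the multiplicity spaces outside that framework.

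\emph{Your first, abandoned argument.} It also fails, but not for the reason you give. With $\lambda=0$ it only uses simplicity of the vertex algebra $\W^{\kk'}(\g,f)$. The real problem is the properness claim. The $T^{\kk^o}_{\mu',0}$-isotypic part of the $A_f$-submodule generated by $N\otimes T^{\kk^o}_{\mu',0}$ also receives images of $\weyl^\kk_{\nu,f}\times N$ for every $\nu\in P_+\cap Q$ with $c^{\mu'}_{\nu,\mu'}\neq0$ (e.g.\ $\nu$ the highest root). These need not lie in $N$. Proposition \ref{prop: left inverse by coset} only controls the vacuum component.

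\emph{The missing idea.} Prove simplicity of an \emph{induced} module built from the vacuum, then transport it to level $\kk'$. Put $X=\W^\kk_f\otimes T^{\kk^o}_{0,\lambda}$. It is simple, by simplicity of the vertex algebra $\W^\kk(\g,f)$ and Theorem \ref{thm: simplicity of T-modules}. It generates $\cF_f(X)$. By Corollary \ref{cor:urod}(1) it is the whole $T^{\kk^o}_{0,\lambda}$-isotypic component of $\cF_f(X)\simeq\bigoplus_{\mu\in P_+\cap Q}\weyl^\kk_{\mu,f}\otimes T^{\kk^o}_{\mu,\lambda}$.
\begin{enumerate}
\item A proper submodule $N$ therefore meets $X$ trivially, so $X$ survives in the quotient $M$.
\item Dualizing, with the same decomposition for $\lambda^*$, puts $X^*=\W^\kk_f\otimes T^{\kk^o}_{0,\lambda^*}$ inside $M^*\subset\cF_f(X^*)$. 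Since $X^*$ generates $\cF_f(X^*)$, this forces $N=0$.
\item The nonzero Frobenius-reciprocity map $\cF_f(X)\to\weyl^{\kk'}_{\lambda,f}\otimes V_{\lambda^*}$ is then injective. Both sides have the same $\W^\kk_f\otimes\W^{\kk^o}$-decomposition, so it is an isomorphism.
\end{enumerate}
Hence $\weyl^{\kk'}_{\lambda,f}$ is simple. Since $\kk$ is an arbitrary irrational level, this proves (1) without any circularity, and with it your part (2) goes through as in the paper.
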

\begin{proof}
We may show the assertions for $\kk'=\kk-1$ instead of $\kk$.

\noindent
(1) Let us write $\W^\kk(\g,f)$ as $\W^\kk_f$ and, in particular, $\W^\kk$ when $f$ is principal as before. Consider $\cF_f(\W^\kk_f\otimes T^{\kk^o}_{0,\lambda})$ as in \S \ref{sec: Main results for pincipal case}. 
By Corollary \ref{cor:urod}, 
\begin{align}\label{eq: decomposition in the urod case}
    \cF_f(\W^\kk_f\otimes T^{\kk^o}_{0,\lambda})\simeq \bigoplus_{\mu \in P_+\cap Q}\weyl_{f,\lambda}^\kk \otimes T_{\mu,\lambda}^{\kk^o}
\end{align}
as $\W^\kk_f\otimes \W^{\kk^o}$-modules. 
Let us show first that $\cF_f(\W^\kk_f\otimes T^{\kk^o}_{0,\lambda})$ is a simple $\W^{\kk'}_f\otimes V_Q$-module. 
Otherwise, $\cF_f(\W^\kk_f\otimes T^{\kk^o}_{0,\lambda})$ admits a short exact sequence
\begin{align*}
    0 \rightarrow N \rightarrow \cF_f(\W^\kk_f\otimes T^{\kk^o}_{0,\lambda}) \rightarrow M \rightarrow 0
\end{align*}
for some proper submodule $N$. Since $\cF_f(\W^\kk_f\otimes T^{\kk^o}_{0,\lambda})$ is generated by $\W^\kk_f\otimes T^{\kk^o}_{0,\lambda}$, it follows that $N \cap (\W^\kk_f\otimes T^{\kk^o}_{0,\lambda^*})=0$ and thus $\W^\kk_f\otimes T^{\kk^o}_{0,\lambda}\subset M$. By dualizing the sequence, we obtain 
\begin{align*}
    0 \rightarrow M^* \rightarrow \cF_f(\W^\kk_f\otimes T^{\kk^o}_{0,\lambda^*}) \rightarrow N^* \rightarrow 0.
\end{align*}
It follows from \eqref{eq: decomposition in the urod case} with $\lambda$ replaced by $\lambda^*$ that $\W^\kk_f\otimes T^{\kk^o}_{0,\lambda^*}\subset M^*$ and thus $M^*= \cF_f(\W^\kk_f\otimes T^{\kk^o}_{0,\lambda^*})$. Hence, $M\simeq \cF_f(\W^\kk_f\otimes T^{\kk^o}_{0,\lambda})$, a contradiction. Therefore, $\cF_f(\W^\kk_f\otimes T^{\kk^o}_{0,\lambda})$ is a simple $\W^{\kk'}_f\otimes V_Q$-module.
Next, we show $\cF_f(\W^\kk_f\otimes T^{\kk^o}_{0,\lambda})$ is isomorphic to $\weyl_{\lambda, f}^{\kappa'} \otimes  V_{\lambda^*}$ as $\W^{\kk'}_f\otimes V_Q$-modules. 
Indeed, there is a non-zero homomorphism thanks to Corollary \ref{cor:urod} and Frobenius reciprocity:
\begin{align*}
    \Hom_{\W^{\kk'}_f\otimes V_Q}&(\cF_f(\W^\kk_f\otimes T^{\kk^o}_{0,\lambda}),\weyl_{\lambda, f}^{\kappa'} \otimes  V_{\lambda^*})\\
    &\simeq \bigoplus_{\mu \in P_+\cap Q} \Hom_{\W^{\kk}_f\otimes \W^{\kk^o}}(\W^\kk_f\otimes T^{\kk^o}_{0,\lambda}, \weyl_{\mu, f}^{\kappa} \otimes  T_{\mu,\lambda}^{\kk^o}) \simeq \C \Id.
\end{align*}
Since $\cF_f(\W^\kk_f\otimes T^{\kk^o}_{0,\lambda})$ is a simple $\W^{\kk'}_f\otimes V_Q$-module and is isomorphic to $\weyl_{\lambda, f}^{\kappa'} \otimes  V_{\lambda^*}$ as $\W^{\kk}_f\otimes \W^{\kk^o}$-modules, the non-zero homomorphism is an isomorphism. 
Thus, $\weyl_{\lambda, f}^{\kappa'}$ is a simple $\W^\kk_f$-module since $\weyl_{\lambda, f}^{\kappa'} \otimes  V_{\lambda^*}\simeq \cF_f(\W^{\kk'}_f\otimes T^{\kk^o}_{0,\lambda})$ is simple.

\noindent
(2) The assertion follows from the isomorphisms $\weyl_{\lambda, f}^{\kappa'} \otimes  V_{\lambda^*}\simeq \cF_f(\W^\kk_f\otimes T^{\kk^o}_{0,\lambda})$ and the fact that the induction functor $\cF_f$ is monoidal:
\begin{align*}
    (\weyl_{\lambda,f}^{\kk'}\otimes V_{\lambda^*}) \boxtimes (\weyl_{\mu,f}^{\kk'} \otimes V_{\mu^*})
    &\simeq \cF_f(\W^{\kk'}_f\otimes T^{\kk^o}_{0,\lambda}) \boxtimes  \cF_f(\W^{\kk'}_f\otimes T^{\kk^o}_{0,\mu})\\
    &\simeq \cF_f\left((\W^{\kk'}_f\otimes T^{\kk^o}_{0,\lambda}) \boxtimes  (\W^{\kk'}_f\otimes T^{\kk^o}_{0,\mu})\right)\\
    &\simeq \bigoplus_{\nu \in P_+}c_{\lambda,\mu}^\nu \cF_f(\W^{\kk'}_f\otimes T^{\kk^o}_{0,\nu})\\
    &\simeq \bigoplus_{\nu \in P_+}c_{\lambda,\mu}^\nu \weyl_{\nu,f}^{\kk'}\otimes V_{\nu^*}.
\end{align*}
In the above, we have used Theorem \ref{thm:fusion, regular case}. This completes the proof.
\end{proof}

\subsection{Equivalence of categories}
Let us denote the full subcategory $\KL_f^\kappa(\g)$ of $\W^\kk(\g,f)\Mod_{C_1}$ consisting of direct sums of the simple $\W^\kk(\g,f)$-modules $\weyl_{\lambda,f}^\kk$ ($\lambda \in P_+$).
By Theorem \ref{thm:fusion}, it is a (semisimple) rigid braided tensor category, whose branching rules of tensor products are the same as the Kazhdan-Lusztig category $\KL^\kk(\g)$ by the correspondence
$$H_f^0\colon \KL^\kk(\g) \xrightarrow{\simeq} \KL_f^\kappa(\g),\quad \weyl_\lambda^\kk \mapsto \weyl_{\lambda,f}^\kk.$$

 \begin{theorem}\label{thm:main}
 Let $\g$ be of type $ADE$ and $\kappa \in \C \backslash \Q$. Then, the BRST reduction $H_f^0\colon \KL^\kk(\g) \xrightarrow{\simeq} \KL_f^\kappa(\g)$ is an equivalence of braided tensor categories. In particular, $\KL_f^\kappa(\g)$ is rigid with  the dual of $\weyl^\kk_{\lambda,f}$ being $\weyl^\kk_{\lambda^*,f}$.
\end{theorem}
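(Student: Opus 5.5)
Our plan is to realize $H_f^0$ as a composite of functors already known to be braided tensor, using the commutative algebra objects $A=V^{\kk'}(\g)\otimes V_Q$ and $A_f=\W^{\kk'}(\g,f)\otimes V_Q$ from Theorem \ref{thm:GKO1} and Corollary \ref{cor:urod}. To keep the levels straight, we prove the statement at level $\kk'=\kk-1$, as in the proof of Theorem \ref{thm:fusion}.

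\textbf{Step 1: induction functors.} Consider the full subcategory $\cD\subset \W^{\kk^o}(\g)\Mod_{C_1}$ of direct sums of $T^{\kk^o}_{0,\lambda}$, $\lambda\in P_+$. By Theorem \ref{thm:fusion, regular case}, applied at level $\kk^o$, $\cD$ is a semisimple braided tensor subcategory with the same fusion rules as $\KL^{\kk'}(\g)$. By Proposition \ref{prop:monodromy}, the objects $V^\kk\otimes T^{\kk^o}_{0,\lambda}$ centralize $A$, and likewise $\W^\kk_f\otimes T^{\kk^o}_{0,\lambda}$ centralize $A_f$, since $\W^\kk_f$ is the unit. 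Theorem \ref{sum1}(2) then makes the following braided tensor functors:
\[
\Phi:=\cF_V(V^\kk\otimes -)\colon \cD\to (V^{\kk'}\otimes V_Q)\Mod,
\]
\[
\Phi_f:=\cF_f(\W^\kk_f\otimes -)\colon \cD\to (\W^{\kk'}_f\otimes V_Q)\Mod .
\]
By Proposition \ref{prop: identification of induction} and the proof of Theorem \ref{thm:fusion}(1),
\[
\Phi(T^{\kk^o}_{0,\lambda})\simeq \weyl^{\kk'}_\lambda\otimes V_{\lambda^*},\qquad \Phi_f(T^{\kk^o}_{0,\lambda})\simeq \weyl^{\kk'}_{\lambda,f}\otimes V_{\lambda^*}.
\]
Both images are semisimple with distinct simple objects. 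Hence $\Phi$ and $\Phi_f$ are fully faithful, and they give braided tensor equivalences onto
\[
\Big(\KL^{\kk'}(\g)\boxtimes \vect_{P/Q}\Big)^{\text{diag}},\qquad \Big(\KL^{\kk'}_f(\g)\boxtimes \vect_{P/Q}\Big)^{\text{diag}},
\]
the full subcategories of sums of $X_\lambda\otimes V_{\lambda^*}$. Each of these is the tensor product of the category in question with the pointed braided category of $V_Q$-modules, restricted to the sector where the $P/Q$-gradings match.

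\textbf{Step 2: stripping off $V_Q$.} The functor $X\mapsto X\otimes V_{\lambda^*}$, with $\lambda$ the $P/Q$-grading of $X$, is an equivalence
\[
\KL^{\kk'}(\g)\xrightarrow{\sim}\Big(\KL^{\kk'}(\g)\boxtimes\vect_{P/Q}\Big)^{\text{diag}}.
\]
It is a tensor functor up to the abelian $3$-cocycle of $\vect_{P/Q}$, and exactly the same cocycle appears on the $\W$-side. Composing,
\[
\Psi:=\Phi_f\circ\Phi^{-1}
\]
is a braided tensor equivalence sending $\weyl^{\kk'}_\lambda\otimes V_{\lambda^*}\mapsto \weyl^{\kk'}_{\lambda,f}\otimes V_{\lambda^*}$. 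It commutes with the tensoring of the $V_Q$ factor, so it descends to a braided tensor equivalence $\KL^{\kk'}(\g)\to\KL^{\kk'}_f(\g)$ with $\weyl_\lambda\mapsto\weyl_{\lambda,f}$. To make the descent rigorous, we would observe that both $\Phi$ and $\Phi_f$ factor through the same twisting by $\vect_{P/Q}$, so the cocycles cancel.

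\textbf{Step 3: identifying the equivalence with $H_f^0$.} This is the main obstacle. We expect $H_f^0$ commutes with the inductions; that is, $H_f^0$ applied in the $V^{\kk'}$-variable of $A$ gives $A_f$, by \cite{ACF}. Moreover, $H_f^0(\cF_V(X))\simeq \cF_f(H_f^0 X)$ naturally in $X$, compatibly with the monoidal structure constraints. The monoidal compatibility is where we would use Proposition \ref{prop: BRST reduction for intertwining operators}: $H_f^0$ sends the intertwining operator defining $M_1\boxtimes M_2$ to a nonzero intertwining operator, which gives a natural map $H_f^0(M_1)\boxtimes H_f^0(M_2)\to H_f^0(M_1\boxtimes M_2)$. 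Injectivity, together with matching fusion rules (Theorem \ref{thm:fusion}) and semisimplicity, makes this map an isomorphism. Compatibility with associativity and braiding holds because both sides are computed by the same composed and transposed intertwining operators, which $H_f^0$ preserves. Here $\Omega_r$ commutes with $H_f^0$ because the BRST construction preserves $L_{-1}$ and the formal-variable substitution.

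Finally, rigidity of $\KL^{\kk'}_f(\g)$ with $(\weyl_{\lambda,f})^*\simeq\weyl_{\lambda^*,f}$ follows by transporting the duals of Example \ref{ex: affine Kazhdan-Lusztig} along the equivalence.
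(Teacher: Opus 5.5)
Your Step 3 is, in substance, the paper's proof. There too the natural map $H_f^0(\weyl_\lambda^\kk)\boxtimes H_f^0(\weyl_\mu^\kk)\to H_f^0(\weyl_\lambda^\kk\boxtimes\weyl_\mu^\kk)$ comes from Proposition \ref{prop: BRST reduction for intertwining operators}. It is an isomorphism by injectivity together with the fusion rules of Theorem \ref{thm:fusion}, and compatibility with associativity, units and braiding is checked at the level of intertwining operators. That argument stands on its own. So Steps 1--2, and the natural isomorphism $H_f^0(\cF_V(X))\simeq\cF_f(H_f^0X)$ that you assert without proof, are not needed for the theorem as stated.

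Where you genuinely differ is in the independent existence argument.
\begin{itemize}
\item The paper gets an abstract braided equivalence from the mirror equivalence (Theorem \ref{thm:main_thm}), applied to the equivariant $\W$-algebras $\cdo{G}{\kk}$ and $\cdo{G,f}{\kk}$ and passing through $\KL^{\kk_o}(\g)$.
\item You instead compare the two Urod/GKO induction functors $\Phi,\Phi_f$ out of the same category of $T^{\kk^o}_{0,\lambda}$'s. This is essentially Proposition \ref{prop: first comparison} for $f=0$ and for general $f$. You then remove the lattice factor.
\end{itemize}
Your route avoids equivariant $\W$-algebras and McRae's theorem, and stays within the machinery already used for Theorems \ref{thm:fusion, regular case} and \ref{thm:fusion}.

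The stripping step does need to be stated correctly. $\Psi$ is only defined on the diagonal part, so it does not ``commute with tensoring by $V_Q$''. Instead, observe that $(\mathcal C\boxtimes\vect_{P/Q})^{P/Q}$ is $\mathcal C$ with associator and braiding rescaled by the abelian $3$-cocycle of $V_Q\mod$, evaluated on $P/Q$-degrees. Since $\Psi$ preserves degrees, it remains a braided tensor equivalence after untwisting both sides by the inverse cocycle. Equivalently, tensor with the reverse of $V_Q\mod$ and restrict to the diagonal, as in the proof of Corollary \ref{cor:main}.

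Both existence arguments only produce some braided equivalence with $\weyl_\lambda^\kk\mapsto\weyl_{\lambda,f}^\kk$. That this equivalence is $H_f^0$ itself comes from the intertwining-operator argument.
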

\begin{proof}
By Proposition \ref{prop: BRST reduction for intertwining operators} and Theorem \ref{thm:fusion}, we have the functorial isomorphisms 
$$H_f^0\colon I_{V^\kk(\g)}\binom{\weyl_{\nu}^\kk}{\weyl_{\lambda}^\kk\ \weyl_{\mu}^\kk}\xrightarrow{\simeq} I_{\W^\kk(\g,f)}\binom{\weyl_{\nu,f}^\kk}{\weyl_{\lambda,f}^\kk\ \weyl_{\mu,f}^\kk},$$
which give isomorphisms
\begin{align}\label{eq: tensor isom}
   H_f^0(\weyl_{\lambda}^\kk)\boxtimes H_f^0(\weyl_{\mu}^\kk)  \xrightarrow{\simeq} H_f^0(\weyl_{\lambda}^\kk\boxtimes \weyl_{\mu}^\kk).
\end{align}
Note that the definition $\W^\kk(\g,f)=H_f^0(V^\kk(\g))$ provides the identification of the unit objects. 
It is straightforward to check that the functorial isomorphisms \eqref{eq: tensor isom} satisfy the compatibility with the associativity, the left/ right unit identities, and the braiding isomorphism, which are identical at the level of intertwining operators, and thus vertex tensor categories. Hence, $H_f^0$ is a braided tensor equivalence. 

Indeed, in order to show the existence of braided tensor equivalence, we only have to apply the mirror equivalence in Theorem \ref{thm:main_thm} for the equivariant $\W$-algebras 
$$\cdo{G}{\kappa},\qquad \cdo{G,f}{\kappa}$$
as conformal extensions of the tensor products of pairs of simple vertex algebras 
$$(U,V)=(V^{\kk}(\g), V^{\kk_o}(\g)), (\W^{\kk}(\g,f), V^{\kk_o}(\g))$$
where the level $\kk_o \in \C \backslash \Q$ is given by the formula \eqref{eq: opposit level}.
Note that the condition (M1) follows from Theorem \ref{thm: equiv Walg} and (M2)-(M4) from Example \ref{ex: affine Kazhdan-Lusztig} and Theorem \ref{thm:fusion} by setting 
$\sC_U=\KL^\kk(\g), \KL^\kk_f(\g)$ and $\sC_V=\KL^{\kk_o}(\g)$. 
Moreover, the condition in Theorem \ref{thm:main_thm} on intertwining operators is also satisfied by Theorem \ref{thm:fusion}. Hence, Theorem \ref{thm:main_thm} implies that there are braided-reverse tensor equivalences 
\begin{align*}
    \KL^\kk(\g) \overset{\simeq}{\longleftarrow}\KL^{\kk_o}(\g) \overset{\simeq}{\longrightarrow}\KL^\kk_f(\g),
\end{align*}
and thus the assertion of the existence of a braided tensor equivalence.
\end{proof}
\begin{corollary}
    For $\g$ of type $ADE$, and $\kk\in \C \backslash \Q$, the braided tensor category of $\W^\kk(\g)$-modules consisting of the direct sums of $T^\kk_{\lambda,\mu}$ $(\lambda,\mu \in P_+)$ is rigid with the dual of $T^\kk_{\lambda,\mu}$ being $T^\kk_{\lambda^*,\mu^*}$. 
\end{corollary}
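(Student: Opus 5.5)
Our strategy is to deduce the corollary from Theorem \ref{thm:fusion, regular case}, which already gives the fusion rules, together with rigidity of the Kazhdan--Lusztig category $\KL^\kk(\g)$ transported to the principal $\W$-algebra side. Let $\mathcal{T}^\kk$ denote the full subcategory of $\W^\kk(\g)\Mod_{C_1}$ consisting of finite direct sums of the $T^\kk_{\lambda,\mu}$. These objects are $C_1$-cofinite by Proposition \ref{Main A}, and simple by Theorem \ref{thm: simplicity of T-modules}. Theorem \ref{thm:fusion, regular case} shows that $\mathcal{T}^\kk$ is closed under $\boxtimes$, so it is a semisimple braided monoidal subcategory. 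By semisimplicity, rigidity reduces to showing that each simple $T^\kk_{\lambda,\mu}$ has a dual.

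The first step is a factorization. Theorem \ref{thm:fusion, regular case} gives
\[
T^\kk_{\lambda,\mu}\simeq T^\kk_{\lambda,0}\boxtimes T^\kk_{0,\mu},
\]
and in a monoidal category a tensor product of rigid objects is rigid, with $(X\boxtimes Y)^*\simeq Y^*\boxtimes X^*$. It therefore suffices to show that $T^\kk_{0,\mu}$ and $T^\kk_{\lambda,0}$ are rigid with duals $T^\kk_{0,\mu^*}$ and $T^\kk_{\lambda^*,0}$. The claimed dual then follows from $T^\kk_{0,\mu^*}\boxtimes T^\kk_{\lambda^*,0}\simeq T^\kk_{\lambda^*,\mu^*}$; the order of factors is harmless because of the braiding.

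The second step handles $T^\kk_{0,\mu}$. Take $f$ principal in Theorem \ref{thm:main}, so that $\weyl^\kk_{\mu,f}=H^0_f(\weyl^\kk_\mu)=T^\kk_{\mu,0}$. Theorem \ref{thm:main} then says that the full subcategory $\KL^\kk_f(\g)$ spanned by the $T^\kk_{\lambda,0}$ is braided equivalent to $\KL^\kk(\g)$ and is rigid, with $(T^\kk_{\lambda,0})^*=T^\kk_{\lambda^*,0}$. For the objects $T^\kk_{0,\mu}$ we use Feigin--Frenkel duality. In the simply-laced case $\check\g=\g$, and Theorem \ref{thm: simplicity of T-modules} gives $T^\kk_{0,\mu}\simeq T^{\check\kk}_{\mu,0}$ under an isomorphism $\W^\kk(\g)\simeq\W^{\check\kk}(\g)$ of vertex algebras, with $\check\kk$ again irrational. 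This isomorphism preserves conformal vectors, and hence induces a braided tensor equivalence of the corresponding categories of $C_1$-cofinite modules. Applying Theorem \ref{thm:main} at level $\check\kk$ therefore gives rigidity of $T^\kk_{0,\mu}$ with dual $T^\kk_{0,\mu^*}$.

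The main point to check is that duals computed in the subcategories carry over correctly. A dual in the full subcategory $\KL^\kk_f(\g)$ is automatically a dual in $\mathcal{T}^\kk$, since the evaluation and coevaluation maps and the zigzag identities live in the same ambient braided monoidal category $\W^\kk(\g)\Mod_{C_1}$, and the inclusion is full and monoidal. We also need the dual to be the contragredient. This holds because, for simple objects, the contragredient module is the unique candidate: it is characterized by $\Hom(X\boxtimes Y,\W^\kk)\neq 0$. The fusion rule $T^\kk_{\lambda,\mu}\boxtimes T^\kk_{\lambda^*,\mu^*}\supset T^\kk_{0,0}$ from Theorem \ref{thm:fusion, regular case} identifies this candidate as $T^\kk_{\lambda^*,\mu^*}$.
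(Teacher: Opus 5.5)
Your proposal is correct and follows essentially the same route as the paper: rigidity of $T^\kk_{\lambda,0}$ comes from Theorem \ref{thm:main} with $f$ principal, and rigidity of $T^\kk_{0,\mu}$ comes from Feigin--Frenkel duality (Theorem \ref{thm: simplicity of T-modules}); the factorization $T^\kk_{\lambda,\mu}\simeq T^\kk_{\lambda,0}\boxtimes T^\kk_{0,\mu}$ from Theorem \ref{thm:fusion, regular case}, together with the fact that tensor products of rigid objects are rigid, then finishes the argument. Your extra remarks, on duals in full monoidal subcategories and on pinning down the dual via the fusion rule, are harmless elaborations of points the paper leaves implicit.
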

\begin{proof}
By Theorem \ref{thm:main}, $T^\kk_{\lambda,0}$ is rigid with $T^\kk_{\lambda^*,0}$ its dual, and so is $T^\kk_{0,\mu}$ with $T^\kk_{0,\mu^*}$ by Theorem \ref{thm: simplicity of T-modules}.
Then, since $T^\kk_{\lambda,\mu}\simeq T^\kk_{\lambda,0}\boxtimes T^\kk_{0,\mu}$ by Theorem \ref{thm:fusion, regular case}, it is rigid with $T^\kk_{\lambda^*,\mu^*}$ its dual, see e.g. \cite[Chapter 2.10]{EGNO}.
\end{proof}
\subsection{Comparison at different levels}
Thanks to Theorem \ref{thm:main}, the induction functor $\cF_f$ in \eqref{eq: induction functor 2} can be used to compare the Kazhdan-Lusztig categories $\KL^\kk_f(\g)$ explicitly.
We note that by the braided tensor equivalence $\KL^\kk_f(\g)\simeq U_q(\g)\mod$ in Example \ref{ex: affine Kazhdan-Lusztig}, we may compare them categorically in principle. However, there is a subtlety on the braided tensor structure on $U_q(\g)\mod$, that is, the choice of an $R$-matrix used for the braiding isomorphisms $L_q(\lambda)\otimes L_q(\mu)\simeq L_q(\mu)\otimes L_q(\lambda)$. This is not determined by the value $q=\exp{\left(\frac{1}{(\kk+h^\vee)}\right)}$, but rather the choice of a $|P/Q|$-root of $q$. The effect of different choices is rephrased as coupling with the category $V_Q\mod$ appearing in the functor $\cF_f$.  

By composing the duality in Theorem \ref{thm: simplicity of T-modules} and the functor $\cF_f$ in \eqref{eq: induction functor 2}, we obtain a braided tensor functor 
\begin{align*}
\Psi\colon \KL^{\check{\kk}^o}_{f_{\pr}}(\g) \hookrightarrow \W^{\kk^o}(\g)\Mod_{C_1}^{0} \overset{A_f \boxtimes (\W_f^\kk \otimes -)}{\longrightarrow} \KL^{\kk'}_f(\g)\boxtimes V_Q\mod
\end{align*}
where $f_{\pr}$ is a principal nilpotent element and $f$ is a general one.
By rewriting the levels $\check{\kk}^o$ and $\kk^o$ by $\kk$ and $\kk_1$, the latter is determined by the relation
\begin{align*}
    \frac{1}{\kk+h^\vee}=\frac{1}{\kk_1+h^\vee}+1.
\end{align*}
Recall that $V_Q\mod$ has simple objects $V_{\lambda}$ with $\lambda \in P/Q$ and is indeed braided tensor equivalent to the category $\vect_{P/Q}$ of the $P/Q$-graded spaces with braiding determined by the quadratic form $\mathrm{q}\colon P/Q\rightarrow \C^\times$ such that $\mathrm{q}(\lambda)\mapsto e^{2\pi i (\lambda,\lambda)}$. 
On the other hand, $\KL^{\kk'}_f(\g)\simeq \KL^{\kk'}(\g)$ has a canonical $P/Q$-grading.
It follows from Proposition \ref{prop: identification of induction} and Corollary \ref{cor:urod} that $\Psi(T^\kk_{0,\lambda})\simeq \weyl^{\kk_1}_{\lambda,f}\otimes V_{\lambda^*}$ and thus $\Psi$ induces an equivalence onto the trivial $P/Q$-grading part:
\begin{proposition}\label{prop: first comparison}
    For $\g$ of type $ADE$ and $\kk \in \C \backslash \Q$, $\Psi$ gives an braided tensor equivalence
    \begin{align*}
        \Psi\colon \KL^{\kk}_{f_{\pr}}(\g) \xrightarrow{\simeq} \left(\KL^{\kk_1}_f(\g) \boxtimes \vect_{P/Q}\right)^{P/Q}.
    \end{align*}
\end{proposition}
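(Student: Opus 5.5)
The functor $\Psi$ is the composite of three pieces, and I would check each in turn. The first is the Feigin--Frenkel identification $\KL^{\check\kk^o}_{f_{\pr}}(\g)\simeq$ the full subcategory of $\W^{\kk^o}(\g)\Mod_{C_1}$ of direct sums of the $T^{\kk^o}_{0,\lambda}$ (Theorem \ref{thm: simplicity of T-modules}). The second is $M\mapsto \W^\kk_f\otimes M$. The third is the induction $\cF_f$ of \eqref{eq: induction functor 2}.

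\textbf{Step 1: braided monoidality.} By Proposition \ref{prop:monodromy}, $\cM_{T^{\kk^o}_{\mu,0},T^{\kk^o}_{0,\lambda}}=e^{-2\pi i(\mu,\lambda)}\Id$, and this is $\Id$ for $\mu\in P_+\cap Q$. Hence every $\W^\kk_f\otimes T^{\kk^o}_{0,\lambda}$ centralizes $A_f=\bigoplus_{\mu\in P_+\cap Q}\weyl^\kk_{\mu,f}\otimes T^{\kk^o}_{\mu,0}$. Here I use that the monodromy on the $\W^\kk_f$-factor is trivial because $\W^\kk_f$ is the unit. So these objects lie in the centralizer subcategory $\cC^0$, and Theorem \ref{sum1}(2) (in its direct-limit version of \cite{CMY}) shows that $\cF_f$ restricts to a braided tensor functor into local $A_f$-modules. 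By Theorem \ref{sum1}(3), local modules are $\W^{\kk_1}_f\otimes V_Q$-modules, where $\kk'=\kk-1$ and $\kk_1$ is obtained after renaming levels. Tensoring with the unit $\W^\kk_f$ is braided monoidal, and Feigin--Frenkel duality is an isomorphism of vertex algebras, so it preserves intertwining operators. Therefore $\Psi$ is braided monoidal.

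\textbf{Step 2: the image and full faithfulness.} Step (1) of the proof of Theorem \ref{thm:fusion} gives
\[
\Psi(T_{0,\lambda})\simeq \weyl^{\kk_1}_{\lambda,f}\otimes V_{\lambda^*},
\]
and these are simple and pairwise non-isomorphic. Both categories are semisimple. The target $(\KL^{\kk_1}_f(\g)\boxtimes\vect_{P/Q})^{P/Q}$ is the diagonal-degree-zero part: its simple objects are $\weyl^{\kk_1}_{\lambda,f}\otimes V_{\nu}$ with $\nu\equiv-\lambda\equiv\lambda^*$ modulo $Q$. So the simple objects of the source, indexed by $\lambda\in P_+$, map bijectively onto the simple objects of the target. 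Here I use the identification $V_Q\mod\simeq\vect_{P/Q}$, together with the $P/Q$-grading on $\KL^{\kk_1}_f(\g)$ given by $\lambda \bmod Q$. Since $\Psi$ is additive between semisimple categories, sends simples to simples bijectively, and satisfies $\End=\C$ on both sides, it is fully faithful and essentially surjective. Together with Step 1, this gives a braided tensor equivalence.

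\textbf{Main obstacle.} The delicate point is Step 1. First, $\cF_f$ is a priori only defined on the $C_1$-cofinite category's Ind-completion, and $A_f$ is a commutative algebra there, so Theorem \ref{sum1} must be applied in the setting of \cite{CMY, MN}. Second, one must verify that the image lands in local modules in the exact sense required, which I would check via the twist computation exactly as in Proposition \ref{prop:monodromy}. A lesser check is that $\lambda^*\equiv-\lambda$ modulo $Q$, so that the image really sits in the trivial diagonal $P/Q$-degree.
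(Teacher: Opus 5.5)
Your proposal is correct and follows the paper's argument. $\Psi$ is braided monoidal because it is Feigin--Frenkel duality composed with the induction $\cF_f$, restricted to objects that centralize $A_f$ (Proposition \ref{prop:monodromy}), and the identification $\Psi(T_{0,\lambda})\simeq\weyl^{\kk_1}_{\lambda,f}\otimes V_{\lambda^*}$ then matches simples bijectively with those of the trivial-$P/Q$-degree part; you additionally make explicit the semisimplicity/full-faithfulness step and the congruence $\lambda^*\equiv-\lambda \pmod Q$, which the paper leaves implicit.
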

More generally, let $\vect_{P/Q}^m$ denote the braided tensor category of the $P/Q$-graded vector spaces with braiding determined by the quadratic form $\mathrm{q}^m(\lambda)\mapsto e^{2m\pi i (\lambda,\lambda)}$ for $m\in \Z$.
Note that it is realized as a subcategory of $V_{mQ}\mod$ consisting of $V_{m\lambda}$  ($\lambda \in P/Q$) when $m>0$. Then, we have the following.

\begin{corollary}\label{cor:main} 
For $\g$ of type $ADE$, $\kk \in \C \backslash \Q$, set $\kk_m \in  \C \backslash \Q$ by $$\frac{1}{\kk + h^\vee} = \frac{1}{\kk_m + h^\vee} +m,\quad (m\in \Z).$$
Then, there exists a braided tensor equivalence 
$$\KL^{\kk}_f(\g) \simeq \left(\KL_{f'}^{\kk_m}(\g)\boxtimes \vect^m_{P/Q}\right)^{P/Q}$$
for arbitrary nilpotent elements $f$ and $f'$.
\end{corollary}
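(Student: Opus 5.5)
Since Theorem \ref{thm:main} makes $\KL^{\kk}_f(\g)\simeq\KL^\kk(\g)\simeq\KL^\kk_{f_{\pr}}(\g)$ braided tensor equivalent for every nilpotent $f$, and likewise $\KL^{\kk_m}_{f'}(\g)\simeq\KL^{\kk_m}_{f}(\g)$ for every $f'$, I will reduce to $f=f_{\pr}$ on the left and an arbitrary $f'$ on the right. The case $m=0$ is trivial: $\vect^0_{P/Q}$ has trivial braiding, and taking $P/Q$-invariants of $\KL\boxtimes\vect_{P/Q}$ just recovers $\KL$. The case $m=1$ is Proposition \ref{prop: first comparison} with $\kk_1$. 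I will then argue by induction on $|m|$, handling $m>0$ and $m<0$ separately.

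For $m>0$, I iterate Proposition \ref{prop: first comparison}. Applying it to $\kk_{m-1}$ gives $\kk_m=(\kk_{m-1})_1$, and therefore
\begin{align*}
\KL^{\kk_{m-1}}_{f_{\pr}}(\g)\simeq\left(\KL^{\kk_m}_{f'}(\g)\boxtimes\vect_{P/Q}\right)^{P/Q}.
\end{align*}
Substituting into the inductive hypothesis $\KL^\kk_{f_{\pr}}(\g)\simeq(\KL^{\kk_{m-1}}_{f_{\pr}}(\g)\boxtimes\vect^{m-1}_{P/Q})^{P/Q}$ yields a nested expression $\big((\KL^{\kk_m}\boxtimes\vect^1)^{P/Q}\boxtimes\vect^{m-1}\big)^{P/Q}$.

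The key step is to identify this nested expression with $(\KL^{\kk_m}\boxtimes\vect^m_{P/Q})^{P/Q}$. Concretely, $\Psi$ sends $\weyl_\lambda$ to $\weyl_\lambda\otimes V_{\lambda^*}$. The ``trivial $P/Q$-grading part'' therefore consists of objects $X_\lambda\otimes V_{-\lambda}$, with the $\KL$-grading matched against the lattice grading. Iterating, $\weyl_\lambda$ goes to $\weyl_\lambda\otimes V_{-\lambda}\otimes V_{-\lambda}$ inside $\KL^{\kk_m}\boxtimes\vect^1\boxtimes\vect^{m-1}$. The diagonal $P/Q\to P/Q\times P/Q$ pulls the quadratic form $\mathrm q\cdot\mathrm q^{m-1}$ back to $\mathrm q^m$, so the diagonal subcategory of $\vect^1\boxtimes\vect^{m-1}$ is braided equivalent to $\vect^m_{P/Q}$. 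This needs the associator on $\vect_{P/Q}$ coming from $V_Q$-modules to match the one on $\vect^m$ determined by $\mathrm q^m$. That holds because pointed braided categories are classified by their quadratic forms (Joyal--Street), and both $V_Q\mod$ and the realization of $\vect^m$ via $V_{mQ}$ are lattice categories.

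For $m<0$, I invert the relation. Setting $\kk'':=\kk_m$, we have $\kk=(\kk'')_{-m}$ with $-m>0$. The positive case then gives
\begin{align*}
\KL^{\kk_m}_{f_{\pr}}(\g)\simeq\left(\KL^{\kk}_{f}(\g)\boxtimes\vect^{-m}_{P/Q}\right)^{P/Q}.
\end{align*}
Tensoring both sides with $\vect^{m}_{P/Q}$ and taking the diagonal $P/Q$-part cancels the two pointed factors, since $\mathrm q^{-m}\mathrm q^{m}=1$ and the diagonal of $\vect^{-m}\boxtimes\vect^m$ is $\vect$ with trivial braiding, equivalently $\mathrm{Rep}$-trivial. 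This recovers $\KL^\kk_f(\g)$ and proves the claim for $m<0$.

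I expect the main obstacle to be making the ``$(\cdot)^{P/Q}$'' operation precise so that these cancellations are honest braided equivalences rather than just matching fusion rules. For this I would define $(\cC\boxtimes\vect^m)^{P/Q}$ as the full subcategory of objects $\bigoplus X_\lambda\otimes\C_{-\lambda}$ with $X_\lambda$ in the $\lambda$-graded part of $\cC$. This is a braided tensor subcategory, because grading is additive under fusion by Example \ref{ex: affine Kazhdan-Lusztig}. The projection to $\cC$ is then a tensor equivalence whose braiding is twisted by the bicharacter $b_m(\lambda,\mu)=e^{2\pi i m(\lambda,\mu)}$. With this description, every step above reduces to multiplicativity of the bicharacters $b_m b_n=b_{m+n}$, together with matching the abelian 3-cocycles, which is again the Joyal--Street step.
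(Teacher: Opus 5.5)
Your proposal is correct and follows essentially the same route as the paper. Both arguments reduce to the principal case via Theorem \ref{thm:main}, obtain $m=1$ from Proposition \ref{prop: first comparison}, iterate it for $m>0$ using the diagonal braided embedding \eqref{eq: diag embedding}, and handle $m<0$ by swapping the roles of $\kk$ and $\kk_m$ and cancelling $\vect^{-m}_{P/Q}\boxtimes\vect^{m}_{P/Q}$ along the diagonal. What you add is a justification, via the Joyal--Street classification of pointed braided categories, of the diagonal equivalence and of the grading-compatibility needed to substitute equivalences inside $(\,\cdot\,)^{P/Q}$; the paper simply asserts both.
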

We note that the case $f=f'=0$ essentially appears in a work of Moriwaki \cite{Mo} through the shifts of chiral differential operators by applying the mirror equivalence Theorem \ref{thm:main_thm}. Thus, the general case also follows from it by applying Theorem \ref{thm:main}. We give here an alternative proof based on Proposition \ref{prop: first comparison}.
\begin{proof}
By Theorem \ref{thm:main}, it suffices to show the case when $f=f'=f_{\pr}$.
Then Theorem \ref{prop: first comparison} gives the assertion for $m=1$.
Note that there are braided tensor equivalences 
\begin{align}\label{eq: diag embedding}
    \vect_{P/Q}^{m+n} \hookrightarrow (\vect_{P/Q}^m \boxtimes \vect_{P/Q}^n),\quad \C_\lambda \mapsto \C_{\lambda}\otimes \C_\lambda 
\end{align}
where $\C_{\lambda}$ is the one-dimensional vector space with grading $\lambda\in P/Q$. 
Now, by iterating Theorem \ref{prop: first comparison} on the right-hand side of the equivalence, we obtain
\begin{align*}
    \KL^{\kk}_{f_{\pr}}(\g) 
    &\simeq \left(\KL_{f_{\pr}}^{\kk_{m}}(\g) \vect^1_{P/Q})\boxtimes \vect^m_{P/Q}\right)^{P/Q}\\
    &\simeq \left((\KL_{f_{\pr}}^{\kk_{m+1}}(\g)\boxtimes \vect_{P/Q}^1)^{P/Q}\boxtimes \vect^m_{P/Q}\right)^{P/Q}\\
    &\simeq \left((\KL_{f_{\pr}}^{\kk_{m+1}}(\g)\boxtimes \vect^{m+1}_{P/Q}\right)^{P/Q},
\end{align*}
which proves the assertion for $m>0$.
The assertion $m=0$ holds trivially since $\vect^{0}_{P/Q}$ is the category of $P/Q$-graded vector spaces with trivial braiding. 
The case $m<0$ can be derived similarly from the case $m>0$:
\begin{equation*}
    \begin{split}
\left(\KL_{f_{\pr}}^{\kk}(\g) \boxtimes \vect^{-m}_{P/Q}\right)^{P/Q} &\simeq \left(\left(\KL_{f_{\pr}}^{\kk_m}(\g)\boxtimes \vect^{m}_{P/Q}\right)^{P/Q} \boxtimes \vect^{-m}_{P/Q}\right)^{P/Q} \\
&\simeq \left(\KL_{f_{\pr}}^{\kk_m}(\g)\boxtimes \vect^{0}_{P/Q} \right)^{P/Q} \\
&\simeq \KL_{f_{\pr}}^{\kappa_m}(\g)
 \end{split}
\end{equation*}
by using \eqref{eq: diag embedding}.
\end{proof}

\end{document}